\DeclareFontFamily{OT1}{pzc}{}
\DeclareFontShape{OT1}{pzc}{m}{it}%
              {<-> s * [1.15] pzcmi7t}{}
\DeclareMathAlphabet{\mathpzc}{OT1}{pzc}{m}{it}
\def\user@resume{resume}
\def\user@intermezzo{intermezzo}
\newcounter{previousequation}
\newcounter{lastsubequation}
\newcounter{savedparentequation}
\renewenvironment{subequations}[1][]{%
      \def\user@decides{#1}%
      \setcounter{previousequation}{\value{equation}}%
      \ifx\user@decides\user@resume
           \setcounter{equation}{\value{savedparentequation}}%
      \else
      \ifx\user@decides\user@intermezzo
           \refstepcounter{equation}%
      \else
           \setcounter{lastsubequation}{0}%
           \refstepcounter{equation}%
      \fi\fi
      \protected@edef\theHparentequation{%
          \@ifundefined {theHequation}\theequation \theHequation}%
      \protected@edef\theparentequation{\theequation}%
      \setcounter{parentequation}{\value{equation}}%
      \ifx\user@decides\user@resume
           \setcounter{equation}{\value{lastsubequation}}%
         \else
           \setcounter{equation}{0}%
      \fi
      \def\theequation  {\theparentequation  \alph{equation}}%
      \def\theHequation {\theHparentequation \alph{equation}}%
      \ignorespaces
}{%
  \ifx\user@decides\user@resume
       \setcounter{lastsubequation}{\value{equation}}%
       \setcounter{equation}{\value{previousequation}}%
  \else
  \ifx\user@decides\user@intermezzo
       \setcounter{equation}{\value{parentequation}}%
  \else
       \setcounter{lastsubequation}{\value{equation}}%
       \setcounter{savedparentequation}{\value{parentequation}}%
       \setcounter{equation}{\value{parentequation}}%
  \fi\fi
  \ignorespacesafterend
}
\newcommand{\mylabel}[2]{#2\def\@currentlabel{#2}\label{#1}}
\newcommand{\proofstep}[1]{%
  \par
  \addvspace{\medskipamount}
  \textit{#1\@addpunct{.}}\enspace\ignorespaces
}
\renewcommand{\d}{\ensuremath{\mathrm{d}}}
\newcommand{\C}{\ensuremath{\mathbb{C}}}
\newcommand{\K}{\ensuremath{\mathbb{K}}}
\newcommand{\N}{\ensuremath{\mathbb{N}}}
\newcommand{\R}{\ensuremath{\mathbb{R}}}
\newcommand{\Z}{\ensuremath{\mathbb{Z}}}
\def\R{\mathbb{R}}
\def\C{\mathbb{C}}
\def\N{\mathbb{N}}
\newcommand{\NN}{\ensuremath{\mathcal N}}
\newcommand{\SSS}{\ensuremath{\mathcal S}}
\newcommand{\TT}{\ensuremath{\mathcal T}}
\newtheorem{theorem}{Theorem}[section]
\newtheorem{proposition}[theorem]{Proposition}
\newtheorem{lemma}[theorem]{Lemma}
\numberwithin{equation}{section}
\newcommand{\Dualpair}[2]{\ensuremath{\left\langle \kern-0.5ex \left\langle #1,#2 \right\rangle \kern-0.5ex \right\rangle}}
\newcommand{\floor}[1]{\ensuremath{\lfloor #1 \rfloor}}
\newcommand{\ceil}[1]{\ensuremath{\lceil #1 \rceil}}
\DeclareMathOperator{\ind}{\ensuremath{\mathbbm{1}}}
\newcommand{\qvar}[2]{\ensuremath{\left\langle \kern-0.5ex \left\langle #1 \right\rangle \kern-0.5ex \right\rangle_{#2}}}
\newcommand{\eps}{\ensuremath{\varepsilon}}
\newcommand{\verti}[1]{\ensuremath{\lvert #1 \rvert}}
\newcommand{\vertii}[1]{\ensuremath{\lVert #1 \rVert}}
\newcommand{\vertiii}[1]{{\verti{\kern-0.25ex\verti{\kern-0.25ex\verti{ #1
    }\kern-0.25ex}\kern-0.25ex}}}
\newcommand{\MMM}{\mathcal{M}}
\newcommand{\alp}{\alpha}
\newcommand{\alpm}{\alpha -1}
\newcommand\blfootnote[1]{%
  \begingroup
  \renewcommand\thefootnote{}\footnote{#1}%
  \addtocounter{footnote}{-1}%
  \endgroup
}
\begin{document}
%
\title[Classical solutions to the thin-film equation with general mobility]{Classical solutions to the thin-film equation with general mobility in the perfect-wetting regime}
\keywords{Lubrication approximation, viscous thin films, traveling waves, maximal regularity, stability}
\subjclass[2020]{35K65,35K25,35R35,76A20,76D08}
\thanks{The first author is indebted to \emph{Dominik John} for fruitful discussions on this problem approximately ten years ago in particular regarding suitable coordinate transformations. The authors thank \emph{Max Sauerbrey} for discussions at an early stage of this project. The first author is grateful to \emph{Michele Precuzzi} and the second author is grateful to \emph{Floris Roodenburg} for discussions. The authors thank \emph{Francisco Carvalho}, \emph{Emiel Lorist}, and \emph{Floris Roodenburg} for careful readings of the manuscript. The authors acknowledge the \emph{Lorentz Center} in Leiden for hosting the workshop \emph{Analysis and numerics of nonlinear PDEs: degeneracies \& free boundaries}, where fruitful discussions related to this project have taken place. Several suggestions of the anonymous reviewers have led to improvements of presentation and content of this revised version. This work is partially based on the second author's master's thesis in applied mathematics prepared under the advice of the first author at \emph{Delft University of Technology}. This publication is part of the project \emph{Codimension two free boundary problems} (with project number \emph{VI.Vidi.223.019} of the research program \emph{ENW Vidi}) which is financed by the \emph{Dutch Research Council} (\emph{NWO})}
\date{\today}
\author{Manuel V. Gnann}
\address
{Delft Institute of Applied Mathematics, Faculty of Electrical Engineering, Mathematics and Computer Science, Delft University of Technology, Mekelweg 4, 2628 CD Delft, Netherlands}
\email[Manuel~V.~Gnann]{M.V.Gnann@tudelft.nl}
\author{Anouk C. Wisse}
\email[Anouk~C.~Wisse]{A.C.Wisse@tudelft.nl}
\begin{abstract}
We prove well-posedness, partial regularity, and stability of the thin-film equation $h_t + (m(h) h_{zzz})_z = 0$ with general mobility $m(h) = h^n$ and mobility exponent $n\in (1,\tfrac{3}{2})\cup (\tfrac{3}{2},3)$ in the regime of perfect wetting (zero contact angle). After a suitable coordinate transformation to fix the free boundary (the contact line where liquid, air, and solid coalesce), the thin-film equation is rewritten as an abstract Cauchy problem and we obtain maximal $L^{p}_t$-regularity for the linearized evolution. Partial regularity close to the free boundary is obtained by studying the elliptic regularity of the spatial part of the linearization. This yields solutions that are non-smooth in the distance to the free boundary, in line with previous findings for source-type self-similar solutions. In a scaling-wise quasi-minimal norm for the initial data, we obtain a well-posedness and asymptotic stability result for perturbations of traveling waves. The novelty of this work lies in the usage of $L^{p}$-estimates in time, where $1 < p < \infty$, while the existing literature mostly deals with $p = 2$ at least for nonlinear mobilities. This turns out to be essential to obtain for the first time a well-posedness result in the perfect-wetting regime for all physical nonlinear slip conditions except for a strongly degenerate case at $n = \tfrac 3 2$ and the well-understood Greenspan-slip case $n = 1$. Furthermore, compared to [J. Differential Equations, 257(1):15-81, 2014] by Giacomelli, the first author of this paper, Kn\"upfer, and Otto, where a PDE approach yields $L^2_t$-estimates, well-posedness, and stability for $1.8384 \approx \tfrac{3}{17}(15-\sqrt{21}) < n < \tfrac{3}{11}(7+\sqrt{5}) \approx 2.5189$, our functional-analytic approach is significantly shorter while at the same time giving a  more general result.
\end{abstract}
\maketitle

\blfootnote{
\includegraphics[height=15mm]{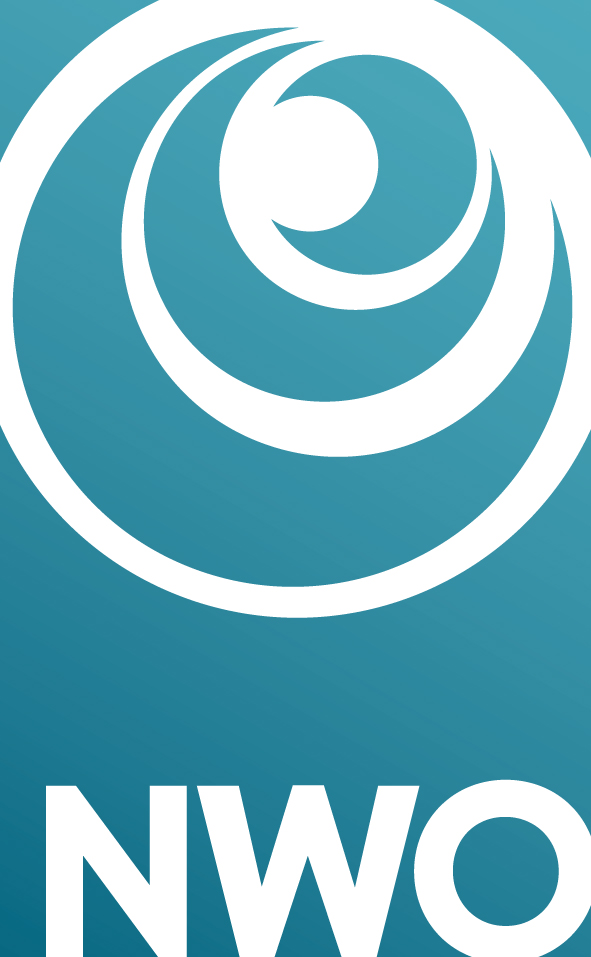}
}

\tableofcontents
%

\section{Introduction}
\subsection{The Thin-Film Equation formulated as a Classical Free-Boundary Problem}
We consider the following free-boundary problem to the thin-film equation
\begin{subequations}\label{TFE}
\begin{align}
    h_t+(h^nh_{zzz})_z &= 0 &&\text{for }t>0,\;z>Z(t),\label{TFEa}\\
    h = h_z &= 0 &&\text{for } t>0,\; z = Z(t),\label{TFEb}\\
    \lim_{z\downarrow Z(t)} h^{n-1}h_{zzz} &= Z_t(t) &&\text{for }t>0,\label{TFEc}
\end{align}
\end{subequations}
describing the time evolution of the height $h(t,z)$ of a viscous thin film on a one-dimensional flat substrate, as visualized in Figure~\ref{fig:thin_film}. Here, the independent variables $t$ and $z$ denote time and lateral position, respectively. The fluid covers the interval $(Z(t),\infty)$, where the free boundary $z = Z(t)$ is called the triple junction or contact line since this is the point where gas, liquid, and solid coalesce. The thin-film equation \eqref{TFEa} can be derived from the Navier-Stokes equations via a lubrication approximation \cite{Bonn2009,Gennes1985,GuentherProkert2008,Oron1997}, and the particular case $n = 1$ can be interpreted either as Greenspan's slip condition \cite{Greenspan1978} or as the lubrication approximation of Darcy's law in the Hele-Shaw cell \cite{GiacomelliOtto2003,KnuepferMasmoudi2013,KnuepferMasmoudi2015,MatiocProkert2012}.
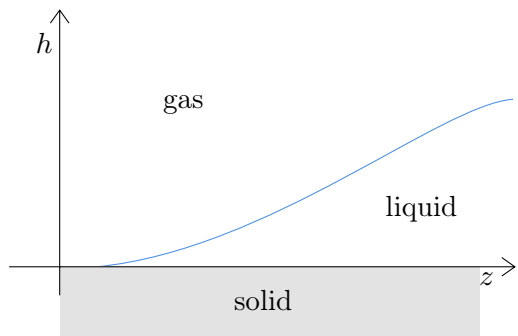
\begin{figure}[htb]
    \centering
\begin{tikzpicture}[x=0.7pt,y=0.7pt,yscale=-1,xscale=1]
\draw   [color={rgb, 255:red, 74; green, 144; blue, 226 }] (97.05,154.63) .. controls (189.5,145.33) and (280.5,67.33) .. (319.5,64.33) ;
\draw  [draw opacity=0][fill={rgb, 255:red, 227; green, 227; blue, 227 }  ,fill opacity=1 ] (77.05,154.63) -- (301.5,154.63) -- (301.5,194.63) -- (77.05,194.63) -- cycle ;
\draw  (50,154.63) -- (320.5,154.63)(77.05,16.33) -- (77.05,170) (313.5,149.63) -- (320.5,154.63) -- (313.5,159.63) (72.05,23.33) -- (77.05,16.33) -- (82.05,23.33)  ;

\draw (169,166) node [anchor=north west][inner sep=0.75pt]   [align=left] {solid};
\draw (250,115) node [anchor=north west][inner sep=0.75pt]   [align=left] {liquid};
\draw (131,61) node [anchor=north west][inner sep=0.75pt]   [align=left] {gas};
\draw (300,156.4) node [anchor=north west][inner sep=0.75pt]    {$z$};
\draw (63,27.4) node [anchor=north west][inner sep=0.75pt]    {$h$};
\end{tikzpicture}
\caption{A viscous thin film as described by \eqref{TFE}.}
\label{fig:thin_film}
\end{figure}

The exponent $n$ in \eqref{TFEa} is called the mobility exponent and takes values in the interval $[1,3]$. The value of this exponent is related to choice of slip, that is, the boundary condition at the liquid-solid interface in the underlying Navier-Stokes system. We will focus on the cases $1 < n < \tfrac{3}{2}$ and $\tfrac{3}{2} < n < 3$. This is because for $n = 3$ (no slip) or $n > 3$, the free boundary of the film cannot move \cite{Huh1971}, while for $n<0$ the propagation speed is infinite and if $0<n<1$ the height of the film can become negative \cite{BW}. The case of linear mobilities $n = 1$ is by now well understood (see details on references below). Our choice of exponents in particular includes the case of linear Navier slip \cite{Bonn2009,Gennes1985,Navier1823,Oron1997}. We exclude $n=\tfrac{3}{2}$ due to resonances leading to logarithmic corrections in this case \cite{Bernis1992}.

\medskip

The boundary condition $h=0$ at $z=Z(t)$ in \eqref{TFEb} determines the position of the contact line, while $h_z=0$ at $z=Z(t)$ entails that the contact angle $\theta$ between the liquid-solid and the gas-liquid interfaces at the triple junction is zero. This implies that the fluid will eventually cover the entire solid. The latter is evident from Young's law \cite{Young1805}, that is,
\begin{equation}\label{Young}
    \gamma_\mathrm{gs}=\gamma_\mathrm{ls}+\cos \theta\gamma_\mathrm{gl},
\end{equation}
where $\gamma_\mathrm{gs}, \gamma_\mathrm{ls}$, and $\gamma_\mathrm{gl}$ are the surface tensions between the gas-solid, liquid-solid, and gas-liquid interfaces, respectively (see Figure~\ref{fig:young's_equation}).

\begin{figure}[htp]
    \centering

\tikzset{every picture/.style={line width=0.75pt}}
\begin{tikzpicture}[x=0.75pt,y=0.75pt,yscale=-1,xscale=1]

\draw  [draw opacity=0][fill={rgb, 255:red, 208; green, 205; blue, 205 }  ,fill opacity=1 ] (69.5,111) -- (325.5,111) -- (325.5,143) -- (69.5,143) -- cycle ;
\draw [color={rgb, 255:red, 74; green, 144; blue, 226 }  ,draw opacity=1 ]   (176.5,111) .. controls (259.5,21) and (333.5,39.33) .. (323.5,36.33) ;
\draw  [draw opacity=0][dash pattern={on 4.5pt off 4.5pt}] (191.46,89.87) .. controls (195.74,90.82) and (199.63,93.69) .. (201.83,98.02) .. controls (203.72,101.75) and (203.99,105.85) .. (202.91,109.48) -- (188.86,104.6) -- cycle ;
\draw[dash pattern={on 2.5pt off 2.5pt},domain=316:360] plot ({175+30*cos(\x)}, {110+30*sin(\x)});
\draw    (69.5,111) -- (325.5,111) ;
\draw [color={rgb, 255:red, 255; green, 0; blue, 0 }  ,draw opacity=1 ][fill={rgb, 255:red, 255; green, 0; blue, 0 }  ,fill opacity=1 ][-stealth]   (176.5,111) -- (235,49.01) ;
\draw [color={rgb, 255:red, 255; green, 0; blue, 0 }  ,draw opacity=1 ][-stealth]    (176.5,111) -- (84.5,111.33) ;
\draw [color={rgb, 255:red, 255; green, 0; blue, 0 }  ,draw opacity=1 ][-stealth]    (176.5,111) -- (231.5,111.32) ;

\draw (241,121) node [anchor=north west][inner sep=0.75pt]   [align=left] {solid};
\draw (280,67) node [anchor=north west][inner sep=0.75pt]   [align=left] {liquid};
\draw (109,44) node [anchor=north west][inner sep=0.75pt]   [align=left] {gas};
\draw (191.5,96.4) node [anchor=north west][inner sep=0.75pt]  [font=\footnotesize]  {${\textstyle \theta }$};
\draw (86,90.4) node [anchor=north west][inner sep=0.75pt]  [font=\small]  {$\gamma _\mathrm{gs}$};
\draw (190,50.4) node [anchor=north west][inner sep=0.75pt]  [font=\small]  {$\gamma _\mathrm{gl}$};
\draw (215,91.4) node [anchor=north west][inner sep=0.75pt]  [font=\small]  {$\gamma _\mathrm{ls}$};

\end{tikzpicture}
\caption{Surface tensions acting at the triple junction.}
    \label{fig:young's_equation}
\end{figure}
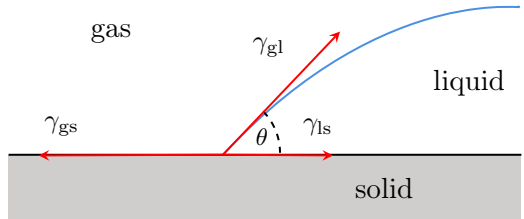

When $\gamma_\mathrm{gs}< \gamma_\mathrm{ls}+\gamma_\mathrm{gl}$, $\theta$ has to be strictly positive, an equilibrium can be obtained, and we are in the regime of partial wetting. If, on the other hand, $\gamma_\mathrm{gs} \ge \gamma_\mathrm{ls}+\gamma_\mathrm{gl}$, we need to have $\theta = 0$ and, at least for $\gamma_\mathrm{gs} > \gamma_\mathrm{ls}+\gamma_\mathrm{gl}$, the fluid film will not stop spreading. Hence, we are in the complete-wetting regime considered in this paper.

\medskip

In \cite{GGKO} it is shown that for $n=2$, under a smallness condition for the initial value for a suitably transformed version of \eqref{TFE}, the resulting problem has a unique classical solution. Furthermore,
in \cite[Remark~3.4]{GGKO} it is explained how the result generalizes immediately to the range
\begin{equation}\label{mobility-range-1}
1.8384 \approx \tfrac{3}{17}(15-\sqrt{21}) < n < \tfrac{3}{11}(7+\sqrt{5}) \approx 2.5189.
\end{equation}
In this paper, we generalize this result to cover the more natural and except for isolated points full range
\begin{equation}\label{mobility-range-2}
1 < n < \tfrac 3 2 \qquad \text{and} \qquad \tfrac 3 2 < n < 3
\end{equation}
(in particular up to Darcy dynamics, $n = 1$, and the no-slip case $n = 3$), by employing a functional-analytic approach relying on semigroup theory. Compared to the methods in \cite{GGKO}, in which techniques from ordinary differential equations (ODEs) and a time-discretization procedure were used, this significantly shortens the arguments. A further benefit of our approach is that it immediately entails maximal $L^p$-regularity in time, with $1<p<\infty$. This is non-obvious from the methods in \cite{GGKO} and turns out to be the crucial ingredient to obtain the larger range \eqref{mobility-range-2} of mobility exponents $n$ compared to \eqref{mobility-range-1}, for which up to now no well-posedness results have been available. In the spatial variables still weighted Hilbert-Sobolev spaces are used, as is also the case for most of the existing well-posedness results \cite{BringmannGiacomelliKnuepferOtto2016,Esselborn2016,GiacomelliKnuepferOtto2008,GGKO,Gnann2015,G2016,GnannPetrache,Knuepfer2011,Knuepfer2015} except for \cite{Degtyarev2017,GiacomelliKnuepfer2010,John2015,Seis2018}. We mention that a corresponding theory of weak solutions without uniqueness results in complete wetting has been developed in \cite{BerettaBertschDalPasso1995,BernisFriedman1990,BertozziPugh1996,BertschDalPassoGarckeGruen1998,Gruen2004a} while qualitative properties have been investigated in \cite{Bernis1996a,Bernis1996b,DalPassoGiacomelliGruen2001,DalPassoGiacomelliShishkov2001,DeNittiFischer2022,Fischer2013,Fischer2014,Fischer2016,GiacomelliGruen2006,GiacomelliShishkov2005,Gruen2003,Gruen2004b,HulshofShishkov1998}. Results for partial-wetting boundary conditions are so far contained in \cite{BertschGiacomelliKarali2005,Degtyarev2017,Esselborn2016,Knuepfer2011,Knuepfer2015,Knuepfer2023,KnuepferMasmoudi2013,KnuepferMasmoudi2015,MajdoubMasmoudiTayachi2021,Mellet2015,Otto1998}.

\subsection{Notation}
We write $a\lesssim_{P}b$ if there exists a constant $0< C < \infty$ only depending on the parameters in the set $P$ and $n$ such that $a\leq Cb$. Similarly, we write $a\sim_P b$ if both $a\lesssim_P b$ and $b \lesssim_P a$. If $P = \emptyset$ or $P = \{n\}$, or if the dependence is specified in the text, the subscript $P$ is omitted. Consistently, we will not specify the dependence of constants on the parameter $n$ throughout the paper.

\medskip
We define $\mathbbm{1}_{A}(x) = 1$ for $x\in A$ and $\mathbbm{1}_{A}(x) = 0$ else, where $A$ is some set.

\medskip

For $\alpha \in \R$ we write $\floor\alpha := \max\{m \in \Z \colon m \le \alpha\}$ and $\ceil\alpha := \min\{m \in \Z \colon m \ge \alpha\}$.

\medskip

We denote by $\mathcal{L}(X)$ the bounded linear operators $X \to X$.

\medskip

For a domain $\Omega \subseteq \R^d$ and $1\leq p \le \infty$, $L^p(\Omega)$ denotes the Lebesgue space of $p$-integrable functions $\Omega \to \R$ and we write $L^p(\Omega;\C)$ for their complex-valued analogue. The convention for Sobolev spaces $H^k(\Omega)$, $W^{k,p}(\Omega)$, and Besov spaces $B^s_{p,q}(\Omega)$ is analogous. In case of Bochner spaces, we write $L^p(\Omega;X)$ etc.

\medskip

For a domain $\Omega \subseteq \R^d$ we write $C_\mathrm{c}^\infty(\Omega)$ for the space of real-valued test functions, that is, functions $\Omega \to \R$ that are infinitely often differentiable and have compact support in $\Omega$. We denote by $C_\mathrm{c}^\infty(\Omega;\C)$ their complex-valued analogue.

\medskip

We denote by $\mathcal{S}(\R)$ the complex-valued Schwartz space and use the normalizations
\begin{align*}
    (\mathcal{F}f)(\xi) &:= (2\pi)^{-\tfrac{1}{2}}\int_{\R}f(x)e^{-ix\xi} \, \d x\quad\text{ for } \xi\in\R \text{ and } f \in \SSS(\R), \\
     (\mathcal{F}^{-1}f)(x) &= (2\pi)^{-\tfrac{1}{2}}\int_{\R}f(\xi)e^{ix\xi} \, \d\xi\quad\text{ for } x\in \R \text{ and } f \in \SSS(\R),
\end{align*}
for the Fourier transform and its inverse.

\subsection{Outline}
The rest of the paper consists of the following parts:

\medskip

In \S\ref{section_3} the free-boundary problem \eqref{TFE} is transformed onto a fixed domain (cf.~\S\ref{sec_derivation_Cauchy}), the functional-analytic setting is introduced (cf.~\S\ref{sec_Functional-Analytic_setting}), and the main result, Theorem~\ref{Main_thm}, is formulated and discussed (cf.~\S\ref{sec_main_result}).

\medskip

\S\ref{section_4} treats the maximal regularity for the linear Cauchy problem. This is divided in the following subsections: in \S\ref{sec_inhomogeneous_equation} the inhomogeneous equation of the abstract Cauchy problem is treated. Then, in \S\ref{sec_homogeneous_equation} the homogeneous equation of the abstract Cauchy problem is solved. \S\ref{sec_Linear_Maximal_Reg_est} and \S\ref{sec_higher_regularity} treat parabolic maximal $L^p$-regularity in time and higher regularity in the spatial variables, respectively.

\medskip

In \S\ref{sec-nonlinear} the nonlinear problem is treated. This is split into suitable embeddings and nonlinear estimates in \S\ref{ssec:non-est} and the proof of the main result in \S\ref{sec_proof_main_result}. 

\medskip

The paper ends with concluding remarks on coercivity in \S\ref{sec-conclusion}.

\section{Setting and Main Result}\label{section_3}
\subsection{Derivation of the Nonlinear Cauchy Problem}\label{sec_derivation_Cauchy}
Here, we reformulate the free-boundary problem \eqref{TFE} as a nonlinear Cauchy problem using the von Mises transform and additional coordinate transformations for the range of mobility exponents $n$ as in \eqref{mobility-range-2}.

\subsubsection{Reformulation for \texorpdfstring{$n\in(1,\tfrac{3}{2})$}{n in (1, 3/2)}}\label{sssec:0n32}
In this case, the generic solution of the free-boundary problem has up to rescaling and translation the form $h = z_+^2$. Note that a stationary solution being generic is not a contradiction to complete-wetting boundary conditions since we are only interested in the behavior close to the contact line $Z(t)$ and since for $h = z_+^2$ it is the asymptotic as $z \to \infty$ that keeps the contact line eventually at rest, see \cite{BringmannGiacomelliKnuepferOtto2016,GiacomelliKnuepferOtto2008,John2015} for the case $n = 1$. For compactly supported droplets in complete wetting, however, the fluid film will indeed not stop spreading, see e.g. \cite{CarlenUlusoy2007,CarlenUlusoy2014,CarrilloToscani2002,Gnann2015,MatthesMcCannSavare2009,Seis2018,SeisWinkler2024} for rigorous results if $n = 1$. See \cite{BelgacemGnannKuehn2016,Bernis1992} for special solutions if $n \in (1,\frac 3 2)$, where close to the contact line the leading-order behavior is the same as for $h = z_+^2$. This stands in contrast to the partial-wetting regime, where for compactly supported solutions an inverted parabola is the stationary long-time asymptotic, see \cite{MajdoubMasmoudiTayachi2021}.  Considering the case of compactly supported droplets for general mobilities would introduce another length scale and would make special solutions non-explicit. This would greatly complicate the analysis on coercivity of the linearized dynamics, for which we find quite explicit characterizations below in Lemma~\ref{lem_coercive_range}.

\medskip

We linearize around this profile using the von Mises transform, that is, we introduce the new dependent variable $Z = Z(t,y)$ through
 \begin{equation}\label{eq_transform_1<n<3/2}
    h(t,Z(t,y)) = y^2 \quad \text{for }t,y>0.
\end{equation}
Note that the profile $y^2$ is strictly monotone for $y>0$ so that $Z = Z(t,y)$ is well defined through the implicit function theorem provided $h(t,\cdot)$ is strictly increasing for any $t \ge 0$ fixed. Differentiating equation \eqref{eq_transform_1<n<3/2} with respect to $t$  gives by the chain rule
\begin{equation}\label{eq_transform_diff}
    h_t +h_z Z_t = 0 \stackrel{\eqref{TFEa}}{\iff} h_z Z_t -(h^n h_{zzz})_z= 0 \quad \text{for }t,y>0.
\end{equation}
On the other hand, differentiating $h(t,Z(t,y))$ with respect to $y$ using $z = Z(t,y)$, we see that $h_y =h_z  Z_y$ and 
\begin{equation}\label{eq_derivative}
    \partial_z = Z_y^{-1}\partial_y.
\end{equation}
Using \eqref{eq_transform_1<n<3/2} and \eqref{eq_derivative} in \eqref{eq_transform_diff} we deduce (omitting parentheses here and in what follows, that is, differential operators act on everything to their right)
\begin{equation*}
    2 y Z_y^{-1} Z_t - Z_y^{-1} \partial_y y^{2n} Z_y^{-1} \partial_y Z_y^{-1} \partial_y Z_y^{-1} 2y = 0 \quad \text{for } t,y>0,
\end{equation*}
which is equivalent to
\begin{equation}\label{eq_equivalent}
    Z_t - y^{-1} \partial_y y^{2n} Z_y^{-1} \partial_y Z_y^{-1} \partial_y Z_y^{-1} y = 0\quad \text{for } t, y>0.
\end{equation}
We now introduce the new variable
\begin{equation}\label{def-h-1n32}
H:= Z_y^{-1}
\end{equation}
and note that $Z_{yt} = -H^{-2}H_t$. Note that in the new variables, the quadratic profile $h=z^2$ corresponds to $H=1$. Using the definition of $H$ and differentiating \eqref{eq_equivalent} with respect to $y$, we get
\begin{equation*}
    H_t + H^2\partial_y y^{-1} \partial_y y^{2n}H \partial_y H \partial_y H y = 0 \quad t,y>0,
\end{equation*}
which, on using the commutation relation
\begin{equation*}
y \partial_y y^\gamma = y^\gamma (y \partial_y + \gamma) \quad \text{for } \gamma \in \R,
\end{equation*}
can be rewritten as
\begin{equation}\label{eq_PDE_H_2}
   H_t + y^{2n-4} H^2(y\partial_y+2n-3)(y\partial_y+2n-1)H y\partial_y H(y\partial_y +1)H=0 \quad\text{for }t,y>0.
\end{equation}
We apply a further change of variables by setting
\begin{equation}\label{def-x-1n32}
x:=\tfrac{y^{4-2n}}{(4-2n)^4},
\end{equation}
so that $y\partial_y = (4-2n)D$ with $D := x\partial_x$. Rewriting \eqref{eq_PDE_H_2} accordingly gives
\begin{equation*}
    H_t + x^{-1} \mathcal{M}_n(H,H,H,H,H) = 0,
\end{equation*}
where
\begin{equation*}
    \mathcal{M}_n(H_1,H_2,H_3,H_4,H_5) = H_1 H_2\big(D+\tfrac{2n-3}{4-2n}\big)\big(D+\tfrac{2n-1}{4-2n}\big)H_3DH_4\big(D+\tfrac{1}{4-2n}\big)H_5.
\end{equation*}
Linearizing around the quadratic profile
\begin{equation}\label{def-u-1n32}
u:=H-1
\end{equation}
gives the nonlinear Cauchy problem
\begin{subequations}\label{eq_CP_n_1_3_2}
\begin{alignat}{2}
    u_t+x^{-1}\mathfrak p_n(D)u &= \mathcal{N}_n(u)\qquad  &&\text{for } t,x>0,\\
    u &=u^{(0)} \qquad&&\text{for } x>0 \text{ and at } t = 0,
\end{alignat}
\end{subequations}
with the linear operator
\begin{align}\nonumber
    \mathfrak p_n(D)u &= \mathcal{M}_n(u,1,\dots,1)+\dots +\mathcal{M}_n(1,\dots,1,u)\\
    &= D\big(D-\tfrac{3-2n}{4-2n}\big)\big(D-\tfrac{1-2n}{4-2n}\big)\big(D-\tfrac{-1}{2-n}\big)u \label{eq_def_p(D)_1<n<3/2}
\end{align}
and the nonlinearity
\begin{equation}\label{eq_nonlin_1<n<3/2}
    \mathcal{N}_n(u) = -x^{-1}\mathcal{M}_n(u+1,\dots,u+1) + x^{-1}\mathfrak p_n(D)u.
\end{equation}
Hence, $\mathfrak p_n(\zeta) = \prod_{j = 1}^4 (\zeta-\gamma_j)$ is a fourth order polynomial with roots in increasing order given by
\begin{equation*}
\gamma_1 := -\tfrac{1}{2-n},\quad \gamma_2:=\tfrac{1-2n}{4-2n},\quad\gamma_3:=0,\quad \gamma_4:=\tfrac{3-2n}{4-2n}.
\end{equation*}
Finally, we note that that we do not have to impose boundary conditions on the Cauchy problem \eqref{eq_CP_n_1_3_2} since the boundary conditions \eqref{TFEb} and \eqref{TFEc} are implicitly fulfilled through \eqref{eq_transform_1<n<3/2} provided $\sup_{t,x > 0} |u|$ is sufficiently small, which through \eqref{def-h-1n32} and \eqref{def-u-1n32} in particular entails that $Z$ is a small Lipschitz perturbation of $Z = y + \mathrm{const.}$ Smallness of $\sup_{t,x > 0} |u|$ is a natural assumption to ensure that the von Mises transform \eqref{eq_transform_1<n<3/2} is diffeomorphic. Also observe that otherwise $Z_x \stackrel{\eqref{def-h-1n32},\eqref{def-u-1n32}}{=} 1/(1+u)$ can become unbounded.

\subsubsection{Reformulation for \texorpdfstring{$n\in(\tfrac{3}{2},3)$}{n in (3/2,3)}}\label{sssec:32n3}
For mobility exponents $n\in(\tfrac{3}{2},3)$ the generic solution of the free-boundary problem \eqref{TFE} is a travelling wave $h(t,z) = h_{\text{TW}}(x)$, where $x=z-Vt$, with $Z_t(t)=V<0$ the constant velocity of the fluid film. This change of coordinates implies
\[
    \partial_z h=\tfrac{\d h_{\text{TW}}}{\d x} \quad \text{ and }\quad\partial_t h = -V \tfrac{\d h_{\text{TW}}}{\d x},
\]
which turns \eqref{TFE} into the ordinary boundary-value problem
\begin{subequations}
\begin{alignat}{2}
    -V\tfrac{\d h_{\text{TW}}}{\d x} +\tfrac{\d}{\d x} \big(h_{\text{TW}}^n \tfrac{\d^3 h_{\text{TW}}}{\d x^3}\big)&=0\qquad && \text{for } x>0, \label{ode-tw}\\
    h_{\text{TW}}=\tfrac{\d h_{\text{TW}}}{\d x}&=0\qquad&& \text{at } x=0, \label{tw-bc1}\\
    h_{\text{TW}}^{n-1}\tfrac{\d^3 h_{\text{TW}}}{\d x^3}&=-V\qquad&& \text{at } x=0, \label{tw-bc2}
\end{alignat}
\end{subequations}
where we have assumed $Z(0)=0$ by translation invariance. Integrating the ODE \eqref{ode-tw} and appealing to the boundary conditions \eqref{tw-bc1} and \eqref{tw-bc2} gives
\begin{subequations}\label{eq_ODE_TW}
\begin{alignat}{2}
h_{\text{TW}}^{n-1}\tfrac{\d^3 h_{\text{TW}}}{\d x^3}&=V\qquad && \text{for } x>0,\\
h_{\text{TW}}=\tfrac{\d h_{\text{TW}}}{\d x}&=0\qquad&& \text{at } x=0.
\end{alignat}
\end{subequations}
By a rescaling of $x$ we may assume without loss of generality that the velocity $V$ of the traveling wave only depends on $n$. In particular, we may assume that this velocity is $V= - \frac{3}{n}(\frac{3}{n}-1)(2 - \frac{3}{n})$. This particular choice of $V$ ensures that
\begin{equation}\label{sol-tw}
h_{\text{TW}}=x^{\frac3n}
\end{equation}
solves \eqref{eq_ODE_TW}. Note that this choice reduces for linear Navier slip, i.e., $n=2$, to the velocity $V=-\tfrac{3}{8}$ of the travelling wave, see e.g. \cite{GGKO}.

\medskip

The next step is to study perturbations of solutions to \eqref{TFE} around the traveling wave $x^{\frac{3}{n}}$ with the von Mises transform
\begin{equation}\label{eq_transform_>3/n}
    h(t,Z(t,x)):=x^{\frac{3}{n}},
\end{equation}
which is carried out in \cite[\S1.3]{GGKO} for $n=2$. From \eqref{eq_transform_>3/n} we infer
\begin{equation*}
    h_z Z_x= \partial_x h = \tfrac{3}{n}x^{\frac{3}{n}-1}\quad\text{ and }\quad
    h_t + h_z Z_t = 0.
\end{equation*}
Using this in \eqref{eq_transform_>3/n} together with \eqref{TFEa}, we obtain
\begin{equation}\label{zt-32n3}
    Z_t = \tfrac{n}{3}x^{1-\frac{3}{n}}\partial_x x^3 Z_x^{-1}\partial_x Z_x^{-1} \partial_x Z_x^{-1} \partial_x x^{\frac{3}{n}}.
\end{equation}
We again introduce the variable
\begin{equation}\label{def-h-32n3}
H:= Z_x^{-1}
\end{equation}
and thus $\partial_t H = -H^{2}Z_{xt}$. Note that the constant solution $H = H_\mathrm{TW} = 1$ corresponds to the traveling-wave profile $h_{\mathrm{TW}}$. Writing $D=x\partial_x$ and using the commutation relation
\begin{equation*}
Dx^{\gamma}=x^{\gamma}(D+\gamma) \quad \text{for } \gamma \in \R,
\end{equation*}
we obtain
\begin{equation*}
    \partial_t H + x^{-1}\mathcal{M}_n(H,H,H,H,H)=0,
\end{equation*}
where
\begin{equation*}
    \mathcal{M}_n(H_1, H_2, H_3, H_4, H_5) := H_1H_2D(D+\tfrac{3}{n})H_3(D+\tfrac{3}{n}-2)H_4(D+\tfrac{3}{n}-1)H_5.
\end{equation*}
Linearizing around
\begin{equation}\label{def-u-32n3}
u:=H-1
\end{equation}
gives the nonlinear Cauchy problem
\begin{subequations}\label{eq_CP_n_3_2_3}
\begin{alignat}{2}
    u_t+x^{-1}\mathfrak p_n(D)u &= \mathcal{N}_n(u)\qquad  &&\text{for } t,x>0,\\
    u &=u^{(0)} \qquad&&\text{for } x>0 \text{ and at } t = 0,
\end{alignat}
\end{subequations}
with the linear operator
\begin{align}\nonumber
    \mathfrak p_n(D)u &:= \mathcal{M}_n(u,1,\dots,1)+\dots +\mathcal{M}_n(1,\dots,1,u)\\
    &= D (D + \tfrac 3 n) \big(D^2 + (\tfrac 9 n - 4) D - 3 (2-\tfrac 3 n) (\tfrac 3 n-1)\big) u \nonumber \\
    &= D\big(D+\tfrac{3}{n}\big)\left(D-\omega_1\right)\left(D-\omega_2\right)u, \label{eq_def_p(D)_3/2<n<3}
\end{align}
where
\begin{equation*}
\omega_1:= \tfrac{4n-9-\sqrt{-27+36n-8n^2}}{2n} \quad\text{ and }\quad  \omega_2:=\tfrac{4n-9+\sqrt{-27+36n-8n^2}}{2n},
\end{equation*}
and the nonlinearity
\begin{equation}\label{eq_nonlin_3/2<n<3}
    \mathcal{N}_n(u) = -x^{-1}\mathcal{M}_n(u+1,\dots,u+1) + x^{-1}\mathfrak p_n(D)u.
\end{equation}
Hence, $\mathfrak p_n(\zeta)$ is a fourth-order polynomial and for $n\in (\tfrac{3}{2},3)$ the zeros are ordered as follows (from small to large)
\begin{equation*}
\gamma_1:=-\tfrac{3}{n},\quad \gamma_2:=\omega_1,\quad\gamma_3:=0,\quad \gamma_4:=\omega_2.
\end{equation*}
Again, we note that \eqref{eq_CP_n_3_2_3} does not require boundary conditions as these are implicitly fulfilled through \eqref{eq_transform_>3/n} provided $\sup_{t,x > 0} |u|$ is sufficiently small, which through \eqref{def-h-32n3} and \eqref{def-u-32n3} in particular entails that $Z$ is a small Lipschitz perturbation of $Z = y + \mathrm{const.}$ Smallness of $\sup_{t,x > 0} |u|$ ensures that the von Mises transform \eqref{sol-tw} is diffeomorphic. Further consider that otherwise $Z_x \stackrel{\eqref{def-h-32n3},\eqref{def-u-32n3}}{=} 1/(1+u)$ can become unbounded.

\subsubsection{The Nonlinear Cauchy Problem}\label{sec_Non_Cauchy_Problem}
Both \eqref{eq_CP_n_1_3_2} and \eqref{eq_CP_n_3_2_3} result in the Cauchy problem
\begin{subequations}\label{eq_nonlin_cauchy}
\begin{alignat}{2}
    u_t+x^{-1}\mathfrak p(D)u &= \mathcal{N}(u)\qquad  &&t,x>0,\\
    u|_{t=0} &=u^{(0)} \qquad&&x>0,
\end{alignat}
\end{subequations}
where
\begin{equation}\label{eq_def_p(D)}
    \mathfrak p(D):=\mathfrak p_n(D)=(D-\gamma_1)(D-\gamma_2)(D-\gamma_3)(D-\gamma_4)
\end{equation}
with
\begin{subequations}\label{eq_zeros_12}
\begin{equation}\label{eq_zeros_1}
\gamma_1 = - \tfrac{1}{2-n},\quad \gamma_2=\tfrac{1-2n}{2(2-n)},\quad\gamma_3=0,\quad \gamma_4=\tfrac{3-2n}{2(2-n)} =: \beta \quad \text{for } n\in (1,\tfrac{3}{2}),
\end{equation}
and
\begin{equation}\label{eq_zeros_2}
\gamma_1=-\tfrac{3}{n},\;\; \gamma_2= \tfrac{4n-9-\sqrt{-27+36n-8n^2}}{2n},\;\;\gamma_3=0,\;\; \gamma_4=\tfrac{4n-9+\sqrt{-27+36n-8n^2}}{2n} =: \beta \quad \text{for } n\in (\tfrac{3}{2},3).
\end{equation}
\end{subequations}
The fact that in both cases $n\in(1,\tfrac{3}{2})$ and $n\in(\tfrac{3}{2},3)$ we have the root $\gamma_3 = 0$ is in line with \eqref{TFEa} being in divergence form.
The nonlinear right-hand side is given by
\begin{equation}\label{eq_def_nonlin}
    \mathcal{N}(u):=\mathcal{N}_n(u) = -x^{-1}\mathcal{M}_n(u+1,\dots,u+1) + x^{-1}\mathfrak p_n(D)u,
\end{equation}
where
\begin{align}\label{eq_def_M}
    &\mathcal{M}_n(H_1, H_2, H_3, H_4, H_5)=\\&\quad\begin{cases}H_1 H_2\big(D+\tfrac{2n-3}{4-2n}\big)\big(D+\tfrac{2n-1}{4-2n}\big)H_3DH_4\big(D+\tfrac{1}{4-2n}\big)H_5\;\; &\text{for }n\in(1, \tfrac{3}{2}).\\
    H_1H_2D\big(D+\tfrac{3}{n}\big)H_3\big(D+\tfrac{3}{n}-2\big)H_4\big(D+\tfrac{3}{n}-1\big)H_5\;\; &\text{for }n\in(\tfrac{3}{2},3).
\end{cases}\nonumber
\end{align}
In what follows, the subscripts $n$ will usually be omitted.

\pagebreak
%

\subsection{Norms and Spaces}\label{sec_Functional-Analytic_setting}
\subsubsection{Weighted Sobolev Spaces}
For $k\in \N_0$ and $\alpha\in\R$ we use the weighted inner products
\begin{equation}\label{eq_inner}
    (\phi, \psi)_{k,\alpha} = \sum _{j=0}^k\int_0^{\infty}x^{-2\alpha}D^j\phi \overline{D^j\psi} \tfrac{\d x}{x}, \quad  (\phi, \psi)_{\alpha} :=  (\phi, \psi)_{0,\alpha},
\end{equation}
where $\phi,\psi \in C_\mathrm{c}^{\infty}((0,\infty);\K)$ with $\K \in \{\R,\C\}$. We write
$\verti{\cdot}_{k,\alpha}$ and $\verti{\cdot}_\alpha := \verti{\cdot}_{0,\alpha}$, where $\verti{\phi}_{k,\alpha} := \sqrt{(\phi,\phi)_{k,\alpha}}$, for the induced norms. The space $H_{k,\alpha}^\K$ is defined as the closure of $C_\mathrm{c}^{\infty}((0,\infty);\K)$ with respect to $\verti{\cdot}_{k,\alpha}$. For $k \ge 1$ we define inductively $H_{-k,\alpha-1}^\K$ as the dual of $H_{-k+4,\alpha}^\K$ relative to $H_{-k+2,\alpha-\frac 1 2}^\K$.
We write $H_\alpha^\K := H_{0,\alpha}^\K$ and
\begin{equation}\label{script-hka}
\mathscr{H}_{k,\alpha} := H_{k,\alpha - \frac{1}{2}}^\K \cap H_{k+2,\alp}^\K.
\end{equation}
For $\alpha = 0$ we obtain an isometry with the standard Sobolev spaces $H_{k,0}^\K \simeq H^k(\R;\K) = W^{k,2}(\R;\K)$ on mapping $u \mapsto (s \mapsto u(e^s))$, and for $k = 0$ and $\alpha =0$ it holds $H_0^\K \simeq L^2(\R;\K)$. We write $H_{k,\alpha} := H_{k,\alpha}^\R$ and $H_\alpha := H_{0,\alpha}$, where $k \in \Z$ and $\alpha \in \R$ in the real-valued case.

\subsubsection{Interpolation Spaces}
For $k \in \N_0$, $\alpha \in \R$, $\vartheta \in (0,1)$, and $1 < p < \infty$, we introduce the real interpolation space
\begin{equation}\label{def-trace-space}
H_{k+2-4\vartheta,\alpha - \vartheta, p} := (H_{k-2,\alpm},H_{k+2,\alp})_{1-\vartheta,p}
\end{equation}
(cf.~\cite[Def.~1.2.2]{Lunardi}) with norm
\begin{equation}\label{def_k,alpha,p_norm}
    \verti{u}_{k+2-4\vartheta,\alpha - \vartheta, p}:= \verti{u}_{(H_{k-2,\alpm},H_{k+2,\alp})_{1-\vartheta,p}},
\end{equation}
where we use the $K$-method of real interpolation in what follows (cf.~\cite[\S1.2.1]{Lunardi}). We define for $k > 0$, $k \notin \N$, and $\alpha \in \R$ the fractional weighted Sobolev space
\begin{equation}\label{sobolev-frac}
H_{k,\alpha} := (H_{\floor k, \alpha},H_{\floor k +4,\alpha})_{\frac{k-\floor k}{4},2}
\end{equation}
with induced norm $\verti{\cdot}_{k,\alpha}$.

\medskip

Note that the real interpolation of weighted $L^p$-spaces with respect to $(\cdot,\cdot)_{\theta,q}$ in the non-diagonal case $p \ne q$ is a delicate matter of interpolation of operators, see e.g.~\cite{Gilbert1972}. We have the following characterization and embeddings of the interpolation norm in terms of fractional Sobolev spaces that can be deduced by direct elementary arguments:
\begin{lemma}\label{lemma_char_interpol}
For $k \in \N$ with $k \ge 2$, $\alpha \in \R$, $\vartheta \in (0,1)$, and $1 < p < \infty$ it holds
\begin{equation}\label{eq_char_interpol}
    \verti{u}_{k + 2-4\vartheta,\alpha - \vartheta,p} \gtrsim_{k,\alpha,\vartheta,p} \begin{cases} \big\lVert s \mapsto e^{-(\alpha-\vartheta) s} u(e^s)\big\rVert_{W^{k+2-4\vartheta,2}(\R)} & \text{for } p \le 2, \\
     \big\lVert s \mapsto e^{-(\alpha-\vartheta) s} u(e^s)\big\rVert_{W^{k + \frac 3 2 + \frac 1 p - 4 \vartheta,p}(\R)} & \text{for } p > 2, \end{cases} \quad u \in H_{k+2-4\vartheta,\alpha-\vartheta,p},
\end{equation}
where equivalence holds for $p = 2$, that is, we have $H_{k+2-4\vartheta,\alpha-\vartheta,2} = H_{k+2-4\vartheta,\alpha-\vartheta}$ (equivalence of norms). Additionally, we have
\begin{equation}\label{eq_lower_bound_interpol}
      \verti{u}_{k+2-4\vartheta,\alpha-\vartheta,p} \gtrsim_{k,\alpha,\vartheta, p} \big\lVert s \mapsto e^{-(\alpha-\vartheta) s} u(e^s)\big\rVert_{W^{m,\infty}(\R)} \quad \text{for } u \in H_{k+2-4\vartheta,\alpha-\vartheta,p},
\end{equation}
where $m = k+\frac 3 2-4\vartheta$ if $k+\frac 3 2-4\vartheta \notin \Z$ and $m < k+\frac 3 2-4\vartheta$ else. Furthermore, for $\eta(x) := x^\kappa \mathbbm{1}_{[0,1]}(x) + x^{-\kappa} (1-\mathbbm{1}_{[0,1]}(x))$, where $\kappa > 0$, we have for $p > 2$
\begin{equation}\label{lower_bound_l2eta}
\verti{u}_{k+2-4\vartheta,\alpha-\vartheta,p}^2 \gtrsim_{k,\kappa,\alpha,\vartheta,p} \sum_{j = 0}^m \int_0^\infty \eta^2 x^{-2(\alpha-\vartheta)} (D^j u)^2 \tfrac{\d x}{x}
\end{equation}
for $m := \floor{k + \tfrac 3 2 + \tfrac 1 p - 4 \vartheta}$, $u \in H_{k+2-4\vartheta,\alpha-\vartheta,p}$.
\end{lemma}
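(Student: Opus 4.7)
The plan is to reduce the weighted spaces on $(0,\infty)$ to (weighted) Sobolev spaces on $\R$ via the logarithmic substitution, then apply real interpolation of Sobolev scales together with Besov embeddings. Set $U(s) := u(e^s)$ and $V(s) := e^{-(\alpha-\frac{1}{p})s}U(s)$; using $D = x\partial_x$ and the commutation
\begin{equation*}
\partial_s^j U(s) = e^{(\alpha-\frac{1}{p})s}\bigl(\partial_s + \alpha-\tfrac{1}{p}\bigr)^j V(s),
\end{equation*}
the change of variables $x = e^s$ in the integrals defining $\verti{\cdot}_{k,\beta}$ yields, after binomial expansion and absorption of the constant-coefficient terms,
\begin{equation*}
\verti{u}_{k,\beta}^2 \sim_{k,\alpha,p} \sum_{j=0}^{k} \int_\R e^{2(\alpha-\frac{1}{p}-\beta)s}(\partial_s^j V)^2\,\d s.
\end{equation*}
For the two base spaces $\beta = \alpha-1$ and $\beta = \alpha$ this produces weighted $H^{k\mp 2}(\R)$-norms with dual exponential weights $e^{2s/p'}$ and $e^{-2s/p}$, whose interpolation-geometric-mean weight at parameter $\theta = 1-\frac{1}{p}$ equals $(1-\theta)(2s/p') + \theta(-2s/p) = 0$, i.e.~cancels exactly.

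Applying real interpolation to this weighted $L^2$-Sobolev scale (with the $K$-method or, equivalently, a Littlewood--Paley decomposition of $V$ in the $s$-variable) then identifies $(H_{k-2,\alpha-1},H_{k+2,\alpha})_{1-\frac{1}{p},p}$ with the \emph{unweighted} Besov space $B^{k+2-\frac{4}{p}}_{2,p}(\R)$ in the $V$-variable. For $p=2$ this equals $H^{k}(\R)$, which after reverting the substitutions is exactly $H_{k,\alpha-\frac{1}{2}}$, yielding the asserted equivalence. For general $p$ one applies standard Besov embeddings: if $p\le 2$, then $B^s_{2,p}(\R) \hookrightarrow B^s_{2,2}(\R) = W^{s,2}(\R)$ by monotonicity in the second Besov index (from $\ell^p \hookrightarrow \ell^2$); if $p > 2$, the one-dimensional Sobolev--Besov embedding gives $B^s_{2,p}(\R) \hookrightarrow B^{s-(\frac{1}{2}-\frac{1}{p})}_{p,p}(\R) = W^{s-(\frac{1}{2}-\frac{1}{p}),p}(\R)$, which with $s = k+2-\frac{4}{p}$ produces regularity $k+\frac{3}{2}-\frac{3}{p}$. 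Translating back from $V$ to $u$ yields \eqref{eq_char_interpol}.

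The bound \eqref{eq_lower_bound_interpol} follows from \eqref{eq_char_interpol} by the classical one-dimensional Sobolev embedding $W^{s,p}(\R) \hookrightarrow W^{m,\infty}(\R)$ whenever $s - \frac{1}{p} > m$ (with the strict-vs.-sharp case distinction producing the stated condition on $m$), noting that the H\"older exponent $s - \frac{1}{p}$ equals $k+\frac{3}{2}-\frac{4}{p}$ in both regimes $p\le 2$ and $p>2$. For \eqref{lower_bound_l2eta}, the integral $\int_\R \eta(e^s)^2\,\d s = \frac{1}{\kappa}$ is finite, so pulling the $L^\infty$-norm of $(\partial_s + \alpha-\tfrac{1}{p})^j V$ out of the $\eta^2$-weighted integrand yields
\begin{equation*}
\int_0^\infty \eta^2 x^{-2(\alpha-\frac{1}{p})}(D^j u)^2\,\tfrac{\d x}{x} = \int_\R \eta(e^s)^2 \bigl[(\partial_s + \alpha-\tfrac{1}{p})^j V\bigr]^2\,\d s \lesssim_{\kappa,j,\alpha,p} \|V\|_{W^{j,\infty}(\R)}^2,
\end{equation*}
and summation over $j \le m$ combined with \eqref{eq_lower_bound_interpol} gives \eqref{lower_bound_l2eta}. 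The main technical obstacle is the weight-cancellation identification of the interpolation space with an unweighted Besov scale; once this is granted, the remainder is straightforward bookkeeping with Besov and Sobolev embeddings.
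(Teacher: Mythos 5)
Your overall strategy (logarithmic substitution, weight cancellation at $\theta=1-\tfrac1p$, then Besov/Sobolev embeddings) points in the right direction, but the step you yourself identify as the main obstacle is where the argument breaks. The identification of $(H_{k-2,\alpha-1},H_{k+2,\alpha})_{1-\frac1p,p}$ with the \emph{unweighted} Besov space $B^{k+2-\frac4p}_{2,p}(\R)$ in the $V$-variable is false for $p\ne 2$: off-diagonal real interpolation of weighted $L^2$-scales does not produce the geometric-mean-weight space, but rather an amalgam-type space ($\ell^p$ over unit $s$-blocks of exponentially localized $L^2$-norms), as one sees by computing the $K$-functional explicitly. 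This is exactly the delicacy the paper flags after \eqref{def_k,alpha,p_norm}. Concretely, for $p>2$ take $V=\sum_{k\le N}\chi_k$ with disjoint unit bumps: the interpolation norm is $\sim N^{1/p}$ while $\lVert V\rVert_{B^{0}_{2,p}}\sim N^{1/2}$, so the interpolation norm does \emph{not} dominate the Besov norm and your chain of embeddings collapses. The paper circumvents this by the order in which the tools are applied: for $p\le 2$ it first relaxes $(\cdot,\cdot)_{1-\frac1p,p}$ to $(\cdot,\cdot)_{1-\frac1p,2}$ (where the diagonal computation with Fubini and the scaling $\tau\mapsto x\tau$ is exact), and for $p>2$ it first applies the one-dimensional Sobolev embedding $W^{j,2}(\R)\hookrightarrow W^{j-\frac12+\frac1p,p}(\R)$ to \emph{each endpoint space} and only then interpolates, now in the diagonal $L^p$-case where the amalgam collapses; the loss of $\tfrac12-\tfrac1p$ derivatives is precisely the difference between $k+2-\tfrac4p$ and $k+\tfrac32-\tfrac3p$. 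Your final inequalities in \eqref{eq_char_interpol} are correct, but not for the reason you give.

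The proof of \eqref{lower_bound_l2eta} by pulling out $\lVert V\rVert_{W^{m,\infty}}$ also has a genuine gap: \eqref{eq_lower_bound_interpol} controls only $k+\tfrac32-\tfrac4p$ derivatives in $L^\infty$, whereas \eqref{lower_bound_l2eta} requires $m=\floor{k+\tfrac32-\tfrac3p}$ derivatives, and $\floor{k+\tfrac32-\tfrac3p}>k+\tfrac32-\tfrac4p$ whenever the fractional part of $k+\tfrac32-\tfrac3p$ is smaller than $\tfrac1p$ (e.g.\ $k=2$, $p=\tfrac52$: you need $j=2$ but only have $C^{1,0.9}$ control). The paper's proof instead treats $j=0$ by an explicit $K$-functional formula \eqref{interp_explicit} followed by H\"older in the interpolation parameter (yielding \eqref{interp_xl1}--\eqref{interp_xg1}, which exploit the decay of $\eta$ near $0$ and $\infty$), and treats $1\le j\le m$ by Gagliardo--Nirenberg, converting the $L^p$-control of up to $m$ derivatives supplied by \eqref{eq_char_interpol} into unweighted $L^2$-control; your $L^\infty$-based shortcut gives away exactly the $\tfrac1p$ of a derivative needed here.
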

The proof of Lemma~\ref{lemma_char_interpol} is given in \S\ref{sec_homogeneous_equation}.

\subsubsection{Parabolic Spaces}
For treating the nonlinear problem in \S\ref{sec-nonlinear}, we introduce norms $\vertiii{\cdot}$ for the solution $u$, $\vertiii{\cdot}_1$ for the right-hand side $f$, and $\vertiii{\cdot}_0$ for the initial data $u^{(0)}$ appearing in \eqref{eq_nonlin_cauchy}. Therefore, let $\beta = \gamma_4$ be the largest zero of $\mathfrak p(D)$, as defined in \eqref{eq_zeros_12}. Suppose $k, \tilde k \in \N_0$, $1<p<\infty$ such that $\frac 1 p < \beta$, and $0 < \delta < \tilde\delta < \min\{\tfrac{1}{p}, \beta - \frac 1 p\}$. We define
\begin{subequations}\label{triple_bar_norms}
\begin{align}\nonumber
    \vertiii{u}^p:= &\sup_{t \ge 0} \Big[\verti{u}^p_{\tilde k+8-\frac 4 p,-\tilde\delta,p}+\verti{u-u_0}^p_{\tilde k+8-\frac 4 p,\tilde\delta,p}+t^{p\beta-1} \verti{u-u_0}^p_{k + 8 - \frac 4 p,\beta - \frac 1 p - \delta,p} \\ &+t^{p\beta-1} \verti{u-u_0}^p_{k + 8 - \frac 4 p, \beta - \frac 1 p + \delta,p}\Big] + \int_0^\infty \Big[\verti{\partial_t u}^p_{\tilde k+4,-1+\frac{1}{p}-\tilde\delta}+\verti{\partial_t u}^p_{\tilde k+4,-1+\frac{1}{p}+\tilde\delta} \nonumber \\
    &+ t^{p\beta-1} \verti{\partial_t u}^p_{k+4,\beta-1-\delta}+t^{p\beta-1} \verti{\partial_t u}^p_{k+4,\beta-1+\delta}+ \verti{u-u_0}^p_{\tilde k+8,\frac{1}{p}-\tilde\delta} \nonumber \\
    &+ \verti{u-u_0}^p_{\tilde k+8,\frac{1}{p}+\tilde\delta}
    + t^{p\beta-1} \verti{u-u_0}^p_{k+8,\beta-\delta} + t^{p\beta-1} \verti{u-u_0 -u_{\beta}x^{\beta}}^p_{k+8,\beta+\delta} \Big] \d t, \label{eq_triple_bar_norm} \\
    \vertiii{u^{(0)}}_0^p :=&\verti{u^{(0)}}^p_{\tilde k+8-\frac 4 p,-\tilde\delta,p}+\verti{u^{(0)}-u_0^{(0)}}^p_{\tilde k+8-\frac 4 p,\tilde\delta,p}, \label{eq_def_norm_u^0} \\
    \vertiii{f}^p_{1}:=&\int_0^\infty \Big[\verti{f}^p_{\tilde k+4,-1+\frac{1}{p}-\tilde\delta}+\verti{f}^p_{\tilde k+4,-1+\frac{1}{p}+\tilde\delta}+t^{p\beta-1}\verti{f}^p_{k+4, -1+\beta -\delta}+t^{p\beta-1}\verti{f}^p_{k+4,-1+\beta +\delta}\Big] \d t. \label{eq_norm_1_rhs}
\end{align}
\end{subequations}
Here, $u_0(t) := \lim_{x \searrow 0} u(t,x)$ for $t \ge 0$, $u_\beta(t) := \lim_{x \searrow 0} x^{-\beta} (u(t,x)-u_0(t))$ for $t \ge 0$, and $u_0^{(0)} := \lim_{x \searrow 0} u^{(0)}(x)$ are spatial boundary traces of $u$ and $u^{(0)}$. The norms \eqref{triple_bar_norms} are well-defined if
\begin{align*}
u^{(0)} &\in H_{\tilde k+8-\frac 4 p,-\tilde\delta,p}, \qquad u^{(0)} - u^{(0)}_0 \in H_{\tilde k+4-\frac 4 p,\tilde\delta,p}, \\
f &\in L^p\big(0,\infty;H_{\tilde k+4,-1+\frac{1}{p}-\tilde\delta} \cap H_{\tilde k+4,-1+\frac{1}{p}+\tilde\delta}\big), \\
\big(t \mapsto t^{\beta - \frac 1 p} f(t)\big) &\in L^p\big(0,\infty;H_{k+4, -1+\beta -\delta} \cap H_{k+4,-1+\beta +\delta}\big), \\
u &\in BC^0\big([0,\infty);H_{\tilde k+8-\frac 4 p,-\tilde\delta,p}\big), \\
u-u_0 &\in BC^0\big([0,\infty);H_{\tilde k+8-\frac 4 p,\tilde\delta,p}\big), \\
u-u_0 &\in  L^p\big(0,\infty;H_{\tilde k+8,\frac{1}{p}-\tilde\delta} \cap H_{\tilde k+8,\frac{1}{p}+\tilde\delta}\big), \\
\big(t \mapsto t^{\beta - \frac 1 p} (u(t)-u_0(t))\big) &\in BC^0\big([0,\infty);H_{k + 8 - \frac 4 p,\beta - \frac 1 p - \delta,p} \cap H_{k + 8 - \frac 4 p,\beta - \frac 1 p + \delta,p}\big), \\
\partial_t u &\in L^p\big(0,\infty;H_{\tilde k+4,-1+\frac{1}{p}-\tilde\delta} \cap H_{\tilde k+4,-1+\frac{1}{p}+\tilde\delta}\big), \\
\big(t \mapsto t^{\beta-\frac 1 p} \partial_t u(t)\big) &\in L^p\big(0,\infty;H_{k+4,\beta-1-\delta} \cap H_{k+4,\beta-1+\delta}\big), \\
\big(t \mapsto t^{\beta-\frac 1 p} (u(t)-u_0(t))\big) &\in L^p(0,\infty;H_{k+8,\beta-\delta}), \\
\big(t \mapsto t^{\beta-\frac 1 p} (u(t)-u_0(t)-u_\beta(t) x^\beta)\big) &\in L^p(0,\infty;H_{k+8,\beta+\delta}).
\end{align*}
In this case, $u_0^{(0)}$, $u_0$, and $u_\beta$ are uniquely determined. We denote the corresponding Banach spaces for the initial data $u^{(0)}$, solution $u$ and right-hand side $f$ by $U^{(0)}(\tilde k, \tilde\delta, p)$, $U(k,\tilde k, \delta, \tilde\delta, p)$, and $F(k,\tilde k, \delta, \tilde\delta, p)$, respectively. On a finite time interval $I = [0,T)$ with $T \in (0,\infty)$, we write $U(k,\tilde k, \delta, \tilde\delta, p,I)$ and $F(k,\tilde k, \delta, \tilde\delta, p,I)$ (see Proposition~\ref{prop_sol_Au=v} below). Relevant embeddings are listed in Lemma~\ref{lem_coeff_est}.

\subsection{The Main Result}\label{sec_main_result}
\subsubsection{Well-posedness and asymptotic stability}

In order to prove our main theorem, Theorem~\ref{Main_thm}, we need the following result, which is proved in \S\ref{sec_inhomogeneous_equation}.
\begin{lemma}[Coercivity]\label{lem_coercive_range}
Let $w\in H_{\alp}^\C$. There exists a constant $K_\alpha > 0$ such that
\begin{equation}\label{coercivity-pd}
\Re (w,\mathfrak p(D)w)_{\alp} \ge K_\alpha \verti{w}_{2,\alpha}^2 \quad \text{for all } w \in C_\mathrm{c}^\infty((0,\infty);\C)
\end{equation}
if
\begin{subequations}\label{coerc_range}
\begin{align}
    \alp &\in \big(\tfrac{1-2n}{2(2-n)},0\big)\cap \Big[\tfrac{1 - 2 n - \frac{1}{\sqrt 3} \sqrt{13 - 12n + 4 n^2}}{4 (2-n)},\tfrac{1 - 2 n + \frac{1}{\sqrt 3} \sqrt{13 - 12n + 4 n^2}}{4 (2-n)}\Big] && \text{for }n \in \left(1, \tfrac{3}{2}\right), \label{coerc_range_n132}\\
    \alp &\in \big(\tfrac{4n-9-\sqrt{-27+36n-8n^2}}{2n},0\big)\cap \big[\tfrac{n-3}{n}-\tfrac{1}{\sqrt{2n}}, \tfrac{n-3}{n}+\tfrac{1}{\sqrt{2n}}\big] && \text{for }n \in \left( \tfrac{3}{2},3\right). \label{coerc_range_n323}
\end{align}
\end{subequations}
We will refer to the set of all $\alpha \in \R$ for which \eqref{coercivity-pd} holds true as the \emph{coercivity range} of $\mathfrak p(D) = \mathfrak p_n(D)$, see Figure~\ref{fig:coercivity_range}.
\end{lemma}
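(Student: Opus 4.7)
The plan is to exploit the skew-symmetry of $D-\alpha$ with respect to the weighted inner product $(\cdot,\cdot)_\alpha$. Integration by parts yields $D^* = 2\alpha - D$ on $C_{\mathrm{c}}^\infty((0,\infty);\C)$, hence $(D-\alpha)^* = -(D-\alpha)$ and $p(D)^* = p(2\alpha-D)$. Taylor expanding $p$ about $\alpha$ with shift $y = D-\alpha$ makes the odd powers cancel in the symmetrization, giving
\[
p(D) + p(D)^* = 2p(\alpha) + p''(\alpha)(D-\alpha)^2 + 2(D-\alpha)^4.
\]
Pairing with $w$ and using the skew-adjointness identities $(w,(D-\alpha)^2 w)_\alpha = -\verti{(D-\alpha)w}_\alpha^2$ and $(w,(D-\alpha)^4 w)_\alpha = \verti{(D-\alpha)^2 w}_\alpha^2$, one arrives at
\[
2\,\mathrm{Re}(w,p(D)w)_\alpha = 2p(\alpha)\verti{w}_\alpha^2 - p''(\alpha)\verti{(D-\alpha)w}_\alpha^2 + 2\verti{(D-\alpha)^2 w}_\alpha^2.
\]

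Under the two conditions $p(\alpha)>0$ and $p''(\alpha)\le 0$, all three terms on the right are non-negative. Binomially expanding $D^j = ((D-\alpha)+\alpha)^j$ gives $\verti{w}_{2,\alpha}^2 \lesssim_\alpha \verti{w}_\alpha^2 + \verti{(D-\alpha)w}_\alpha^2 + \verti{(D-\alpha)^2 w}_\alpha^2$, and Cauchy--Schwarz applied to $\verti{(D-\alpha)w}_\alpha^2 = -(w,(D-\alpha)^2 w)_\alpha$ absorbs the middle term into the outer two. This furnishes $\mathrm{Re}(w,p(D)w)_\alpha \ge K_\alpha \verti{w}_{2,\alpha}^2$ with $K_\alpha > 0$ depending on $\alpha$, $n$, and $p(\alpha)$.

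It remains to identify the $\alpha$-range where $p(\alpha)>0$ and $p''(\alpha)\le 0$ hold simultaneously. Since $\gamma_1<\gamma_2<0=\gamma_3<\gamma_4=\beta$, a sign analysis of $p(\alpha)=\prod_j(\alpha-\gamma_j)$ shows that $p(\alpha)>0$ on the relevant central window iff $\alpha\in(\gamma_2,0)$, reproducing the first interval in both \eqref{coerc_range_n132} and \eqref{coerc_range_n323}. Writing $p''(\alpha) = 12\alpha^2 - 6\sigma_1\alpha + 2\sigma_2$ with $\sigma_k = \sigma_k(\gamma_1,\ldots,\gamma_4)$ the elementary symmetric polynomials, the inequality $p''(\alpha)\le 0$ defines an interval centered at $\sigma_1/4$ with half-width $\tfrac{1}{4}\sqrt{\sigma_1^2-\tfrac{8}{3}\sigma_2}$. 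Direct evaluation via Vieta's formulas produces $\sigma_1 = \tfrac{1-2n}{2-n}$, $\sigma_2 = \tfrac{4n^2-5}{4(2-n)^2}$ for $n\in(1,\tfrac{3}{2})$ from \eqref{eq_zeros_1}, and $\sigma_1 = \tfrac{4(n-3)}{n}$, $\sigma_2 = \tfrac{6n^2-39n+54}{n^2}$ for $n\in(\tfrac{3}{2},3)$ using the intermediate relations $\omega_1+\omega_2 = 4-\tfrac{9}{n}$ and $\omega_1\omega_2 = \tfrac{3(2n-3)(n-3)}{n^2}$ coming from the inner quadratic factor in \eqref{eq_def_p(D)_3/2<n<3}. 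Plugging these into the half-width formula recovers exactly the second intervals in \eqref{coerc_range_n132} and \eqref{coerc_range_n323}, respectively. The main obstacle is precisely this last case-by-case algebraic verification, particularly for $n\in(\tfrac{3}{2},3)$ where working at the level of Vieta sums and products rather than with the individual roots $\omega_1,\omega_2$ is essential to keep the expressions polynomial in $n$.
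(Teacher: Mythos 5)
Your proposal is correct, and at its core it is the same argument as the paper's: both reduce coercivity to the two conditions ``$p(\alpha)>0$'' and ``$|\alpha-m(\gamma)|\le\sigma(\gamma)/\sqrt3$'', and both then evaluate the relevant quantities via the explicit roots \eqref{eq_zeros_12}. The difference is in how the criterion is established. The paper quotes it as Lemma~\ref{lem_coercive}, whose proof is deferred to \cite[Proposition~5.3]{GGKO} and runs through the substitution $s=\ln x$ and Plancherel, where the symmetrized symbol is $p(\alpha)-\tfrac12 p''(\alpha)\xi^2+\xi^4$. You instead prove it self-containedly on the physical side: the skew-adjointness $(D-\alpha)^*=-(D-\alpha)$ with respect to $(\cdot,\cdot)_\alpha$ gives $p(D)+p(D)^*=2p(\alpha)+p''(\alpha)(D-\alpha)^2+2(D-\alpha)^4$, which is exactly the operator counterpart of that symbol identity; your reformulation of the variance condition \eqref{eq_condition_coerc_2} as $p''(\alpha)\le 0$ is correct, since $m(\gamma)=\sigma_1/4$ and $\tfrac13\sigma^2(\gamma)=\tfrac{1}{16}(\sigma_1^2-\tfrac83\sigma_2)$ make the two intervals coincide. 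Your Vieta computations check out in both regimes ($\sigma^2(\gamma)=\tfrac{13-12n+4n^2}{16(2-n)^2}$ for $n\in(1,\tfrac32)$ and $\sigma^2(\gamma)=\tfrac{3}{2n}$ for $n\in(\tfrac32,3)$, matching the paper), and the final absorption of $\verti{(D-\alpha)w}_\alpha^2$ via Cauchy--Schwarz to recover the full $\verti{w}_{2,\alpha}^2$ is sound. One cosmetic point: as in the paper, the inequality \eqref{coercivity-pd} for complex $w$ should be read as a bound on $\Re(w,p(D)w)_\alpha$, which is what your symmetrization actually delivers and what is used later in \eqref{coercivity-bil}.
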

\begin{figure}[h]
     \centering
     \begin{subfigure}[b]{0.45\textwidth}
         \centering
         \includegraphics[width=1.1\textwidth]{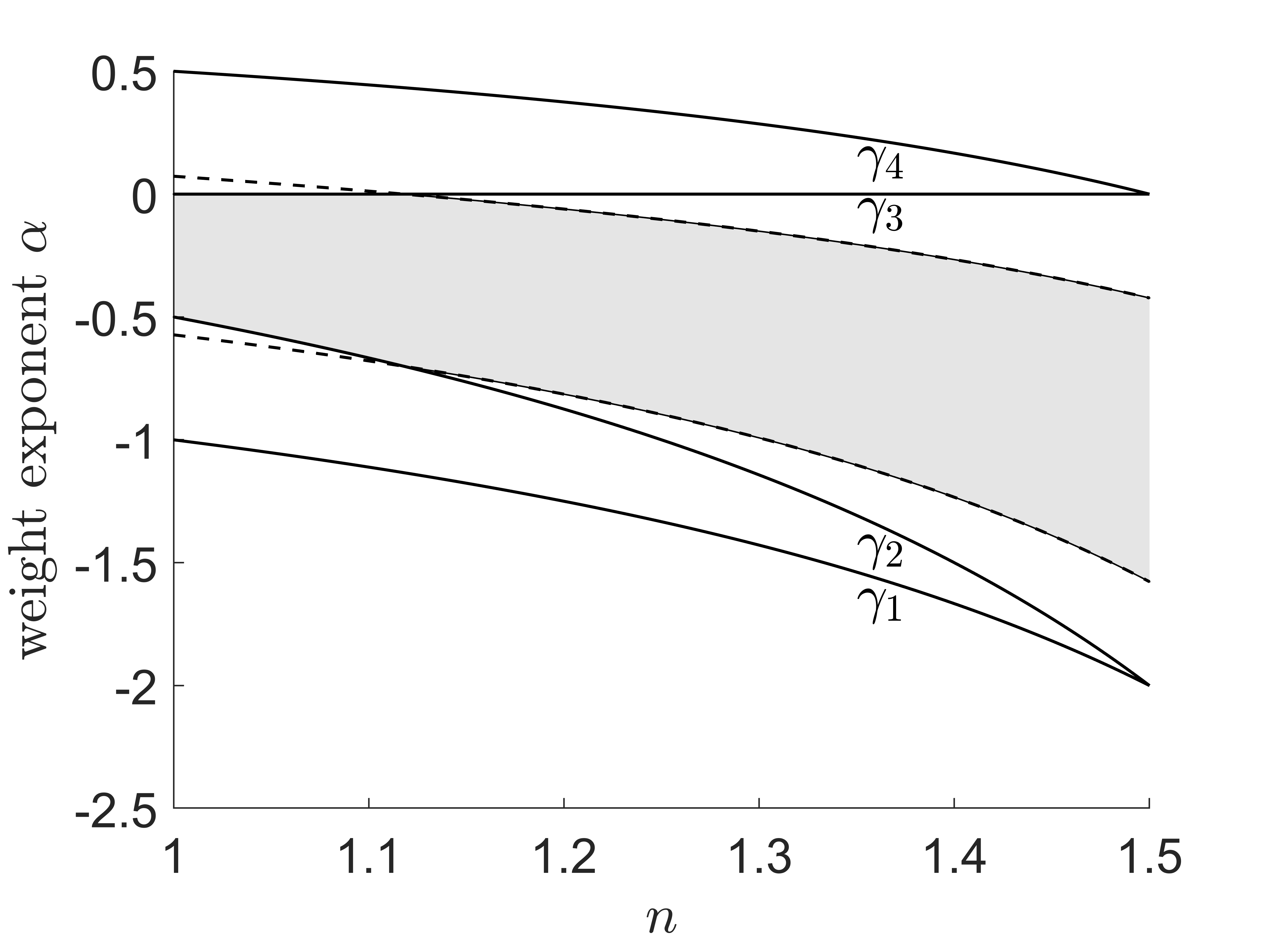}
         \caption{$n\in (1,\tfrac{3}{2})$}
         \label{fig:coercivity_range_a}
     \end{subfigure}
     \hfill
     \begin{subfigure}[b]{0.45\textwidth}
         \centering
         \includegraphics[width=1.1\textwidth]{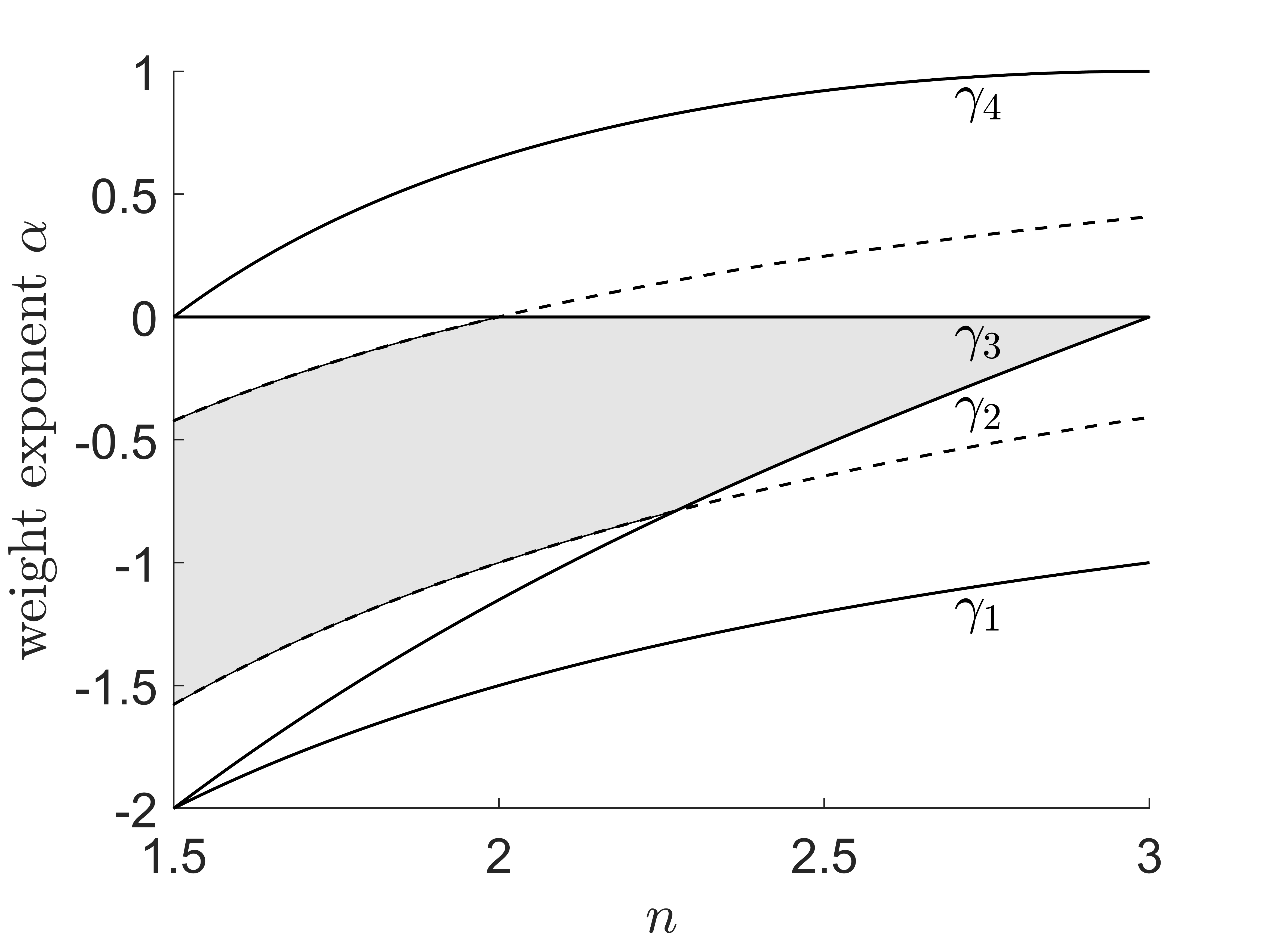}
         \caption{$n\in(\tfrac{3}{2},3)$}
         \label{fig:coercivity_range_b}
     \end{subfigure}
        \caption{For the two different cases of $n$ the zeros $\gamma_1,\dots,\gamma_4=\beta$ of $\mathfrak p(D)$ (solid lines) and the upper and lower bound in \eqref{coerc_range} (dashed lines) are shown. The coercivity range for $\alp$ contains the shaded area.}
        \label{fig:coercivity_range}
\end{figure}

A full characterization of the coercivity range of $\mathfrak p(D)$ (which is analytically rigorous at least for $n < \frac 3 2$) is carried out in \S\ref{sec-conclusion}, but is not necessary for the main result of this paper, Theorem~\ref{Main_thm}:
\begin{theorem}\label{Main_thm}
For every $n\in(1,3) \setminus \{\frac 3 2\}$, choose $1 < p < \infty$ such that $\frac{1}{p}<\beta=\gamma_4$. Take $\tilde \delta > \delta > 0$ such that $0<\tilde\delta< \min\{-\gamma_2, \beta - \frac 1 p, \frac 1 p, 1 - \frac 1 p\}$. Let $k, \tilde k \in \N_0$ such that
$\tilde k > k + \frac 1 2 + \frac 3 p$. Further suppose that $\alpha \in \{\frac 1 p -1 \pm \tilde \delta, \beta -1 \pm \delta\}$ are in the coercivity range (cf.~\eqref{coerc_range} of Lemma~\ref{lem_coercive_range} for a sufficient criterion).

\medskip

Then there exists an $\varepsilon = \eps(k,\tilde k,\delta,\tilde\delta,p) >0$ such that for all $u^{(0)} \in U^{(0)}(\tilde k, \tilde\delta, p)$ with $\vertiii{u^{(0)}}_0<\varepsilon$, the nonlinear Cauchy problem \eqref{eq_nonlin_cauchy} has a unique classical solution $u \in U(k,\tilde k, \delta, \tilde\delta, p)$. This solution satisfies the a-priori estimate
\begin{equation}\label{apriori-main}
\vertiii{u} \lesssim_{k,\tilde k,\delta,\tilde \delta,p} \vertiii{u^{(0)}}_0.
\end{equation}
Furthermore, $[0,\infty) \owns t \mapsto \vertiii{u(t)}_0$ is continuous and $\vertiii{u(t)}_0 \to 0$ as $t \to \infty$, which implies that the stationary solution $y^2$ or traveling wave \eqref{sol-tw}, respectively, is asymptotically stable.
\end{theorem}
%

\subsubsection{Discussion}
Note that Theorem~\ref{Main_thm} is the generalization of \cite[Theorem~3.1]{GGKO}, which (cf.~\cite[Remark~3.4]{GGKO}) applies for the range \eqref{mobility-range-1} of mobility exponents $n$. Notice that the proved regularity is sufficient to control four classical derivatives in space of $u$ for $x > 0$. The time derivative is attained in terms of semi-group theory in the strong sense with values in a Banach space with point-wise control, thus ensuring classical differentiability in time for $x > 0$. Hence, the regularity attained is sufficient to satisfy the boundary conditions \eqref{TFEb} and \eqref{TFEc} classically (see also \S\ref{sssec:reg_free} and \S\ref{sssec:original}). The main difference between \cite[Theorem~3.1]{GGKO} and Theorem~\ref{Main_thm} is that here, a larger range $n \in (1,3) \setminus \{\frac 3 2\}$ of mobility exponents is covered (in particular, up to the boundary values $n = 1$, Greenspan's slip condition \cite{Greenspan1978} or the lubrication approximation of Darcy's law in the Hele-Shaw cell \cite{GiacomelliOtto2003,KnuepferMasmoudi2013,KnuepferMasmoudi2015,MatiocProkert2012}, and $n = 3$, the no-slip case \cite{ConstantinDupontGoldsteinKadanoffShelleyZhou1993, GuentherProkert2008}). This is possible because we use $L^p$-norms in time (cf.~\eqref{triple_bar_norms}), allowing to meet $\frac{1}{p}<\beta$, in comparison with $L^2$-norms in time in \cite{GGKO} leading to the constraint $\beta > \frac 1 2$ in \cite{GGKO}. From \eqref{eq_zeros_1} we infer that $\beta < \frac 1 2$ for $n \in (1,\frac 3 2)$ and that $\beta \to 0$ as $n \to \frac 3 2$, explaining the lower bound in \eqref{mobility-range-1} using the $L^2$ in time approach of \cite{GGKO}. Furthermore, our analysis requires that both $\beta-1$ and $\frac 1p -1$ are in the coercivity range of $\mathfrak p(D)$ (cf.~Lemma~\ref{lem_coercive_range}). The latter constraint for $p = 2$ leads to the upper bound on $n$ in \eqref{mobility-range-1}, which is necessary for the approach of \cite{GGKO} to apply. Choosing $p \in (1,\infty)$ suitably also removes this constraint, so that Theorem~\ref{Main_thm} applies for the natural range \eqref{mobility-range-2}. For these reasons, and in order to simplify the arguments, the $L^p$ in time approach appears natural in our situation to cover the physically natural range of slip conditions determined by \eqref{mobility-range-2}.

\medskip

Notice that because of \eqref{eq_lower_bound_interpol} of Lemma~\ref{lemma_char_interpol}, \eqref{eq_triple_bar_norm}, \eqref{eq_def_norm_u^0}, and \eqref{apriori-main} of Theorem~\ref{Main_thm}, the norm $\sup_{t,x \ge 0} \verti{u(t,x)}$ is controlled and sufficiently small if $\vertiii{u^{(0)}}_0$ is sufficiently small. Because of \eqref{def-h-1n32}, \eqref{def-x-1n32}, and \eqref{def-u-1n32}, this entails that for $n \in (1,\frac 3 2)$ the function $[0,\infty) \owns x \mapsto Z(t,x)$ is for fixed $t \ge 0$ strictly increasing in $x$, so that the transformation \eqref{eq_transform_1<n<3/2} is well defined. Likewise, because of \eqref{def-h-32n3} and \eqref{def-u-32n3} the function $[0,\infty) \owns x \mapsto Z(t,x)$ is for fixed $t \ge 0$ strictly increasing in $x$, so that the transformation \eqref{eq_transform_>3/n} is well-defined. An optimal result, in the flavor of \cite{John2015,Seis2018} for the thin-film equation and \cite{Kienzler2016,Koch1999} for the second-order degenerate-parabolic porous-medium equation, would only require $\sup_{x \ge 0} \verti{u^{(0)}(x)}$ to be small and is left open for future work.

\subsubsection{Regularity at the free boundary\label{sssec:reg_free}}
In view of Lemma~\ref{lem_coeff_est} and the definition of the norm $\vertiii{\cdot}$ in \eqref{eq_triple_bar_norm}, it follows from Theorem~\ref{Main_thm} that the solution $u$ has an expansion
\begin{equation}\label{partial_regularity}
u(t,x) = u_0(t) + u_\beta(t) x^\beta + o(x^{\beta + \delta}) \quad \text{as } x \searrow 0, \text{ almost everywhere in time } t,
\end{equation}
where $[0,\infty) \owns t \to u_0(t)$ is continuous with $u_0(t) \to 0$ as $t \to \infty$ and $u_\beta(t) = o(t^{-\beta})$ as $t \to \infty$. Note that the asymptotic expression \eqref{partial_regularity} as well as asymptotic expressions below are understood to hold also on taking a controlled number of derivatives in $x$. The asymptotic \eqref{partial_regularity} is in line with findings in \cite{BelgacemGnannKuehn2016,GiacomelliGnannOtto2013}, where, after factoring off the leading-order behavior, source-type self-similar solutions were extended to analytic functions in $(x_1,x_2)$ on identifying $x_1 = x$ and $x_2 = x^\beta$. A corresponding regularity result for \eqref{TFE} with $n = 2$ was proved in \cite{G2016}, that is, a generalized Taylor expansion of $u$ in terms of $x_1$ and $x_2$ was proved to any order for positives times $t > 0$, but does not have to be controlled at time $t = 0$. In this sense, \eqref{partial_regularity} is a partial-regularity result and a result analogous to \cite{G2016}, a generalized degenerate-parabolic higher-order smoothing effect, is expected to be true also for $n \in (1,3) \setminus \{\frac 3 2\}$. Yet, the proof of \cite{G2016} does not in general carry over to other mobilities than the quadratic one, as there it was crucial that the coercivity range contains the interval $(-1,0)$ (see \eqref{coerc_range_n323} in Lemma~\ref{lem_coercive_range} and the more general characterization in \S\ref{sec-conclusion}).

\subsubsection{Transformation into the original variables\label{sssec:original}}
\paragraph{Case $n \in (1,\frac 3 2)$}
We deduce that
\begin{equation}\label{as-h-y-1n32}
H \stackrel{\eqref{def-u-1n32}}{=} u+1 \stackrel{\eqref{def-x-1n32},\eqref{eq_zeros_1},\eqref{partial_regularity}}{=} 1 + u_0(t) + \tfrac{u_\beta(t)}{(4-2n)^{2 \frac{3-2n}{2-n}}} y^{3-2n} + o(y^{3-2n + (4-2n)\delta}) \quad \text{as } y \searrow 0, 
\end{equation}
almost everywhere in time $t$. From \eqref{eq_equivalent} and \eqref{def-h-1n32} we get
\[
Z_t = y^{2n-3} (y \partial_y + 2 n - 1) H y \partial_y H (y \partial_y + 1) H,
\]

so that with \eqref{as-h-y-1n32} and analogous expressions for $y\partial_y$ derivatives, we obtain the velocity of the contact line as
\[
Z_t(t,0) = \tfrac{2 (3-2n) (5-2n) (1+u_0(t))^2 u_\beta(t)}{(4-2n)^{2 \frac{3-2n}{2-n}}},
\]
almost everywhere in time $t$. We infer that $Z_t(t,0) = o(t^{-\beta})$ as $t \to \infty$. From  \eqref{def-h-1n32} and \eqref{as-h-y-1n32} we obtain
\[
Z_y = \tfrac{1}{1+u_0(t)} - \tfrac{u_\beta(t)}{(4-2n)^{2 \frac{3-2n}{2-n}} (1+u_0(t))^2} y^{3-2n} + o(y^{3-2n + (4-2n)\delta}) \quad \text{as } y \searrow 0,
\]
almost everywhere in time $t$, that is,
\[
Z(t,y) = Z(t,0) + \tfrac{1}{1+u_0(t)} y - \tfrac{u_\beta(t)}{(4-2n)^{\frac{8-5n}{2-n}} (1+u_0(t))^2} y^{4-2n} + o(y^{4-2n + (4-2n)\delta}) \quad \text{as } y \searrow 0,
\]
almost everywhere in time $t$, so that, introducing the distance $\tilde z := Z(t,y) - Z(t,0)$, by inversion
\[
y = (1+u_0(t)) \tilde z + \tfrac{(1+u_0(t))^{3-2n} u_\beta(t)}{(4-2n)^{\frac{8-5n}{2-n}}} \tilde z^{4-2n} + o(\tilde z^{4-2n + (4-2n)\delta}) \quad \text{as } \tilde z \searrow 0,
\]
almost everywhere in time $t$. With \eqref{eq_transform_1<n<3/2} this entails for the film height
\[
h(t,y) = (1+u_0(t))^2 \tilde z^2 \Big(1 + \tfrac{2 (1+u_0(t))^{2-2n} u_\beta(t)}{(4-2n)^{\frac{8-5n}{2-n}}} \tilde z^{3-2n} + o(\tilde z^{3-2n + (4-2n)\delta})\Big) \quad \text{as } \tilde z \searrow 0,
\]
almost everywhere in time $t$. This is qualitatively the same leading- and next-to-leading order asymptotic behavior as in \cite[Theorem~3.1]{BelgacemGnannKuehn2016} for source-type self-similar solutions.

\paragraph{Case $n \in (\frac 3 2,3)$}
We note that
\begin{equation}\label{as-h-y-32n3}
H \stackrel{\eqref{def-u-32n3}}{=} 1+u \stackrel{\eqref{partial_regularity}}{=} 1 + u_0(t) + u_\beta(t) x^\beta + o(x^{\beta+\delta}) \quad \text{as } x \searrow 0, \text{ almost everywhere in time } t.
\end{equation}
From \eqref{zt-32n3} we infer
\[
Z_t = (x \partial_x + \tfrac 3 n) H (x \partial_x + \tfrac 3 n - 2) H (x \partial_x + \tfrac 3 n - 1) H,
\]
which entails with \eqref{as-h-y-32n3} and analogous expressions for $x\partial_x$ derivatives
\[
Z_t(t,0) = - \tfrac 3 n (\tfrac 3 n - 1) (2 - \tfrac 3 n) (1+u_0(t))^3
\]
for the velocity of the contact line. In particular, we have $Z_t(t,0) \to (3 - \tfrac 3 n) (2 - \tfrac 3 n) (\tfrac 3 n - 1)$ as $t \to \infty$. From \eqref{def-h-32n3} and \eqref{as-h-y-32n3} it then follows
\[
Z_x = \tfrac{1}{1+u_0(t)} - \tfrac{u_\beta(t)}{(1+u_0(t))^2} x^\beta + o(x^{\beta+\delta}) \quad \text{as } x \searrow 0,
\]
almost everywhere in time $t$. Integration gives
\[
Z(t,x) = Z(t,0) + \tfrac{1}{1+u_0(t)} x - \tfrac{u_\beta(t)}{(1+\beta) (1+u_0(t))^2} x^{1+\beta} + o(x^{1+\beta+\delta}) \quad \text{as } x \searrow 0,
\]
almost everywhere in time $t$, so that after inversion it holds for the distance $\tilde z := Z(t,x) - Z(t,0)$ to the free boundary
\[
x = (1+u_0(t)) \tilde z + \tfrac{u_\beta(t) (1+u_0(t))^{\beta}}{1+\beta} \tilde z^{1+\beta} + o(\tilde z^{1+\beta+\delta}) \quad \text{as } \tilde z \searrow 0.
\]
With help of \eqref{eq_transform_>3/n} this amounts to
\[
h = (1+u_0(t))^{\frac 3 n} \tilde z^{\frac 3 n} \big(1 + \tfrac{3 u_\beta(t) (1+u_0(t))^{\beta-1}}{n (1+\beta)} \tilde z^\beta + o(\tilde z^{\beta+\delta})\big) \quad \text{as } \tilde z \searrow 0.
\]
This is in line with the asymptotics for source-type self-similar solutions as in \cite[Theorem~1]{GiacomelliGnannOtto2013} in leading- and next-to-leading order.

\subsubsection{Further questions}
Here, we only mention two selected questions of interest for future work, the first concerning non-Newtonian thin-film equations as studied for instance in \cite{AnsiniGiacomelli2004}. Power-law or Ostwald-de-Waele liquids lead to a changed degeneracy compared to the one in \eqref{TFEa}, so that the leading-order terms in the dynamics around a steady state such as a stationary solution have features of a $p$-Laplacian including a degeneracy. It is widely open how to transfer the techniques developed here for fourth-order degenerate-parabolic equations to such settings.

\medskip

The second question concerns droplet rupture as studied for instance in \cite{ConstantinDupontGoldsteinKadanoffShelleyZhou1993,ConstantinElgindiNguyenVicol2018}. In \cite{ConstantinDupontGoldsteinKadanoffShelleyZhou1993} it is shown that under suitable boundary conditions, initially positive solutions to \eqref{TFEa} with $n = 1$ pinch off in finite or infinite time. To our knowledge, a corresponding result for nonlinear mobilities is lacking so far. We further point out that our setting may allow for an extension of such solutions beyond pinch off.

\section{Maximal Regularity for the Linear Problem}\label{section_4}
Consider the linear inhomogeneous Cauchy problem
\begin{subequations}\label{eq_full_cauchy}
\begin{alignat}{2}
    \partial_t u-Au &= f\qquad  &&\text{for } t,x>0,\\
    u &=u^{(0)} \qquad&&\text{for } x>0 \text{ and at } t = 0,
\end{alignat}
\end{subequations}
where
\begin{subequations}\label{def-a-op}
\begin{equation}\label{a-op-pd}
A=-x^{-1}\mathfrak p(D)
\end{equation}
and where $\mathfrak p(D)$ is the fourth-order polynomial operator as introduced in \S\ref{sec_Non_Cauchy_Problem}. Here,
\begin{equation}\label{a-op-domain}
A\colon X\supset D(A)\to X \quad \text{with } X:=H_{k-2,\alpm}^\K \text{ and } D(A) := H_{k-2,\alpha-1}^\K \cap H_{k+2,\alpha}^\K.
\end{equation}
\end{subequations}

The next lemma characterizes the elliptic regularity and closedness of the operator $A$.
\begin{lemma}\label{lem_ineq_hardy}
Suppose $k \in \N$ with $k \ge 2$, and $\alpha \in \R \setminus \{\gamma \colon \gamma \text{ is a root of } \mathfrak p(\zeta)\}$. Then we have for $A \stackrel{\eqref{a-op-pd}}{=} -x^{-1}\mathfrak p(D)$ that
\begin{equation}\label{eq_ineq_hardy}
    \verti{Au}_{k-2,\alpm} \sim_{k,\alpha} \verti{u}_{k+2,\alp} \quad \text{for all } u\in D(A).
\end{equation}
In particular, $A$ is a closed and densely defined operator.
\end{lemma}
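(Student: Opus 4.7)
The plan is to reduce the desired a-priori estimate to a constant-coefficient Fourier-multiplier bound on $\R$ via the logarithmic substitution $s = \log x$. First I would observe that the commutation $D\,x^{-1} = x^{-1}(D-1)$, iterated, gives $D^j x^{-1} = x^{-1}(D-1)^j$, whence
\[
    \verti{Au}_{k-2,\alpm}^2 = \sum_{j=0}^{k-2}\int_0^\infty x^{-2\alpha}\big((D-1)^j p(D) u\big)^2 \tfrac{\d x}{x} \sim_{k,\alpha} \verti{p(D) u}_{k-2,\alpha}^2,
\]
using that $\{(D-1)^j\}_{j=0}^{k-2}$ and $\{D^j\}_{j=0}^{k-2}$ span the same subspace of polynomials in $D$ of degree at most $k-2$. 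This reduces \eqref{eq_ineq_hardy} to showing $\verti{p(D) u}_{k-2,\alpha} \gtrsim_{k,\alpha} \verti{u}_{k+2,\alpha}$.

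The map $u \mapsto V$ with $V(s) := e^{-\alpha s} u(e^s)$ provides an isomorphism of $H_{k,\alpha}^\K$ onto $H^k(\R;\K)$ with equivalent norms, generalizing the $\alpha = 0$ isometry, and intertwines $D = x\partial_x$ with $\partial_s + \alpha$. Hence $p(D)$ is conjugate to the Fourier multiplier with symbol $p(i\xi + \alpha) = \prod_{j=1}^4(i\xi + \alpha - \gamma_j)$. Since $\alpha \ne \gamma_j$ for each $j$, the quantity $c_\alpha := \prod_{j=1}^4 \min\{(\alpha-\gamma_j)^2,1\}$ is strictly positive, and
\[
    |p(i\xi+\alpha)|^2 = \prod_{j=1}^4 \big((\alpha-\gamma_j)^2 + \xi^2\big) \ge c_\alpha\,(1+\xi^2)^4,
\]
so Plancherel yields
\[
    \|p(\partial_s + \alpha) V\|_{H^{k-2}(\R)}^2 = \int_\R (1+\xi^2)^{k-2}\,|p(i\xi+\alpha)|^2\,|\widehat V(\xi)|^2 \, \d\xi \gtrsim_\alpha \|V\|_{H^{k+2}(\R)}^2,
\]
which on translating back gives \eqref{eq_ineq_hardy}. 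The matching upper bound $\verti{p(D)u}_{k-2,\alpha} \lesssim_{k,\alpha} \verti{u}_{k+2,\alpha}$, and in particular continuity of $A\colon H_{k-2,\alpm}^\K \cap H_{k+2,\alp}^\K \to H_{k-2,\alpm}^\K$, follows from the same identity since $|p(i\xi+\alpha)| \lesssim_\alpha (1+\xi^2)^2$.

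For the domain identification $D(A) = H_{k-2,\alpm}^\K \cap H_{k+2,\alp}^\K$, the inclusion $\supset$ is the continuity just noted. For the converse, given $v \in D(A)$ I would approximate by test functions $v_n \in C_\mathrm{c}^\infty((0,\infty);\K)$ (via truncation in $s$ followed by mollification) chosen so that $v_n \to v$ and $Av_n \to Av$ in $H_{k-2,\alpm}^\K$; the a-priori estimate applied to $v_n - v_m$ then forces $(v_n)$ to be Cauchy in $H_{k+2,\alp}^\K$, whence $v \in H_{k+2,\alp}^\K$. Closedness follows immediately from the a-priori bound and continuity of $A$, and density of $D(A)$ in $H_{k-2,\alpm}^\K$ is clear since $C_\mathrm{c}^\infty((0,\infty);\K) \subset D(A)$. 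The main obstacle lies in this last approximation: the prefactor $x^{-1}$ corresponds to multiplication by $e^{-s}$ in logarithmic coordinates and does not commute with translations, so one must truncate \emph{before} mollifying in order to control $v_n - v$ and $Av_n - Av$ simultaneously in $H_{k-2,\alpm}^\K$; once this routine truncate-then-mollify procedure is in place, the remainder of the proof is purely algebraic and relies only on the Fourier-multiplier estimate above.
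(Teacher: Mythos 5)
Your proof of the a-priori estimate itself is correct but takes a genuinely different route from the paper. The paper peels off the four factors of $p(D)$ one at a time: it writes $p(D)=(D-\gamma)\tilde p(D)$, conjugates with $x^{-\gamma}$, and applies Hardy's inequality once per factor, iterating four times. You pass to $s=\log x$ once and handle all four factors simultaneously through the symbol bound $|p(i\xi+\alpha)|^2=\prod_{j}\big(\xi^2+(\alpha-\gamma_j)^2\big)\gtrsim_\alpha (1+\xi^2)^4$. The two arguments are the same estimate in different packaging --- Hardy's inequality in the conjugated form used by the paper is precisely the statement that $|i\xi+\alpha-\gamma_j|\gtrsim 1+|\xi|$ for $\alpha\ne\gamma_j$ --- but your version dispenses with the iteration and is arguably cleaner. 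Your preliminary reduction via $D^jx^{-1}=x^{-1}(D-1)^j$ and the change of basis between $\{(D-1)^j\}_{j\le k-2}$ and $\{D^j\}_{j\le k-2}$ is fine (the Plancherel step is an equivalence of norms rather than an equality, but this is harmless). As in the paper, the computation is valid for $u\in C^\infty_\mathrm{c}((0,\infty);\K)$ and extends by density to $H^\K_{k-2,\alpha-1}\cap H^\K_{k+2,\alpha}$, where a simultaneous truncation and mollification does converge in both norms.

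The genuine gap is in your final paragraph. The approximation of a general $v\in D(A)$, with $D(A)$ the maximal domain \eqref{a-op-domain}, by test functions in the graph norm is not routine --- it is impossible in general, because the maximal domain is strictly larger than $H^\K_{k-2,\alpha-1}\cap H^\K_{k+2,\alpha}$ whenever a root of $p$ lies in $(\alpha-1,\alpha)$. Concretely, take $\chi\in C^\infty_\mathrm{c}([0,1))$ with $\chi\equiv 1$ near $0$ and a root $\gamma_j$ with $\alpha-1<\gamma_j<\alpha$ (for $n=2$ and $\alpha=-0.9$, which lies in the coercivity range, both $\gamma_1=-\tfrac32$ and $\gamma_2$ qualify). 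Then $v:=\chi(x)\,x^{\gamma_j}$ satisfies $\verti{v}_{k-2,\alpha-1}<\infty$ and $Av\in C^\infty_\mathrm{c}((0,\infty))$ (since $p(D)x^{\gamma_j}=0$ where $\chi\equiv 1$), so $v\in D(A)$; but $\verti{v}_{k+2,\alpha}=\infty$ because $\gamma_j<\alpha$. Hence \eqref{eq_ineq_hardy} fails on the maximal domain, and no sequence $v_n\in C^\infty_\mathrm{c}$ can satisfy $v_n\to v$ and $Av_n\to Av$ in $H_{k-2,\alpha-1}$; indeed the commutator $x^{-1}[p(D),\rho(x/\eps)]x^{\gamma_j}$ produced by truncating at scale $\eps$ has $\verti{\cdot}_{k-2,\alpha-1}$-norm of order $\eps^{\gamma_j-\alpha}\to\infty$. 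To be fair, the paper's own proof is silent on this point: it only establishes the estimate on the intersection space, and the later results (e.g.\ Proposition~\ref{prop_MR}) take $H_{k-2,\alpha-1}\cap H_{k+2,\alpha}$ as the \emph{definition} of the domain, so nothing downstream depends on the maximal-domain identification. You should do the same: prove \eqref{eq_ineq_hardy} for $u\in H^\K_{k-2,\alpha-1}\cap H^\K_{k+2,\alpha}$ (which your Fourier argument does) and not attempt to show that this intersection exhausts the set \eqref{a-op-domain}.
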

\begin{proof}
We prove \eqref{eq_ineq_hardy} by applying Hardy's inequality \cite[Theorem~1]{Masmoudi2011} iteratively for $u\in C_\mathrm{c}^{\infty}((0,\infty);\K)$. Note that $\mathfrak p(D)$ is a fourth-order polynomial which we will rewrite as $\mathfrak p(D)=(D-\gamma)\Tilde{\mathfrak p}(D)$, where $\gamma$ is one of the zeros of $\mathfrak p(D)$ and $\Tilde{\mathfrak p}(D)$ is the remaining third-order polynomial. Since  we have $\gamma \ne \alpha$, by application of Hardy's inequality we get
\begin{align*}
    \verti{Au}_{k-2,\alpm}^2 &= \verti{x^{-1+\gamma} D x^{-\gamma} \tilde{\mathfrak p}(D)u}^2_{k-2,\alpm} \\
    &\sim_{k,\alpha} \verti{D x^{-\gamma} \tilde{\mathfrak p}(D)u}^2_{k-2,\alpha-\gamma} \\
    &= \sum_{j=0}^{k-2}\int_0^{\infty}x^{2\gamma-2\alpha}(D^{j+1} x^{-\gamma}\Tilde{\mathfrak p}(D)u)^2\tfrac{\d x}{x}\\
    &\gtrsim \sum_{j=0}^{k-2}\int_0^{\infty}x^{2\gamma-2\alpha}( D^j x^{-\gamma}\Tilde{\mathfrak p}(D)u)^2\tfrac{\d x}{x}\\
    &= \sum_{j=0}^{k-2}\int_0^{\infty}x^{-2\alpha}( (D-\gamma)^j \Tilde{\mathfrak p}(D)u)^2\tfrac{\d x}{x} \sim_{k,\alpha} \verti{\Tilde{\mathfrak p}(D)u}_{k-2,\alp}^2.
\end{align*}
We also have
\begin{equation*}
    \verti{Au}_{k-2,\alpm} \sim_{k,\alpha} \verti{(D-\gamma)\Tilde{\mathfrak p}(D)}_{k-2,\alp}\geq\verti{D\Tilde{\mathfrak p}(D)u}_{k-2,\alp}-|\gamma|\verti{\Tilde{\mathfrak p}(D)u}_{k-2,\alp},
\end{equation*}
and combining the two inequalities gives
\begin{align*}
\verti{Au}_{k-2,\alpm} \gtrsim_{k,\alpha} \verti{\Tilde{\mathfrak p}(D)u}_{k-1,\alp}.
\end{align*}
Repeating this argument three more times gives the desired estimate \eqref{eq_ineq_hardy}. Since trivially $\verti{Au}_{k-2,\alpm} \lesssim_{k,\alpha} \verti{u}_{k+2,\alpha}$, by density of $C_\mathrm{c}^{\infty}((0,\infty);\K)$ in $H_{k-2,\alpm}^\K \cap H_{k+2,\alp}^\K$ we obtain \eqref{eq_ineq_hardy} also for $u \in H_{k-2,\alpm}^\K \cap H_{k+2,\alp}^\K$.
\end{proof}

The Cauchy problem \eqref{eq_full_cauchy} has the formal mild solution
\begin{equation*}
    u(t)=e^{tA}u^{(0)}+\int_0^t e^{(t-s)A}f(s)\d s,
\end{equation*}
which suggests to split \eqref{eq_full_cauchy} into
\begin{subequations}\label{eq_cauchy,u(0)neq0}
\begin{align}
    \partial_t u^{(1)} -A u^{(1)} = 0, \label{hom_eq}\\
    u^{(1)}(0)=u^{(0)},
\end{align}
\end{subequations}
with formal solution $u^{(1)}(t)=e^{tA}u^{(0)}$, and
\begin{subequations}\label{eq_cauchy,f}
\begin{align}
    \partial_t u^{(2)}-Au^{(2)}=f,\label{pde_u2}\\
    u^{(2)}(0)=0,
\end{align}
\end{subequations}
with formal mild solution $u^{(2)}(t)=\int_0^t e^{(t-s)A}f(s)\d s$.

\medskip

The goal is to prove a maximal-regularity estimate for the linear problem \eqref{eq_full_cauchy}. To achieve this, the two problems \eqref{eq_cauchy,u(0)neq0} and \eqref{eq_cauchy,f} will be dealt with separately. The corresponding resolvent problem of \eqref{eq_cauchy,f} will be treated in \S\ref{sec_inhomogeneous_equation} and problem \eqref{eq_cauchy,u(0)neq0} is treated in \S\ref{sec_homogeneous_equation} with standard semi-group and interpolation theory. In \S\ref{sec_Linear_Maximal_Reg_est} the results will be combined to obtain a maximal $L^p$-regularity estimate. The section is concluded in \S\ref{sec_higher_regularity}, in which elliptic regularity estimates yield control on the singular expansion of $u$.

\subsection{Inhomogeneous Equation}\label{sec_inhomogeneous_equation}
Applying the Laplace transform in time to \eqref{eq_full_cauchy} and absorbing the Laplace-transformed initial data $u^{(0)}$ into $f$, we obtain the resolvent equation
\begin{equation}\label{eq_resolvent_equation}
\lambda u -Au = f,
\end{equation}
with $\lambda \in \{z\in \C\; |\; \Re z\geq 0\}$ and $A \stackrel{\eqref{a-op-pd}}{=} -x^{-1}\mathfrak p(D)$, with the fourth-order polynomial $p$. We test \eqref{eq_resolvent_equation} in $(\cdot,\cdot)_{\alpha}$ with
\begin{equation}\label{eq_test function}
\phi := \sum_{j=0}^k c_j(-D+2\alpha -1)^j D^j v,\qquad c_j>0,
\end{equation}
where $v \in C_\mathrm{c}^{\infty}((0,\infty);\C)$. For $w \in C^\infty_\mathrm{c}((0,\infty);\C)$ we define the bilinear form
\begin{align}
    B_{\lambda,k,\alpha}(v,w) &:= \left(\phi, \lambda w - A w\right)_{\alpha-\frac 1 2} \nonumber \\
    &\phantom{:}= \sum_{j=0}^k c_j \big((-D+2\alpha-1)^jD^j v,\lambda w - A w\big)_{\alpha-\frac 1 2}, \label{def-bkalpha}
\end{align}
where $B_{\lambda,k,\alpha}\colon \mathscr{H}_{k,\alpha}\times\mathscr{H}_{k,\alpha}\to \C$.  The choice of the test function $\phi$ in \eqref{eq_test function} is motivated by looking for solutions with control on $k+2$ $D$-derivatives and the shift by $1-2\alpha$ ensures symmetry of the term proportional to $\lambda$ in $B_{\lambda,k,\alpha}(v,w)$ when integrating by parts. The constants $c_j$ are suitably chosen below to obtain coercivity. Our goal is to show that the resolvent equation has a unique classical solution and that $A \stackrel{\eqref{a-op-pd}}{=} -x^{-1}\mathfrak p(D)$ generates an analytic semigroup. We start by showing that the resolvent equation \eqref{eq_resolvent_equation} has a unique weak solution by using the Lax-Milgram theorem (see e.g. \cite[Theorem~6.2]{Alt}).
\begin{lemma}[cf.~\cite{GGKO}]\label{lem_coercive}
The bilinear form
\[
C^\infty_\mathrm{c}((0,\infty);\C) \times C^\infty_\mathrm{c}((0,\infty);\C) \owns (v,w) \mapsto (v, \mathfrak p(D)w)_{\alpha}
\]
is coercive with respect to $\verti{\cdot}_{\alpha}$, i.e., $\Re(w,\mathfrak p(D)w)_{\alp} \gtrsim_\alpha \verti{w}_{2,\alp}^2$ for $w \in C_\mathrm{c}^\infty((0,\infty))$, if the following conditions hold:
\begin{subequations}\label{conditions_coerc_range}
\begin{align}
    \alpha \in (-\infty,\gamma_1)\cup (\gamma_2,\gamma_3)\cup(\gamma_4,\infty)\label{eq_condition_coerc_1},\\
    |\alpha-m(\gamma)|\leq \tfrac{1}{\sqrt{3}}\sigma(\gamma).\label{eq_condition_coerc_2}
\end{align}
\end{subequations}
Here, $m(\gamma)$ denotes the algebraic mean of the zeros $\gamma_l$ of $\mathfrak p(D)$, i.e.,
\begin{equation*}
    m(\gamma) := \tfrac{1}{4}\sum_{l=1}^4 \gamma_l,
\end{equation*}
and $\sigma(\gamma)$ the nonnegative root of the variance
\begin{equation*}
    \sigma^2(\gamma) = \tfrac{1}{4}\sum_{l=1}^4 \gamma^2_l - m^2(\gamma) = \tfrac{1}{4}\sum_{l=1}^4 (\gamma_l - m(\gamma))^2.
\end{equation*}
\end{lemma}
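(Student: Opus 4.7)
My plan is to diagonalize the quadratic form $(w,p(D)w)_\alpha$ by passing to the Mellin/Fourier side. Substituting $x = e^s$ and setting $\tilde w(s) := e^{-\alpha s} w(e^s)$, the weighted inner product $(\cdot,\cdot)_\alpha$ reduces to the standard $L^2(\R;\C)$ product, and the conjugation identity $e^{-\alpha s}[(D-\gamma)w](e^s) = (\partial_s + \alpha - \gamma)\tilde w$ turns $p(D)$ into $\prod_{j=1}^4(\partial_s + \mu_j)$ with $\mu_j := \alpha - \gamma_j$. Applying the Fourier transform in $s$ (so $\partial_s \leftrightarrow i\xi$) together with Plancherel gives
\begin{equation*}
(w,p(D)w)_\alpha = \int_\R Q(\xi)\,\verti{\hat{\tilde w}(\xi)}^2\,\d\xi,\qquad Q(\xi) := \prod_{j=1}^4(i\xi + \mu_j),
\end{equation*}
while the same transform converts the target norm into $\verti{w}_{2,\alpha}^2 = \int_\R(1+\xi^2+\xi^4)\verti{\hat{\tilde w}(\xi)}^2\,\d\xi$. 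Hence coercivity is reduced to a uniform pointwise lower bound $\Re Q(\xi) \gtrsim_\alpha 1 + \xi^4$ on $\R$.

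Direct expansion yields $\Re Q(\xi) = \xi^4 - e_2(\mu)\xi^2 + e_4(\mu)$, where $e_k(\mu)$ is the $k$th elementary symmetric polynomial in $(\mu_1,\dots,\mu_4)$. Matching the factorization $p(\alpha+\eta) = \prod_j(\eta+\mu_j)$ against the Taylor expansion $p(\alpha+\eta) = \sum_{k=0}^{4}\tfrac{p^{(k)}(\alpha)}{k!}\eta^k$ identifies $e_4(\mu) = p(\alpha)$ and $e_2(\mu) = p''(\alpha)/2$. Since $p$ has positive leading coefficient and simple roots $\gamma_1 < \gamma_2 < \gamma_3 < \gamma_4$, condition \eqref{eq_condition_coerc_1} is precisely $p(\alpha) > 0$, i.e.\ $e_4(\mu) > 0$. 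For $e_2(\mu)$, the Newton identity $e_2(\mu) = \tfrac{1}{2}\bigl[(\sum_j\mu_j)^2 - \sum_j\mu_j^2\bigr]$ together with $\sum_j\mu_j = 4(\alpha - m(\gamma))$ and $\sum_j\mu_j^2 = 4[(\alpha-m(\gamma))^2 + \sigma^2(\gamma)]$ gives the clean expression
\begin{equation*}
e_2(\mu) = 6(\alpha - m(\gamma))^2 - 2\sigma^2(\gamma),
\end{equation*}
so condition \eqref{eq_condition_coerc_2} is equivalent to $e_2(\mu) \le 0$.

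Putting the two pieces together, under both \eqref{eq_condition_coerc_1} and \eqref{eq_condition_coerc_2} we have
\begin{equation*}
\Re Q(\xi) = \xi^4 + (-e_2(\mu))\xi^2 + p(\alpha) \ge \xi^4 + p(\alpha) \ge \min(1,p(\alpha))(1+\xi^4) \gtrsim_\alpha 1 + \xi^2 + \xi^4
\end{equation*}
uniformly in $\xi \in \R$, and integrating against $\verti{\hat{\tilde w}}^2$ yields $(w,p(D)w)_\alpha \gtrsim_\alpha \verti{w}_{2,\alpha}^2$. The argument is essentially mechanical once the Mellin-side reformulation is in place; the only genuinely informative step is the identification $e_2(\mu) = 6(\alpha-m)^2 - 2\sigma^2$, which explains why the bound $|\alpha - m(\gamma)| \le \sigma(\gamma)/\sqrt{3}$ in \eqref{eq_condition_coerc_2} appears: it is exactly the threshold at which the real symbol $\xi \mapsto \Re Q(\xi)$ is monotone increasing in $\xi^2$, so that the coercive bound follows without any further manipulation.
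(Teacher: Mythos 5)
Your proposal is correct and follows exactly the route the paper indicates (and delegates to \cite{GGKO}): commuting the weight with $D$, passing to $s=\ln x$, and applying Fourier--Plancherel, after which the conditions \eqref{eq_condition_coerc_1} and \eqref{eq_condition_coerc_2} are identified as $e_4(\mu)=p(\alpha)>0$ and $e_2(\mu)\le 0$ for the shifted roots $\mu_j=\alpha-\gamma_j$. The only cosmetic imprecisions are that $(w,p(D)w)_\alpha$ equals $\int\overline{Q(\xi)}\verti{\hat{\tilde w}}^2\,\d\xi$ (harmless since only $\Re Q$ is used, and for real-valued $w$ the evenness of $\verti{\hat{\tilde w}}^2$ kills the imaginary part) and that $\verti{w}_{2,\alpha}^2$ equals $\int(1+(\xi^2+\alpha^2)+(\xi^2+\alpha^2)^2)\verti{\hat{\tilde w}}^2\,\d\xi$, which is only \emph{equivalent} to $\int(1+\xi^2+\xi^4)\verti{\hat{\tilde w}}^2\,\d\xi$ up to $\alpha$-dependent constants --- exactly what the $\gtrsim_\alpha$ in the conclusion absorbs.
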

\begin{proof} This is a straight-forward computation, see \cite[Proposition~5.3]{GGKO}.
\end{proof}
Note that (as one can see from the proof in \cite[Proposition~5.3]{GGKO}), the criterion \eqref{conditions_coerc_range} is sufficient but in general not necessary to prove coercivity, see the discussion in \S\ref{sec-conclusion}.

\medskip

The above lemma amounts in our case to:
\begin{proof}[Proof of Lemma~\ref{lem_coercive_range}]
First consider $n\in (1,\tfrac{3}{2})$. By \eqref{eq_zeros_1} the zeros of $\mathfrak p(D)$ are in ascending order given by
\[
\gamma_1 = -\tfrac{1}{2-n}, \quad \gamma_2 = \tfrac{1-2n}{2(2-n)}, \quad \gamma_3 = 0, \quad \gamma_4 = \tfrac{3-2n}{2(2-n)}.
\]
Hence, condition~\eqref{eq_condition_coerc_1} gives
\begin{equation*}
    \alp \in \big(-\infty,-\tfrac{1}{2-n}\big)\cup \big(\tfrac{1-2n}{2(2-n)},0\big)\cup \big(\tfrac{3-2n}{2(2-n)},\infty\big).
\end{equation*}
Furthermore, an elementary calculation shows
\begin{equation*}
    m(\gamma) = \tfrac{1-2n}{4(2-n)}\quad \text{ and }\quad
    \sigma^2(\gamma) = \tfrac{13-12n+4n^2}{16(2-n)^2}.
\end{equation*}
Thus \eqref{eq_condition_coerc_2} reads in the case at hand
\begin{equation}\label{eq_coer_condition2_n_1_32}
    \big|\alp - \tfrac{1-2n}{4(2-n)}\big|\leq \tfrac{1}{\sqrt{3}} \tfrac{\sqrt{13-12n+4n^2}}{4 (2-n)}.
\end{equation}
By combining the above criteria, we obtain that the coercivity range contains \eqref{coerc_range_n132}, see Figure~\ref{fig:coercivity_range_a}.

\medskip

For the case $n\in(\tfrac{3}{2},3)$ we have the zeros \eqref{eq_zeros_2}
\begin{equation*}
\gamma_1=-\tfrac{3}{n},\quad \gamma_2= \tfrac{4n-9-\sqrt{-27+36n-8n^2}}{2n} ,\quad \gamma_3=0,\quad \gamma_4= \tfrac{4n-9+\sqrt{-27+36n-8n^2}}{2n},
\end{equation*}
ordered from smallest to largest. Hence, to satisfy \eqref{eq_condition_coerc_1} we get the condition
\begin{equation*}
    \alp \in \big(-\infty,-\tfrac{3}{n}\big)\cup (\gamma_2,0)\cup (\gamma_4,\infty),
\end{equation*}
and
\begin{equation*}
    m(\gamma) = \tfrac{n-3}{n}\quad \text{ and }\quad
    \sigma^2(\gamma) = \tfrac{3}{2n},
\end{equation*}
which gives the second constraint \eqref{eq_condition_coerc_2}, that is,
\begin{equation}\label{eq_coer_condition2_n_32_3}
    \big|\alpha-\tfrac{n-3}{n}\big|\leq \tfrac{1}{\sqrt{2n}}.
\end{equation}
We thus need to take \eqref{coerc_range_n323} to fulfill both conditions, see Figure~\ref{fig:coercivity_range_b}.
\end{proof}
\begin{lemma}\label{lem_bdd+coercivity}
For $\alp$ in the coercivity range of $\mathfrak p(D)$ (cf.~\eqref{coerc_range} of Lemma~\ref{lem_coercive_range} for a sufficient criterion) and $K_\alpha$ as in \eqref{coercivity-pd}, there exist $d_{j,\alpha} \in (0,\infty)$, $j = 0,\ldots,k$, independent of $\boldsymbol{c} := (c_j)_{j = 1}^k$, such that if
\begin{equation}\label{constraint-cj}
c_j \ge \tfrac{2}{K_\alpha} \Big(1+\sum_{\ell = j+1}^k c_\ell d_{\ell,\alpha}\Big) \quad \text{for $j = 0,\ldots,k$},
\end{equation}
and if $\lambda \in \C$ with $\Re\lambda \ge 0$, the problem
\begin{equation}\label{lax-milgram-eq}
B_{\lambda,k,\alpha}(v,u) = \langle v, \overline f \rangle_{H_{k+2,\alpha}^\C \times H_{k-2,\alpha-1}^\C} \quad \text{for all } v \in \mathscr{H}_{k,\alpha}
\end{equation}
has for all $f \in H_{k-2,\alpha-1}^\C$ a unique solution $u \in \mathscr{H}_{k,\alpha}$ if $\lambda \ne 0$ and $u \in H_{k+2,\alpha}^\C$ if $\lambda = 0$. This solution satisfies the estimate
\begin{equation}\label{eq_strong_coerc_estimate}
    \verti{\lambda} \verti{u}_{k,\alpha-\frac 1 2}^2 + \verti{u}^2_{k+2,\alp} \lesssim_{\boldsymbol{c},k,\alpha} \verti{f}^2_{k-2,\alpha-1}.
\end{equation}
\end{lemma}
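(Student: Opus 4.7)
The strategy is to apply the Lax--Milgram theorem on $V := \mathscr{H}_{k,\alpha}$ (respectively $V := H_{k+2,\alpha}^{\C}$ when $\lambda = 0$) endowed with the $\lambda$-weighted squared norm $\verti{w}_V^2 := |\lambda|\verti{w}_{k,\alpha-\frac 1 2}^2 + \verti{w}_{k+2,\alpha}^2$. Boundedness of $B_{\lambda,k,\alpha}$ in the form $|B_{\lambda,k,\alpha}(v,w)| \lesssim_{\boldsymbol{c}, k, \alpha} \verti{v}_V \verti{w}_V$ is immediate from Lemma~\ref{lem_ineq_hardy} and the structure of the test function \eqref{eq_test function}, and the right-hand side $v \mapsto \langle v, \bar f \rangle$ is bounded by the duality pairing. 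Thus the entire content of the lemma reduces to a coercivity estimate for $B_{\lambda,k,\alpha}$ that is quantitatively linear in $|\lambda|$ and holds uniformly for $\Re\lambda \ge 0$.

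The crucial algebraic step is to unroll the test function via the adjoint identity $((-D + 2\alpha - 1) u, v)_{\alpha - \frac 1 2} = (u, D v)_{\alpha - \frac 1 2}$, the commutator $D^j x^{-1} = x^{-1} (D-1)^j$, and the weight-shift $(\cdot, x^{-1} \cdot)_{\alpha - \frac 1 2} = (\cdot, \cdot)_\alpha$. Using in addition $[p(D), (D-1)^j] = 0$, this gives
\begin{equation*}
B_{\lambda, k, \alpha}(w, w) = \lambda \sum_{j = 0}^k c_j \verti{D^j w}_{\alpha - \frac 1 2}^2 + \sum_{j = 0}^k c_j \big(D^j w, p(D) (D-1)^j w\big)_{\alpha}.
\end{equation*}
Expanding $(D-1)^j = D^j + R_j(D)$ with $\deg R_j \le j - 1$, Lemma~\ref{lem_coercive_range} applied to $D^j w$ gives $\Re (D^j w, p(D) D^j w)_\alpha \ge K_\alpha \verti{D^j w}_{2,\alpha}^2$; summing and comparing coefficients in front of each $\verti{D^m w}_\alpha^2$ for $0 \le m \le k + 2$ yields a lower bound $\gtrsim c_k \verti{w}_{k+2,\alpha}^2$ for the main diagonal contribution. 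The off-diagonal remainders $c_j (D^j w, p(D) R_j(D) w)_\alpha$ are estimated by redistributing one derivative via the adjoint of $D$ in $(\cdot, \cdot)_\alpha$ followed by a weighted Cauchy--Schwarz and Young inequality, splitting each into a top-order piece reabsorbed in the $j$-th main term and a strictly-lower-order remainder bounded by $d_{j,\alpha} \verti{w}_{j+1,\alpha}^2$. The latter is in turn dominated by the $(j - 1)$-th main term $c_{j-1} K_\alpha \verti{D^{j-1} w}_{2,\alpha}^2$ once the $c_j$ are chosen according to the downward-triangular constraint \eqref{constraint-cj}, which is precisely the quantitative statement that each lower-index main term controls the descending tail $\sum_{\ell > j} c_\ell d_{\ell, \alpha}$ of remainders.

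This produces $\Re B_{\lambda,k,\alpha}(w,w) \gtrsim_{\boldsymbol{c}, k, \alpha} \Re\lambda \verti{w}_{k,\alpha-\frac 1 2}^2 + \verti{w}_{k+2,\alpha}^2$. To upgrade $\Re\lambda$ to $|\lambda|$, I combine this with the imaginary-part identity $\Im B_{\lambda,k,\alpha}(w,w) = \Im\lambda \sum_{j} c_j \verti{D^j w}_{\alpha-\frac 1 2}^2 + \Im(\text{elliptic part})$, in which the elliptic piece is bounded by $C_{\boldsymbol{c}, k, \alpha} \verti{w}_{k+2,\alpha}^2$ from the boundedness analysis. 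Hence $|\Im\lambda| \verti{w}_{k,\alpha-\frac 1 2}^2 \lesssim |B_{\lambda,k,\alpha}(w,w)| + \verti{w}_{k+2,\alpha}^2$, and adding to the real-part bound produces $|\lambda| \verti{w}_{k,\alpha-\frac 1 2}^2 + \verti{w}_{k+2,\alpha}^2 \lesssim_{\boldsymbol{c}, k, \alpha} |B_{\lambda,k,\alpha}(w,w)|$. Lax--Milgram applied to this coercive and bounded bilinear form on $V$ yields the unique solution of \eqref{lax-milgram-eq}, and testing the variational equation with $v = u$ combined with $|B_{\lambda,k,\alpha}(u,u)| = |\langle u, \bar f\rangle| \le \verti{u}_{k+2,\alpha} \verti{f}_{k-2,\alpha-1}$ and Young's inequality delivers the quantitative estimate \eqref{eq_strong_coerc_estimate}.

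The main obstacle I anticipate is the systematic bookkeeping of the off-diagonal remainders generated by $(D-1)^j - D^j$ and by commuting the weight $x^{-1}$ past differential operators: one must verify that every such remainder, after a suitable integration-by-parts redistribution of derivatives, splits into a piece controlled by the same-index main term plus a strictly-lower-index piece, with constants $d_{j,\alpha}$ that exactly match the recursion \eqref{constraint-cj} so that the downward absorption closes on choosing $c_j$ sufficiently large.
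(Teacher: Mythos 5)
Your proposal follows essentially the same route as the paper's proof: the same integration-by-parts rewriting of $B_{\lambda,k,\alpha}(w,w)$ into $\lambda\sum_j c_j\verti{D^jw}_{\alpha-\frac12}^2+\sum_j c_j(D^jw,p(D)(D-1)^jw)_\alpha$, the same expansion $(D-1)^j=D^j+\text{lower order}$ with the coercivity of $p(D)$ on the diagonal and Young absorption of the off-diagonal remainders into the descending recursion \eqref{constraint-cj}, the same real/imaginary-part combination to upgrade $\Re\lambda$ to $\verti{\lambda}$, and the same Lax--Milgram conclusion with $v=u$. The only (harmless) cosmetic difference is that you phrase boundedness with respect to the $\lambda$-weighted norm, making the constant $\lambda$-uniform, whereas the paper uses the plain $\mathscr{H}_{k,\alpha}$-norm with a $\lambda$-dependent constant.
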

\begin{proof}
We assume that $v,w \in C_\mathrm{c}^{\infty}((0,\infty);\C)$ and prove boundedness and coercivity, enabling us to apply the Lax-Milgram theorem. We first note that through integration by parts it follows
\begin{align}
B_{\lambda,k,\alpha}(v,w) \ \ \quad &\stackrel{\mathclap{\eqref{a-op-pd},\eqref{def-bkalpha}}}{=} \quad \ \ \sum_{j=0}^k c_j \lambda((-D+2\alpha-1)^jD^jv,w)_{\alpha-\frac{1}{2}} \nonumber \\
&\phantom{=} \quad \ \ + \sum_{j=0}^k c_j ((-D+2\alpha-1)^jD^jv, \mathfrak p(D) w)_\alpha \nonumber \\
    &\stackrel{\mathclap{\eqref{eq_def_p(D)}}}{=} \quad \ \ \sum_{j=0}^k c_j \lambda(D^j v,D^j w)_{\alpha-\frac{1}{2}} \nonumber \\
    &\phantom{=} \quad \ \ + \sum_{j=0}^k c_j (D^j (D-2\alpha+\gamma_1) (D-2\alpha+\gamma_2) v, (D-1)^j (D-\gamma_3) (D-\gamma_4) w)_\alpha. \label{rewrite-blka}
\end{align}
\medskip

\textbf{Boundedness.} Using the Cauchy-Schwarz inequality it follows from \eqref{rewrite-blka} that
\begin{align}\label{lax-milgram-bound}
    |B_{\lambda, k, \alpha}(v,w) |&\lesssim_{\mathbf c, \boldsymbol\gamma} |\lambda| \verti{v}_{k,\alpha-\tfrac{1}{2}}\verti{w}_{k,\alpha-\tfrac{1}{2}}+\verti{v}_{k+2,\alp}\verti{w}_{k+2,\alp} \stackrel{\eqref{script-hka}}{\lesssim} \verti{v}_{\mathscr{H}_{k,\alpha}}\verti{w}_{\mathscr{H}_{k,\alpha}},
\end{align}
where $\boldsymbol\gamma := (\gamma_j)_{j = 1}^4$.

\medskip

\textbf{Coercivity.} For $v = w$ we obtain from \eqref{rewrite-blka} that
\begin{equation}\label{coercivity-bil}
\Re(B_{\lambda, k,\alpha}(v,v)) = \sum_{j=0}^k c_j \Re\lambda \verti{D^j v}_{\alpha-\frac{1}{2}}^2 + \sum_{j=0}^k c_j \Re\big((D^j v, \mathfrak p(D) (D-1)^j v)_\alpha\big).
\end{equation}
We notice that because of $c_j > 0$ for all $j = 0,\ldots,k$ we have
\begin{equation}\label{coerc-proof-1}
\sum_{j=0}^k c_j \Re\lambda \verti{D^j v}_{\alpha-\frac{1}{2}}^2 \gtrsim_{\boldsymbol{c},k} \Re\lambda \verti{v}_{k,\alpha-\frac 1 2}^2.
\end{equation}
For the last term in \eqref{coercivity-bil} integration by parts and coercivity (see Lemma~\ref{lem_coercive_range}) entail
\begin{align*}
    &\sum_{j=0}^k c_j\left(D^j v, \mathfrak p(D) (D-1)^j v\right)_{\alpha} \\
    & \quad = \ \sum_{j = 0}^k c_j (D^j v, \mathfrak p(D) D^j v)_{\alp} + \sum_{j = 0}^k \sum_{l=0}^{j-1} c_j (-1)^{j-l}{j \choose l} (D^j v, \mathfrak p(D) D^l v)_{\alp} \\
    & \quad \stackrel{\mathclap{\eqref{coercivity-pd}}}{\geq} \ \sum_{j = 0}^k c_j K_\alpha \verti{D^j v}^2_{2,\alp} \\
    & \quad \phantom{\le} \  - \sum_{j = 0}^k  \sum_{l=0}^{j-1} c_j {j \choose l} |(D^j (D-2\alpha+\gamma_1) (D-2\alpha+\gamma_2) v, D^l (D-\gamma_3) (D-\gamma_4) v)_{\alp}|,
\end{align*}
where $K_\alpha$ is the coercivity constant in \eqref{coercivity-pd}. By Young's inequality we infer that there exist $d_{j,\alpha} < \infty$ for $j = 1,\ldots,k-1$ such that
\begin{align*}
& \sum_{l = 0}^{j-1} {j \choose l} |(D^j (D-2\alpha+\gamma_1) (D-2\alpha+\gamma_2) v, D^l (D-\gamma_3) (D-\gamma_4) v)_{\alp}| \\
& \quad \le \tfrac{c_j K_\alpha}{2} \verti{D^j v}_{2,\alpha}^2 + c_j d_{j,\alpha} \verti{v}_{j+1,\alpha}^2,
\end{align*}
where $d_{j,\alpha}$ is independent of $c_j$. This entails
\[
\sum_{j=0}^k c_j\left(D^j v, \mathfrak p(D) (D-1)^j v\right)_{\alpha} \ge \sum_{j = 0}^k \tfrac{c_j K_\alpha}{2} \verti{D^j v}^2_{2,\alp} - \sum_{j = 1}^k c_j d_{j,\alpha} \verti{v}_{j+1,\alpha}^2.
\]
We define inductively in descending order in $j \in \{0,\ldots,k\}$ the coefficients $c_j$ meeting \eqref{constraint-cj}, so that
\begin{equation}\label{coerc-proof-2}
\sum_{j=0}^k c_j\left(D^j v, \mathfrak p(D) (D-1)^j v\right)_{\alpha} \ge \verti{v}_{k+2,\alpha}^2.
\end{equation}
Estimates~\eqref{coerc-proof-1} and \eqref{coerc-proof-2} in \eqref{coercivity-bil} entail
\begin{equation}\label{coerc_est_re}
\Re(B_{\lambda, k,\alpha}(v,v))
\gtrsim_{\boldsymbol{c},k} \Re \lambda \verti{v}^2_{k,\alpha-\frac{1}{2}} + \verti{v}^2_{k+2,\alp},
\end{equation}
where $\mathbf{c}$ depends on $\alpha$. Furthermore,
\begin{align}
\verti{\Im\lambda} \verti{v}^2_{k,\alpha-\frac{1}{2}} &\lesssim_{\boldsymbol{c}} \sum_{j=0}^k c_j \verti{\Im\lambda} \verti{D^j v}_{\alpha-\frac{1}{2}}^2 \nonumber \\
&\stackrel{\mathclap{\eqref{a-op-pd},\eqref{lax-milgram-bound}}}{\le} \quad \ \ \verti{\Im(B_{\lambda, k,\alpha}(v,v))} + \sum_{j=0}^k c_j \verti{\Im\big((D^j v, \mathfrak p(D) (D-1)^j v)_\alpha\big)} \nonumber \\
&\stackrel{\mathclap{\eqref{coerc_est_re}}}{\lesssim_{\boldsymbol{c},k}} \verti{B_{\lambda, k,\alpha}(v,v)} \label{coerc_est_im}
\end{align}
Estimates~\eqref{coerc_est_re} and \eqref{coerc_est_im} entail
\begin{equation}\label{coerc_est_full}
\verti{B_{\lambda, k,\alpha}(v,v)}
\gtrsim_{\boldsymbol{c},k} \verti{\lambda} \verti{v}^2_{k,\alpha-\frac{1}{2}} + \verti{v}^2_{k+2,\alp}.
\end{equation}
\medskip

\textbf{Lax-Milgram Solution.} From \eqref{lax-milgram-bound} and \eqref{coerc_est_full} we obtain a unique solution $u \in \mathscr{H}_{k,\alpha}$ if $\lambda \ne 0$ and  $u \in H_{k+2,\alpha}^\C$ if $\lambda = 0$ of \eqref{lax-milgram-eq}. Setting $u = v$ in \eqref{lax-milgram-eq} and using \eqref{coerc_est_full} yields \eqref{eq_strong_coerc_estimate}.
\end{proof}
\begin{lemma}\label{lemma_density}
The set
\begin{equation*}
    \bigg\{\sum_{j=0}^k c_j (-D+2\alpha-1)^j D^j v|\; v\in C_\mathrm{c}^{\infty}((0,\infty);\C)\bigg\}
\end{equation*}
is dense in $H_{\alpha-\frac 1 2}^\C$ if
\begin{equation}\label{cond-cj-2}
c_j > - \sum_{j<m\leq k}c_m {m \choose 2(m-j)}(-1)^{m-j}(2\alpha-1)^{2(m-j)} \quad \text{for all $j = 0,\ldots,k$.}
\end{equation}
\end{lemma}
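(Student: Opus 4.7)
The plan is to prove density by duality combined with a Fourier-analytic reduction to a polynomial non-vanishing question on the real line. First I would observe that under the inner product $(\cdot,\cdot)_{\alpha-\frac{1}{2}}$ the operator $D$ has adjoint $D^{*} = -D + (2\alpha-1)$, as one verifies by integrating $\int_0^\infty x^{-2\alpha}(D\phi)\overline{\psi}\,\d x$ by parts. Since $D$ commutes with $-D+2\alpha-1$, every building block $(-D+2\alpha-1)^{j}D^{j}$ coincides with $(D^{*})^{j}D^{j}$ and is self-adjoint with respect to $(\cdot,\cdot)_{\alpha-\frac{1}{2}}$, and so is the full operator $\mathcal{P} := \sum_{j=0}^{k}c_{j}(-D+2\alpha-1)^{j}D^{j}$. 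Density of $\mathcal{P}(C_{\mathrm{c}}^{\infty}((0,\infty);\C))$ in $H_{\alpha-\frac{1}{2}}^{\C}$ is therefore equivalent to showing that any $w\in H_{\alpha-\frac{1}{2}}^{\C}$ satisfying $\mathcal{P}w=0$ in $\cD'((0,\infty);\C)$ must vanish.

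Second, I would pass to the logarithmic variable $s=\ln x$ via the isometry $T\colon H_{\alpha-\frac{1}{2}}^{\C}\to L^{2}(\R;\C)$, $Tw(s):=e^{-(\alpha-\frac{1}{2})s}w(e^{s})$. A short computation yields $TDT^{-1}=(\alpha-\frac{1}{2})+\partial_{s}$ and $T(-D+2\alpha-1)T^{-1}=(\alpha-\frac{1}{2})-\partial_{s}$; since these two factors commute, one obtains $T\mathcal{P}T^{-1}=\sum_{j=0}^{k}c_{j}\big((\alpha-\tfrac{1}{2})^{2}-\partial_{s}^{2}\big)^{j}=:\tilde{\mathcal{P}}$, which is a constant-coefficient operator on $\R$. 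Passing to the Fourier side in $s$, $\tilde{\mathcal{P}}$ becomes multiplication by the polynomial symbol
\[
\hat{P}(\tau):=\sum_{j=0}^{k}c_{j}\big((\alpha-\tfrac{1}{2})^{2}+\tau^{2}\big)^{j}, \qquad \tau\in\R,
\]
of degree $2k$ in $\tau$ with leading coefficient $c_{k}$. The distributional identity $\mathcal{P}w=0$ thus translates into $\hat{P}(\tau)\,\widehat{Tw}(\tau)=0$ in $\cD'(\R;\C)$ with $\widehat{Tw}\in L^{2}(\R;\C)$.

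The only remaining input is that $\hat{P}$ is not the zero polynomial, which follows from \eqref{cond-cj-2} applied at $j=k$: the sum over $k<m\le k$ is empty, so the condition reduces to $c_{k}>0$. Hence $\hat{P}$ vanishes on at most a finite set $\cZ\subset\R$. For every $\psi\in C_{\mathrm{c}}^{\infty}(\R\setminus\cZ;\C)$ the quotient $\psi/\hat{P}$ lies in $C_{\mathrm{c}}^{\infty}(\R;\C)$, whence
\[
\langle\widehat{Tw},\psi\rangle = \langle\hat{P}\widehat{Tw},\psi/\hat{P}\rangle=0,
\]
so $\widehat{Tw}=0$ in $\cD'(\R\setminus\cZ;\C)$ and, because $\cZ$ has Lebesgue measure zero, a.e.\ on $\R$. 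Plancherel then forces $Tw=0$, and the isometry property yields $w=0$, establishing density. The only genuine subtlety is this division-by-a-polynomial step that transfers the vanishing of the product $\hat{P}\widehat{Tw}$ to $\widehat{Tw}$ itself; note that the argument only uses $c_{k}>0$, so the full strength of \eqref{cond-cj-2} is not needed for density alone and is presumably imposed for consistency with the coefficient conditions in Lemma~\ref{lem_bdd+coercivity}.
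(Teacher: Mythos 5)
Your proof is correct, and it takes a genuinely different route from the paper's. The paper argues forwards: after passing to logarithmic coordinates and Fourier transforming, it shows that multiplication by the symbol $\mathfrak p(\xi)=\sum_{j=0}^k c_j(-i\xi)^j(i\xi+2\alpha-1)^j$ is \emph{surjective} on $\mathcal S(\R)$, for which $\mathfrak p$ must have no real zeros; condition \eqref{cond-cj-2} is exactly what makes $\Re\mathfrak p(\xi)$ a polynomial in $\xi^2$ with positive coefficients, hence strictly positive on $\R$. You instead argue by duality, using the formal self-adjointness of $\sum_j c_j(-D+2\alpha-1)^jD^j$ with respect to $(\cdot,\cdot)_{\alpha-\frac{1}{2}}$ to identify the orthogonal complement of the range with the $L^2$-kernel of a constant-coefficient operator, which you then kill by a Fourier support argument. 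Two remarks. First, your symbol $\hat P(\tau)=\sum_j c_j\big((\alpha-\tfrac{1}{2})^2+\tau^2\big)^j$ and the paper's $\mathfrak p$ are related by the complex shift $\mathfrak p\big(\tau+i(\alpha-\tfrac{1}{2})\big)=\hat P(\tau)$: you conjugate with the isometry $w\mapsto e^{-(\alpha-\frac{1}{2})s}w(e^s)$ onto $L^2(\R;\C)$, whereas the paper commutes the weight $x^{-2\alpha+1}$ through the operator, so the two symbols live on different horizontal lines in the complex plane. Second, your closing observation is right: for density alone it suffices that the symbol be a nonzero polynomial (so $c_k>0$ is enough, and even a single nonvanishing $c_j$ would do), whereas the full condition \eqref{cond-cj-2} buys the stronger conclusion of surjectivity onto $\mathcal S(\R)$ in the paper's argument and, more importantly, is the set of constraints under which the coefficients are actually chosen in Lemma~\ref{lem_bdd+coercivity}. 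The only step worth spelling out slightly more in your write-up is the passage from $\mathcal P w=0$ in $\mathcal D'((0,\infty);\C)$ to an identity in $\mathcal S'(\R)$ so that the Fourier transform may be applied; this is immediate since $Tw\in L^2(\R;\C)\subset\mathcal S'(\R)$ and a tempered distribution vanishing on $C_\mathrm{c}^\infty(\R;\C)$ vanishes.
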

\begin{proof}
Note that by commuting with $x^{-2\alpha+1}$, the assertion of the lemma is equivalent to
\begin{equation*}
    \bigg\{\sum_{j=0}^k c_j (-D)^j(D+2\alpha-1)^j x^{-2\alpha+1} v|\; v \in C_\mathrm{c}^{\infty}((0,\infty);\C)\bigg\}
\end{equation*}
being dense in $L^2(\R;\C)$. After passing to $x^{-2\alpha+1} v(x) =\psi(e^s)$ with $s =\log x$, this is equivalent to
\begin{equation*}
    \bigg\{\sum_{j=0}^k c_j (-\partial_s)^j(\partial_s+2\alpha-1)^j \psi|\;\psi\in \mathcal{S}(\R)\bigg\}
\end{equation*}
being dense in $L^2(\R;\C)$. Plancherel's theorem entails that it is sufficient to show that
\begin{equation*}
    \bigg\{\sum_{j=0}^k c_j (-i\xi)^j(i\xi+2\alpha-1)^j \psi|\;\psi\in \mathcal{S}(\R)\bigg\}
\end{equation*}
is dense in $L^2(\R;\C)$. Therefore, we show that the mapping
\begin{equation}
   T \colon \mathcal S(\R) \to \mathcal S(\R), \qquad \psi\mapsto \sum_{j=0}^k c_j (-i\xi)^j(i\xi+2\alpha-1)^j \psi
\end{equation}
is surjective. For this, take $\tilde \psi \in \mathcal S(\R)$. Then it suffices to show that the polynomial
\[
\mathfrak q(\xi) := \sum_{j=0}^k c_j (-i\xi)^j(i \xi +2\alpha-1)^j
\]
has no real zeros in $\xi$ since $\psi(\xi) = \tilde{\psi}(\xi)/\mathfrak q(\xi)$ satisfies $T \psi = \tilde \psi$ and $\psi \in \SSS(\R)$. Indeed, we have
\begin{align*}
    \Re \sum_{j=0}^k c_j (-i\xi)^j(i \xi +2\alpha-1)^j &= \sum_{j=0}^k c_j \Re (\xi^2-i\xi(2\alpha -1))^j\\
    &= \sum_{j=0}^k c_j \sum_{l=0}^{\lfloor\tfrac{j}{2} \rfloor}{j \choose 2l}\xi^{2(j-l)}(-1)^l (2\alpha-1)^{2l}\\
    &=: d_0 + d_1 \xi^2 + d_2 \xi^4 +\dots + d_k \xi^{2k}.
\end{align*}
We find
\begin{align*}
    d_{j} = c_j +\sum_{j<m\leq k}c_m {m \choose 2(m-j)}(-1)^{m-j}(2\alpha-1)^{2(m-j)},
\end{align*}
which is positive if \eqref{cond-cj-2} holds true. It then follows that
\begin{equation*}
    d_0 + d_1 \xi^2 + d_2 \xi^4 +\dots + d_k \xi^{2k} >0 \quad \text{for all $\xi \in \R$,}
\end{equation*}
so that $\mathfrak q(\xi)$ has no real zeros.
\end{proof}
\begin{lemma}\label{lem-resolvent}
For $\alp$ in the coercivity range of $\mathfrak p(D)$ (cf.~\eqref{coerc_range} of Lemma~\ref{lem_coercive_range} for a sufficient criterion), $k \ge 2$, $K_\alpha$ as in \eqref{coercivity-pd}, and $\lambda \in \C$ with $\Re\lambda \ge 0$, the problem
\begin{equation}\label{resolvent-problem}
\lambda u - A u = f
\end{equation}
has for all $f \in H_{k-2,\alpha-1}^\C$ a unique strong solution $u \in H_{k-2,\alpha-1}^\C \cap \mathscr{H}_{k,\alpha}$ if $\lambda \ne 0$ and $u \in H_{k+2,\alpha}^\C$ if $\lambda = 0$. This solution satisfies the estimate
\begin{equation}\label{eq_strong_coerc_estimate_2}
   \verti{\lambda} \verti{u}_{k-2,\alpha -1}+ \verti{\lambda}^{\frac 1 2} \verti{u}_{k,\alpha-\frac 1 2} + \verti{u}_{k+2,\alp} \lesssim_{k,\alpha} \verti{f}_{k-2,\alpha-1}.
\end{equation}
\end{lemma}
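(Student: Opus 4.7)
The plan is to combine Lemma~\ref{lem_bdd+coercivity} and Lemma~\ref{lemma_density}: I first invoke the former to produce a weak solution to \eqref{resolvent-problem}, then use density of the test-function images together with Lemma~\ref{lem_ineq_hardy} to upgrade it to a strong solution. The quantitative bound \eqref{eq_strong_coerc_estimate_2} will be read off directly from \eqref{eq_strong_coerc_estimate} once the coefficients $\boldsymbol{c} = (c_j)_{j=0}^k$ have been frozen as a function of $k$ and $\alpha$ alone.

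The first step is to choose $c_0,\ldots,c_k > 0$ simultaneously satisfying the coercivity condition \eqref{constraint-cj} and the density condition \eqref{cond-cj-2}. Both are lower bounds on $c_j$ depending only on $c_{j+1},\ldots,c_k$ (and on $k,\alpha$), so a descending induction from $j=k$ down to $j=0$ produces an admissible $\boldsymbol{c}$ that is fully determined by $k$ and $\alpha$; this is the mechanism by which the $\boldsymbol{c}$-dependence in \eqref{eq_strong_coerc_estimate} collapses into a $(k,\alpha)$-dependence in \eqref{eq_strong_coerc_estimate_2}. With this $\boldsymbol{c}$ fixed, Lemma~\ref{lem_bdd+coercivity} delivers, for every $f \in H_{k-2,\alpha-1}^\C$, a unique $u \in \mathscr{H}_{k,\alpha}$ (respectively $u \in H_{k+2,\alpha}^\C$ when $\lambda=0$) with
\begin{equation*}
B_{\lambda,k,\alpha}(v,u) = \langle v, \overline{f}\rangle_{H_{k+2,\alpha}^\C \times H_{k-2,\alpha-1}^\C} \quad \text{for all } v \in \mathscr{H}_{k,\alpha},
\end{equation*}
and the estimate \eqref{eq_strong_coerc_estimate_2} is immediate.

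The second step is to promote this identity to the pointwise relation $\lambda u - A u = f$ in $H_{k-2,\alpha-1}^\C$. Since $u \in H_{k+2,\alpha}^\C$, Lemma~\ref{lem_ineq_hardy} guarantees $A u \in H_{k-2,\alpha-1}^\C$, and $\lambda u \in H_{k,\alpha-\frac12}^\C \hookrightarrow H_{k-2,\alpha-1}^\C$, so the residual $r := \lambda u - A u - f$ lies in $H_{k-2,\alpha-1}^\C$. Testing the weak identity against $v \in C_\mathrm{c}^\infty((0,\infty);\C)$ and undoing the integrations by parts that produced \eqref{rewrite-blka} yields $(\phi, r)_{\alpha - \frac12} = 0$ for $\phi = \sum_{j=0}^k c_j(-D+2\alpha-1)^j D^j v$. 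By Lemma~\ref{lemma_density} such $\phi$ are dense in $H_{\alpha-\frac12}^\C$, which is paired with $H_{k-2,\alpha-1}^\C$ through the iterative definition of the negative-order weighted spaces; hence $r = 0$. Uniqueness follows because the difference of two strong solutions satisfies the homogeneous weak problem, to which Lemma~\ref{lem_bdd+coercivity} assigns only the zero solution.

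The main technical obstacle is ensuring that the two conditions on $\boldsymbol{c}$ (coercivity versus density) are genuinely compatible and that the descending induction terminates with finite, $\lambda$-independent constants; once this bookkeeping is carried out, the remainder of the argument is a clean duality/density step and the conclusion of the lemma follows.
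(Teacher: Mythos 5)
Your proposal is correct and follows essentially the same route as the paper: fix $\boldsymbol{c}$ by descending induction so that both \eqref{constraint-cj} and \eqref{cond-cj-2} hold, invoke Lemma~\ref{lem_bdd+coercivity} for the unique Lax--Milgram solution and the estimate, and then use the density statement of Lemma~\ref{lemma_density} to upgrade the weak identity to the strong equation \eqref{resolvent-problem}. The only (harmless) imprecision is the claimed embedding $H_{k,\alpha-\frac12}^\C \hookrightarrow H_{k-2,\alpha-1}^\C$, which fails for large $x$ because the weights cross; one instead concludes $\lambda u - Au - f$ is a well-defined locally integrable residual annihilated by the dense family of test functions, exactly as the paper implicitly does.
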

\begin{proof}
We choose $\boldsymbol{c} = (c_j)_{j = 0}^k$ iteratively in descending order in $j$ such that \eqref{constraint-cj} and \eqref{cond-cj-2} are satisfied. By Lemma~\ref{lem_bdd+coercivity} it follows that there exists a unique solution strong solution $u \in \mathscr{H}_{k,\alpha}$ if $\lambda \ne 0$ and $u \in H_{k+2,\alpha}$ if $\lambda = 0$ to \eqref{lax-milgram-eq}. In view of \eqref{def-bkalpha}, the density result, Lemma~\ref{lemma_density}, together with $(\lambda u - A u - f) \ind_K \in H_{\alpha-\frac 1 2}^\C$ for any $K \Subset (0,\infty)$, entails that \eqref{resolvent-problem} must be satisfied in the strong sense. Furthermore,
\begin{align*}
|\lambda| \verti{u}_{k-2,\alpha-1} \stackrel{\eqref{resolvent-problem}}{\le} \verti{f}_{k-2,\alpha-1} + \verti{Au}_{k-2,\alpha-1} \stackrel{\eqref{a-op-pd}}{\lesssim} \verti{f}_{k-2,\alpha-1} + \verti{u}_{k+2,\alpha}
    \stackrel{\eqref{eq_strong_coerc_estimate}}{\lesssim_{k,\alpha}} \verti{f}_{k-2,\alpha-1},
\end{align*}
that is, the resolvent estimate holds for all $\lambda \in \C$ such that $\Re\lambda \ge 0$. This inequality in combination with \eqref{eq_strong_coerc_estimate} entails \eqref{eq_strong_coerc_estimate_2}.
\end{proof}
\begin{proposition}[Maximal regularity, homogeneous initial data]\label{prop_MR}
Suppose that $\alp$ is in the coercivity range of $\mathfrak p(D)$ (cf.~\eqref{coerc_range} of Lemma~\ref{lem_coercive_range} for a sufficient criterion) and $k \ge 2$. Then the operator $A:=-x^{-1}\mathfrak p(D)\colon H_{k-2,\alpm} \supset D(A) \to H_{k-2,\alpm}$ with $D(A) \stackrel{\eqref{a-op-domain}}{=} H_{k-2,\alpm} \cap  H_{k+2,\alp}$ is the generator of a bounded analytic semigroup. In particular, $A$ has maximal $L^p$-regularity, where $1 < p < \infty$. This entails that for every $f \in L^p(0,\infty;H_{k-2,\alpm})$ there exists a unique solution $u^{(2)}$ to \eqref{eq_cauchy,f} such that $\partial_t u^{(2)} \in L^p(0,\infty;H_{k-2,\alpm})$ and $A u^{(2)} \in L^p(0,\infty;H_{k-2,\alpm})$. This solution satisfies the maximal-re\-gu\-la\-ri\-ty estimate
\begin{equation}\label{mr-homogeneous-initial}
\|\partial_t u^{(2)}\|_{L^p(0,\infty;H_{k-2,\alpm})}+\|Au^{(2)}\|_{L^p(0,\infty;H_{k-2,\alpm})} \lesssim_{k,\alpha, p} \|f\|_{L^p(0,\infty;H_{k-2,\alpm})}.
\end{equation}
\end{proposition}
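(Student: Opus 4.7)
The plan is to use the resolvent estimate from Lemma~\ref{lem-resolvent} to show that $A$ is sectorial on the Hilbert space $H_{k-2,\alpha-1}$ with vertex at the origin; generation of a bounded analytic semigroup will then be immediate from standard semigroup theory, and maximal $L^p$-regularity for all $1<p<\infty$ will follow from the Hilbert-space structure of the ambient space.

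More concretely, I first convert the two-norm estimate of Lemma~\ref{lem-resolvent},
$$|\lambda|\, \verti{u}_{k,\alpha - \frac 1 2}^2 + \verti{u}_{k+2,\alpha}^2 \lesssim_{k,\alpha} \verti{f}_{k-2,\alpha-1}^2,$$
into a resolvent bound in $\mathcal{L}(H_{k-2,\alpha-1}^\C)$. The trivial estimate $\verti{Au}_{k-2,\alpha-1} \lesssim_{k,\alpha} \verti{u}_{k+2,\alpha}$ (obtained by writing $Au = -x^{-1} p(D) u$ and pushing the weight $x^{-1}$ through the logarithmic derivatives via $D x^{-1} = x^{-1}(D-1)$) combined with the estimate above yields $\verti{Au}_{k-2,\alpha-1} \lesssim_{k,\alpha} \verti{f}_{k-2,\alpha-1}$ uniformly in $\lambda$. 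Inserting this into $\lambda u = f + Au$ then gives
$$|\lambda|\, \verti{u}_{k-2,\alpha-1} \lesssim_{k,\alpha} \verti{f}_{k-2,\alpha-1} \quad \text{for all } \lambda \in \C,\ \Re \lambda \ge 0,\ \lambda \ne 0,$$
which, together with Lemma~\ref{lem_ineq_hardy} guaranteeing $u \in D(A)$, is the sought resolvent estimate on the closed right half-plane. A standard Neumann-series argument centered at points $i\tau$ on the imaginary axis then extends both the resolvent set and the uniform bound to an open sector $\Sigma_{\pi/2 + \delta} \setminus \{0\}$ for some $\delta>0$ depending only on the implicit constant above.

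At this point, $A$ is densely defined and closed by Lemma~\ref{lem_ineq_hardy} and sectorial with vertex at the origin, so classical semigroup theory (e.g.~\cite[Prop.~2.1.1]{Lunardi}) implies that $A$ generates a bounded analytic semigroup $(e^{tA})_{t\ge 0}$ on $H_{k-2,\alpha-1}$. Since $H_{k-2,\alpha-1}$ is (isomorphic to) a weighted $L^2$-based Sobolev space and is therefore a Hilbert space, bounded analyticity automatically entails maximal $L^p$-regularity for every $1 < p < \infty$: for $p = 2$ this is de~Simon's theorem, while for general $p$ one invokes the fact that on a Hilbert space $R$-boundedness is equivalent to uniform boundedness, so the Kalton--Weis characterization of maximal $L^p$-regularity applies directly. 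The estimate \eqref{mr-homogeneous-initial} is then the standard reformulation of maximal $L^p$-regularity for the inhomogeneous Cauchy problem \eqref{eq_cauchy,f}. I do not expect any genuine obstacle here, as the analytical work has been absorbed into Lemma~\ref{lem-resolvent} through the coercivity construction; the present proposition is essentially a packaging of that resolvent estimate using standard Hilbert-space semigroup machinery.
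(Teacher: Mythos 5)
Your proof is correct and follows essentially the same route as the paper: both pass the two-norm resolvent bound of Lemma~\ref{lem-resolvent} through $\lambda u = f + Au$ and the trivial estimate $\verti{Au}_{k-2,\alpha-1} \lesssim \verti{u}_{k+2,\alpha}$ to obtain the operator-norm resolvent estimate on $H^\C_{k-2,\alpha-1}$, extend it to a sector, and conclude generation of a bounded analytic semigroup. The only cosmetic difference is that the paper appeals directly to \cite{DeSimon1964} for maximal $L^p$-regularity on Hilbert spaces, whereas you route through the $p=2$ case plus the Kalton--Weis $R$-boundedness characterization; both routes are standard and valid.
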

\begin{proof}
For $\lambda \in \C$ with $\Re\lambda \ge 0$, Lemma~\ref{lem-resolvent} yields for every $f \in H_{k-2,\alpha-1}^\C$ a unique solution $u \in \mathscr H_{k,\alpha}^\C$ for $\lambda \ne 0$ and $u \in H_{k+2,\alpha}^\C$ if $\lambda = 0$ to \eqref{resolvent-problem} satisfying \eqref{eq_strong_coerc_estimate_2}. From \eqref{eq_strong_coerc_estimate_2} it follows that the resolvent estimate holds in the half plane $\{\lambda\in\C: \Re \lambda \geq 0\}$. By \cite[Proposition~2.1.11]{Lunardi}, we know that this now also holds in a sector with spectral angle larger than $\frac \pi 2$.  Thus $A$ generates a bounded analytic semigroup (\cite[Definition 2.0.2]{Lunardi}). From \cite{DeSimon1964} it then follows that $A$ has maximal $L^p$-regularity, thus in particular satisfying \eqref{mr-homogeneous-initial}.
\end{proof}
%

\subsection{Homogeneous Equation}\label{sec_homogeneous_equation}
In this subsection we will show estimates for \eqref{eq_cauchy,u(0)neq0}. For this we will often use the fact that $A:=-x^{-1}\mathfrak p(D)\colon H_{k-2,\alpm} \supset D(A)\to H_{k-2,\alpm}$, with $D(A) \stackrel{\eqref{a-op-domain}}{=} H_{k-2,\alpm}\cap H_{k+2,\alp}$,
generates an analytic semigroup (cf.~Proposition~\ref{prop_MR}). In what follows, we let $0 < \varrho < 1$, $1 < p < \infty$, and use the space (cf.~\cite[(2.2.3)]{Lunardi})
\begin{subequations}\label{def-dap}
\begin{equation}
D_A(\varrho,p) := \big\{v \in H_{k-2,\alpha-1} \colon \tau \mapsto w(\tau) := \lvert \tau^{1-\varrho-\frac 1 p} A e^{\tau A} v \rvert_{k-2,\alpha-1} \in L^p(0,1)\big\},
\end{equation}
with norm
\begin{equation}\label{eq_norm_D_A}
\vertii{v}_{D_A(\varrho,p)} = \verti{v}_{k-2,\alpha-1} + [v]_{D_A(\varrho,p)} := \verti{v}_{k-2,\alpha-1} + \vertii{w}_{L^p(0,1)}.
\end{equation}
\end{subequations}
\begin{proposition}[A-priori estimates, homogeneous right-hand side]\label{prop_interpol_ineq_T=infty}
For $1 < p < \infty$, $k \in \mathbb{N}$ and  $k \ge 2$, $\alpha$ in the coercivity range of $\mathfrak p(D)$ (cf.~\eqref{coerc_range} of Lemma~\ref{lem_coercive_range} for a sufficient criterion), and $u^{(0)} \in D_A(1-\frac 1 p,p)$, problem~\eqref{eq_cauchy,u(0)neq0} has a solution $u^{(1)} \colon (0,\infty)^2 \to \R$ such that $\partial_t u^{(1)}, A u^{(1)} \in L^p(0,\infty;H_{k-2,\alpm})$, and the a-priori estimate
\begin{equation}\label{apriori-u1}
    \vertii{\partial_t u^{(1)}}_{L^p(0,\infty;H_{k-2,\alpm})} + \|A u^{(1)}\|_{L^p(0,\infty;H_{k-2,\alpm})} \lesssim_{k,\alpha,p} \lVert u^{(0)} \rVert_{D_A(1-\frac{1}{p},p)}
\end{equation}
holds true.
\end{proposition}
\begin{proof}
Since by Proposition~\ref{prop_MR} the operator $A$ generates an analytic semigroup, estimate~\eqref{apriori-u1} follows from standard semigroup theory. We have by \cite[Proposition~2.1.1~(iv)]{Lunardi} that
\begin{align*}
  \vertii{\partial_t u^{(1)}}_{L^p(0,\infty;H_{k-2,\alpm})} &= \vertii{Au^{(1)}}_{L^p(0,\infty;H_{k-2,\alpm})}\\
   &= \vertii{Au^{(1)}}_{L^p(0,1;H_{k-2,\alpm})} +\vertii{Au^{(1)}}_{L^p(1,\infty;H_{k-2,\alpm})}.
\end{align*}
By definition, $\vertii{Au^{(1)}}_{L^p(0,1;H_{k-2,\alpm})} = [u^{(0)}]_{D_A(1-\frac{1}{p},p)}$. Furthermore,
\begin{align*}
  \vertii{Au^{(1)}}_{L^p(1,\infty;H_{k-2,\alpm})}^p = \int_1^{\infty} \verti{A e^{tA} u^{(0)}}^p_{k-2,\alpha - 1}\d t \lesssim \int_1^{\infty}t^{-p} |u^{(0)}|_{k-2,\alpha -1}\d t,
\end{align*}
where we have used \cite[Proposition~2.1.1~(iii)]{Lunardi} in the last step and that the growth bound $\omega$ therein is zero because of \eqref{eq_strong_coerc_estimate_2}. Estimate~\eqref{apriori-u1} follows from \eqref{eq_norm_D_A}.
\end{proof}

In the next lemma, we find a characterization for the $D_A(1-\frac{1}{p},p)$ space.
\begin{lemma}\label{lem-equiv-trace}
For $k \in \N$ with $k \ge 2$, $\alpha \in \R$, and $1 < p < \infty$ it holds
\begin{equation}\label{equiv-trace}
D_A(1-\tfrac{1}{p},p) = (H_{k-2,\alpha-1},D(A))_{1-\frac 1 p,p} = H_{k+2-\frac 4 p,\alpha - \frac 1 p, p} \cap H_{k-2,\alpha-1}
\end{equation}
with equivalent norms, where the respective constants of embeddings only depend on $p$.
\end{lemma}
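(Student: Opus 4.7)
My plan is to prove the two equivalences in \eqref{equiv-trace} separately.

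For the first equivalence $D_A(1-\tfrac{1}{p}, p) \sim (H_{k-2,\alpha-1}, D(A))_{1-\frac{1}{p}, p}$, I would invoke the classical characterization of the trace space of the generator of a bounded analytic semigroup. By Proposition~\ref{prop_MR}, the operator $A$ generates a bounded analytic semigroup on $X := H_{k-2,\alpha-1}$, and by Lemma~\ref{lem_ineq_hardy} the domain $D(A)$ equals $H_{k-2,\alpha-1} \cap H_{k+2,\alpha}$ with the graph norm equivalent to the intersection norm $\verti{u}_{k-2,\alpha-1} + \verti{u}_{k+2,\alpha}$. Then $D_A(\theta, p) \sim (X, D(A))_{\theta, p}$ for $\theta \in (0, 1)$ and $p \in (1, \infty)$ is a standard semigroup-theoretic identification, contained for example in \cite[Proposition~2.2.2]{Lunardi}.

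For the second equivalence, writing $Y := H_{k+2,\alpha}$ so that $D(A) = X \cap Y$, I would reduce the claim to the abstract interpolation identity
\[
(X, X \cap Y)_{\theta, p} \sim X \cap (X, Y)_{\theta, p} \quad \text{for $\theta \in (0, 1)$, $p \in (1, \infty)$,}
\]
which, together with the definition~\eqref{def-trace-space} of $H_{k+2-\frac{4}{p}, \alpha-\frac{1}{p}, p}$, yields the claim.

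To prove this abstract identity I would compare $K$-functionals. The forward inclusion is immediate from monotonicity of real interpolation applied to the embeddings $X \cap Y \hookrightarrow Y$ and $X \cap Y \hookrightarrow X$. For the reverse inclusion, given $x \in X \cap (X, Y)_{\theta, p}$ and any decomposition $x = x_0 + x_1$ with $x_0 \in X$, $x_1 \in Y$, the assumption $x \in X$ forces $x_1 = x - x_0 \in X$ with $\lVert x_1 \rVert_X \le \lVert x \rVert_X + \lVert x_0 \rVert_X$. This yields the pointwise bound
\[
K(t, x; X, X \cap Y) \le (1+t) \lVert x_0 \rVert_X + t \lVert x_1 \rVert_Y + t \lVert x \rVert_X,
\]
so that $K(t, x; X, X \cap Y) \lesssim K(t, x; X, Y) + \min(t, 1) \lVert x \rVert_X$, where for $t \ge 1$ I additionally use the trivial decomposition $x = x + 0$. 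Raising to the $p$-th power against the measure $t^{-\theta p} \tfrac{\d t}{t}$ on $(0, \infty)$, with $\theta = 1-\frac{1}{p} \in (0, 1)$ ensuring integrability of $t^{(1-\theta)p - 1}$ near zero, delivers the desired norm estimate.

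The main obstacle I expect is the non-nested abstract interpolation identity, since $X$ and $Y$ are not ordered by inclusion; once this has been established, the remainder is a direct assembly of Lunardi's trace-space characterization with the domain identification from Lemma~\ref{lem_ineq_hardy}.
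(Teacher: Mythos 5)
Your proof is correct and structurally identical to the paper's: the first equivalence is obtained exactly as in the paper from the trace method \cite[Proposition~2.2.2]{Lunardi} together with the identification $D(A)=H_{k-2,\alpha-1}\cap H_{k+2,\alpha}$ from Lemma~\ref{lem_ineq_hardy}, and the second equivalence rests on the same key identity $(Y_1,Y_1\cap Y_2)_{1-\frac1p,p}=Y_1\cap(Y_1,Y_2)_{1-\frac1p,p}$. The one difference is that the paper imports this identity as a black box from \cite[Theorem~1.1]{Haasse}, whereas you prove it directly: the forward inclusion by monotonicity of real interpolation, and the reverse by observing that $x\in X$ forces $x_1=x-x_0\in X\cap Y$ in any decomposition, giving $K(t,x;X,X\cap Y)\lesssim K(t,x;X,Y)+\min(t,1)\vertii{x}_X$ and hence the norm bound since $t^{-\theta p}\min(t,1)^p\,\tfrac{\d t}{t}$ is integrable for $\theta=1-\frac1p\in(0,1)$. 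This argument is sound and makes the step self-contained at the cost of a short computation; it buys nothing beyond independence from the cited reference, but there is no gap.
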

\begin{proof}
The first equality in \eqref{equiv-trace} with equivalent norms follows from the trace method (cf.~\cite[Proposition 2.2.2]{Lunardi}). By \cite[Theorem~1.1]{Haasse} for any pair of Banach spaces $Y_1$, $Y_2$ it holds
\begin{equation}\label{eq_interpol_intersect}
(Y_1, Y_2\cap Y_1)_{1-\frac{1}{p},p} = (Y_1,Y_2)_{1-\frac{1}{p},p} \cap Y_1,
\end{equation}
which because of \eqref{def-trace-space} entails the second equality in \eqref{equiv-trace} with equivalent norms.
\end{proof}

We now give the proof of Lemma~\ref{lemma_char_interpol}, which in view of Lemma~\ref{lem-equiv-trace} leads to a characterization of the $D_A(1-\frac{1}{p},p)$ space for the initial data.

\begin{proof}[Proof of Lemma~\ref{lemma_char_interpol}]
For $u \in (H_{k-2,\alpm},H_{k+2,\alp})_{1-\vartheta,p}$ we have for $p \le 2$ by \cite[Theorem~3.4.1~(b)]{Bergh_Loefstroem} and with help of the $K$-method
\begin{align*}
    &\verti{u}_{k + 2-4\vartheta,\alpha - \vartheta,p}^2 \\
    & \quad \gtrsim_p \vertii{u}_{(H_{k-2,\alpm},H_{k+2,\alp})_{1-\vartheta,2}}^2 \\
    & \quad = \int_0^{\infty} \tau^{-2(1- \vartheta)} \inf_{u=u^{(1)}+u^{(2)}} \big(\verti{u^{(1)}}_{k-2,\alpm}+\tau\verti{u^{(2)}}_{k+2,\alp}\big)^2 \tfrac{\d \tau}{\tau} \\
    & \quad \sim \int_0^{\infty}\inf_{u=u^{(1)}+u^{(2)}} \big(\tau^{- 2(1 - \vartheta)} \verti{u^{(1)}}_{k-2,\alpm}^2 + \tau^{2 \vartheta} \verti{u^{(2)}}_{k+2,\alp}^2\big) \tfrac{\d \tau}{\tau} \\
    & \quad = \int_0^{\infty} \int_0^\infty \inf_{u=u^{(1)}+u^{(2)}} \Big(\tau^{-2(1-\vartheta)} x^{-2(\alpha-1)} \sum_{j = 0}^{k-2} (D^j u^{(1)})^2 + \tau^{2\vartheta} x^{-2\alpha} \sum_{j = 0}^{k+2} (D^j u^{(2)})^2\Big) \tfrac{\d \tau}{\tau} \, \tfrac{\d x}{x},
\end{align*}
where Fubini's theorem was used in the last step and equivalence holds in the first step and hereafter if $p = 2$. With help of the substitution $\tau \mapsto x \tau$ and again applying Fubini's theorem, we obtain
\begin{align*}
    &\verti{u}_{k + 2-4\vartheta,\alpha - \vartheta,p}^2 \\
    & \quad \gtrsim_p \int_0^{\infty} \int_0^\infty \inf_{u=u^{(1)}+u^{(2)}} \Big(\tau^{-2(1-\vartheta)} x^{-2(\alpha-\vartheta)} \sum_{j = 0}^{k-2} (D^j u^{(1)})^2 + \tau^{2\vartheta} x^{-2(\alpha-\vartheta)} \sum_{j = 0}^{k+2} (D^j u^{(2)})^2\Big) \tfrac{\d \tau}{\tau} \, \tfrac{\d x}{x} \\
    & \quad \sim_{k,\alpha,\vartheta} \int_0^{\infty} \inf_{(s \mapsto e^{-(\alpha-\vartheta) s} u(e^s)) = v^{(1)} + v^{(2)}} \int_\R \Big(\tau^{-2(1-\vartheta)} \sum_{j=0}^{k-2}(\partial_s^j v^{(1)})^2 + \tau^{2\vartheta} \sum_{j=0}^{k+2} (\partial_s^j v^{(2)})^2\Big) \d s \, \tfrac{\d \tau}{\tau} \\
    & \quad \sim \int_0^\infty \tau^{-2(1-\vartheta)} \inf_{(s \mapsto e^{-(\alpha-\vartheta) s} u(e^s)) = v^{(1)} + v^{(2)}} \big(\vertii{v^{(1)}}_{W^{k-2,2}(\R)} + \tau \vertii{v^{(2)}}_{W^{k+2,2}(\R)}\big)^2 \tfrac{\d \tau}{\tau} \\
    & \quad = \vertii{s \mapsto e^{-(\alpha-\vartheta) s} u(e^s)}_{(W^{k-2,2}(\R),W^{k+2,2}(\R))_{1-\vartheta,2}}^2 \sim \vertii{s \mapsto e^{-(\alpha-\vartheta) s} u(e^s)}_{W^{k+2-4\vartheta,2}(\R)}^2.
\end{align*}
For $p > 2$ we use the Sobolev embedding and obtain
\begin{align*}
&\verti{u}_{k + 2-4\vartheta,\alpha - \vartheta,p}^p \\
    & \quad \sim_p \int_0^\infty \inf_{u = u^{(1)} + u^{(2)}} \big(\tau^{-p(1-\vartheta)} \verti{u^{(1)}}_{k-2,\alpha-1}^p + \tau^{p\vartheta} \verti{u^{(2)}}_{k+2,\alpha}^p\big) \tfrac{\d\tau}{\tau} \\
    & \quad \sim_{k,\alpha,\vartheta, p} \int_0^\infty \inf_{u = u^{(1)} + u^{(2)}} \big(\tau^{-p(1-\vartheta)} \vertii{s \mapsto e^{-(\alpha-1) s} u^{(1)}(e^s)}_{W^{k-2,2}(\R)}^p + \tau^{p\vartheta} \vertii{s \mapsto e^{-\alpha s} u^{(2)}(e^s)}_{W^{k+2,2}(\R)}^p\big) \tfrac{\d\tau}{\tau} \\
    & \quad \gtrsim_{k,p} \int_0^\infty \inf_{u = u^{(1)} + u^{(2)}} \big(\tau^{-p(1-\vartheta)} \vertii{s \mapsto e^{-(\alpha-1) s} u^{(1)}(e^s)}_{W^{m_1,p}(\R)}^p + \tau^{p\vartheta} \vertii{s \mapsto e^{-\alpha s} u^{(2)}(e^s)}_{W^{m_2,p}(\R)}^p\big) \tfrac{\d\tau}{\tau},
\end{align*}
where $m_1 = k-\frac 5 2 + \frac 1 p$ and $m_2 = k+\frac 3 2 + \frac 1 p$. Using real interpolation for the Sobolev-Slobodeckij spaces and applying Fubini's theorem yields with the scaling $\tau \mapsto x \tau$
\begin{align*}
&\verti{u}_{k + 2-4\vartheta,\alpha - \vartheta,p}^p \\
    & \quad \gtrsim_{k, \alpha, \vartheta, p} \int_0^\infty \int_0^\infty \int_0^\infty \inf_{u = u^{(1)} + u^{(2)}} \Big(\tau^{-p(1-\vartheta)} x^{-p (\alpha-1)} \inf_{u^{(1)} = v^{(1)} + v^{(2)}} \big(\sigma^{-\frac p 2 + 1} \sum_{j = 0}^{k-3} \verti{D^j v^{(1)}}^p + \sigma^{\frac p 2 + 1} \sum_{j = 0}^{k-2} \verti{D^j v^{(2)}}^p\big) \\
    & \quad \phantom{\gtrsim \int_0^\infty \int_0^\infty \int_0^\infty} + \tau^{p\vartheta} x^{-p\alpha} \inf_{u^{(2)} = w^{(1)}+w^{(2)}} \big(\sigma^{-\frac p 2 + 1} \sum_{j = 0}^{k+1} \verti{D^j w^{(1)}}^p + \sigma^{\frac p 2 + 1} \sum_{j = 0}^{k+2} \verti{D^j w^{(2)}}^p\big)\Big) \tfrac{\d \sigma}{\sigma} \, \tfrac{\d \tau}{\tau} \, \tfrac{\d x}{x} \\
        & \quad = \int_0^\infty \int_0^\infty \int_0^\infty x^{-p(\alpha-\vartheta)} \inf_{u = u^{(1)} + u^{(2)}} \Big(\tau^{-p(1-\vartheta)} \inf_{u^{(1)} = v^{(1)} + v^{(2)}} \big(\sigma^{-\frac p 2 + 1} \sum_{j = 0}^{k-3} \verti{D^j v^{(1)}}^p + \sigma^{\frac p 2 + 1} \sum_{j = 0}^{k-2} \verti{D^j v^{(2)}}^p\big) \\
    & \quad \phantom{\gtrsim \int_0^\infty \int_0^\infty \int_0^\infty} + \tau^{p\vartheta} \inf_{u^{(2)} = w^{(1)}+w^{(2)}} \big(\sigma^{-\frac p 2+1} \sum_{j = 0}^{k+1} \verti{D^j w^{(1)}}^p + \sigma^{\frac p 2 + 1} \sum_{j = 0}^{k+2} \verti{D^j w^{(2)}}^p\big)\Big) \tfrac{\d \sigma}{\sigma} \, \tfrac{\d \tau}{\tau} \, \tfrac{\d x}{x} \\
            & \quad \sim_{k, \alpha, \vartheta, p} \int_0^\infty \inf_{u = u^{(1)} + u^{(2)}} \Big(\tau^{-p(1-\vartheta)} \vertii{s \mapsto e^{-(\alpha-\vartheta) s} u^{(1)}(e^s)}_{W^{m_1,p}(\R)}^p + \tau^{p\vartheta} \vertii{s \mapsto e^{-(\alpha-\vartheta) s} u^{(2)}(e^s)}_{W^{m_2,p}(\R)}^p\Big) \tfrac{\d \tau}{\tau} \\
            & \quad \sim_p \vertii{s \mapsto e^{-(\alpha-\vartheta) s} u(e^s)}_{W^{k + \frac 3 2 + \frac 1 p - 4 \vartheta,p}(\R)}^p.
\end{align*}
Thus we have proved \eqref{eq_char_interpol}. Estimate~\eqref{eq_lower_bound_interpol} follows by Sobolev embedding.

\medskip

In order to prove \eqref{lower_bound_l2eta} for $p > 2$, we notice that
\begin{align*}
\sum_{j = 0}^m \int_0^\infty \eta^2 x^{-2(\alpha-\vartheta)} (D^j u)^2 \tfrac{\d x}{x} & \le \sum_{j = 0}^m \Big(\int_0^\infty \eta^{\frac{2p}{p-2}} \, \tfrac{\d x}{x}\Big)^{\frac{p-2}{p}} \Big(\int_0^\infty x^{-p(\alpha-\vartheta)} \verti{D^j u}^p \, \tfrac{\d x}{x}\Big)^{\frac 2 p} \\
&\lesssim_{k,\kappa,\alpha,\vartheta,p} \vertii{s \mapsto e^{-(\alpha-\vartheta) s} u(e^s)}_{W^{m,p}(\R)} \stackrel{\eqref{eq_char_interpol}}{\lesssim_{k,\alpha,\vartheta,p}} \verti{u}_{k + 2-4\vartheta,\alpha - \vartheta,p},
\end{align*}
where H\"older's inequality was used.
\end{proof}
%

\subsection{Parabolic Maximal \texorpdfstring{$L^p$}{Lp}-Regularity}\label{sec_Linear_Maximal_Reg_est}
In what follows we show maximal-regularity estimates for \eqref{eq_full_cauchy}. For this, we use the estimates derived in \S\ref{sec_inhomogeneous_equation} and \S\ref{sec_homogeneous_equation}.
\begin{proposition}\label{proposition_estimate_almostMR}
Let $I := [0,T]$ where $T \in (0,\infty)$ or $I := [0,\infty)$, suppose that $M \in \N$, and assume that for $m \in \{1,\ldots,M\}$ we have $k_m \in \N$ with $k_m \ge 2$, $\alpha_m$ lies in the coercivity range of $\mathfrak p(D)$ (cf.~\eqref{coerc_range} of Lemma~\ref{lem_coercive_range} for a sufficient criterion), and $\mu_m \ge 0$. Furthermore, suppose that $\mu_m = 0$ for at least one $m \in \{1,\ldots,M\}$,
\[
u^{(0)}\in \bigcap_{\substack{m \in \{1,\ldots,M\} \\ \mu_m = 0}}  H_{k_m+2- \frac 4 p,\alpha_m - \frac 1 p,p},
\]
and $f \colon (0,\infty)^2 \to \R$ satisfies $(t \mapsto t^{\frac{\mu_m}{p}} f) \in L^p(0,\infty; H_{k_m-2,\alpha_m -
1})$ for all $m \in \{1,\ldots,m\}$. Then there exist unique locally integrable solutions $u = u^{(1)} + u^{(2)} \colon (0,\infty)^2 \to \R$ to \eqref{eq_full_cauchy} (where $u^{(1)}$ solves \eqref{eq_cauchy,u(0)neq0} and $u^{(2)}$ solves \eqref{eq_cauchy,f}) such that
$(t \mapsto t^{\frac{\mu_m}{p}} \partial_t u^{(\ell)}) \in L^p(0,\infty;H_{k_m-2,\alpha_m - 1})$ and $(t \mapsto t^{\frac{\mu_m}{p}} u^{(\ell)}) \in L^p(0,\infty;H_{k_m+2,\alpha_m})$ for all $m \in \{1,\ldots,m\}$ and $\ell \in \{1,2\}$. These solutions satisfy the estimates
\begin{align}\nonumber
& \sum_{\ell = 1}^2 \Big(\sup_{t \in I} t^{\mu_m} \verti{u^{(\ell)}}_{k_m+2-\frac 4 p,\alpha_m - \frac 1 p,p}^p + \int_I t^{\mu_m} \big(\verti{\partial_t u^{(\ell)}}^p_{k_m-2,\alpha_m-1}+\verti{u^{(\ell)}}^p_{k_m+2,\alp_m}\big) \, \d t\Big) \\
& \quad \lesssim_{(k_m,\alpha_m)_{m = 1}^M,p} \delta_{\mu_m,0} \verti{u^{(0)}}_{k_m+2-\frac 4 p,\alpha_m - \frac 1 p,p}^p + \mu_m \sum_{\ell = 1}^2 \int_I t^{\mu_m-1} \verti{u^{(\ell)}}_{k_m+2-\frac 4 p,\alpha_m - \frac 1 p,p}^p \, \d t \nonumber \\
&\phantom{\quad \lesssim_{(k_m,\alpha_m)_{m = 1}^M,p}} + \int_I t^{\mu_m} \verti{f}^p_{k_m-2,\alpha_m - 1} \, \d t
\label{eq_second_MR_estimate}
\end{align}
for all $m \in \{1,\ldots,m\}$, where $\delta_{\mu_m,0} = 1$ if $\mu_m = 0$ and $\delta_{\mu_m,0} = 0$ if $\mu_m > 0$.
\end{proposition}

\begin{proof}
By approximation we may assume $u^{(0)} \in C_\mathrm{c}^\infty((0,\infty))$ and $f \in C_\mathrm{c}^\infty((0,\infty)^2)$. As a first step, we fix $m \in \{1,\ldots,M\}$. From Proposition~\ref{prop_MR} we get a unique locally integrable solution $u^{(2)} \colon (0,\infty)^2 \to \R$ to \eqref{eq_cauchy,f} satisfying the following maximal-regularity estimate:
\begin{equation*}
    \|\partial_t u^{(2)}\|_{L^p(0,\infty;H_{k_m-2,\alpha_m-1})} + \|Au^{(2)}\|_{L^p(0,\infty;H_{k_m-2,\alpha_m-1})} \lesssim_{k_m,\alpha_m,p} \|f\|_{L^p(0,\infty;H_{k_m-2,\alpha_m-1})}.
\end{equation*}
Furthermore, \eqref{equiv-trace} of Lemma~\ref{lem-equiv-trace}, the trace method \cite[Proposition~1.2.10]{Lunardi}, and Lemma~\ref{lem_ineq_hardy} entail
\begin{align*}
&\sup_{t \ge 0} \| u^{(2)}\|_{D_{A_m}(1-\frac 1 p,p)} \\
& \quad \lesssim_p \ \ \sup_{t \ge 0} \|u^{(2)}\|_{(H_{k_m-2,\alpha_m-1},D(A_m))_{1-\frac 1 p,p}} \\
& \quad \lesssim_p \ \ \| \partial_t u^{(2)}\|_{L^p(0,\infty;H_{k_m-2,\alpha_m-1})} + \| u^{(2)}\|_{L^p(0,\infty;D(A_m))} \\
& \quad \lesssim_{k,\alpha,p} \ \ \| \partial_t u^{(2)}\|_{L^p(0,\infty;H_{k_m-2,\alpha_m-1})} + \| u^{(2)}\|_{L^p(0,\infty;H_{k_m-2,\alpha_m-1})} + \| A u^{(2)}\|_{L^p(0,\infty;H_{k_m-2,\alpha_m-1})},
\end{align*}
where
\[
A = A_m \colon H_{k_m-2,\alpha_m-1} \supset D(A_m) \stackrel{\eqref{a-op-domain}}{=} H_{k_m+2,\alpha_m} \cap H_{k_m-2,\alpha_m-1} \to H_{k_m-2,\alpha_m-1},
\]
where $D_{A_m}(1-\frac 1 p,p)$ is given by \eqref{def-dap} with $k = k_m$ and $\alpha = \alpha_m$. Hence, we have
\begin{subequations}\label{mr-intermediate}
\begin{align}\nonumber
& \sup_{t \ge 0} \| u^{(2)}\|_{D_{A_m}(1-\frac 1 p,p)} + \|\partial_t u^{(2)}\|_{L^p(0,\infty;H_{k_m-2,\alpha_m-1})} + \|A u^{(2)}\|_{L^p(0,\infty;H_{k_m-2,\alpha_m-1})} \\
& \quad \lesssim_{k_m,\alpha_m,p} \|f\|_{L^p(0,\infty;H_{k_m-2,\alpha_m-1})} + \| u^{(2)}\|_{L^p(0,\infty;H_{k_m-2,\alpha_m-1})}. \label{mr-intermediate-1}
\end{align}
The combination with \eqref{apriori-u1} of Proposition~\ref{prop_interpol_ineq_T=infty}, and \eqref{eq_ineq_hardy} of Lemma~\ref{lem_ineq_hardy} entails that  unique locally integrable solutions $u^{(1)},u^{(2)} \colon (0,\infty)^2 \to \R$ to \eqref{eq_cauchy,u(0)neq0} and \eqref{eq_cauchy,f}, respectively, exist such that $\partial_t u^{(\ell)} \in L^p(0,\infty;H_{k_m-2,\alpha_m - 1})$ and $u^{(\ell)} \in L^p(0,\infty;H_{k_m+2,\alpha_m})$, $\ell\in\{1,2\}$. This solution satisfies \eqref{mr-intermediate-1} and
\begin{align}
\vertii{\partial_t u^{(1)}}_{L^p(0,\infty;H_{k_m-2,\alpha_m-1})} + \vertii{u^{(1)}}^p_{L^p(0,\infty;H_{k_m+2,\alp_m})} \lesssim_{k_m,\alpha_m,p} \| u^{(0)}\|_{D_{A_m}(1-\frac{1}{p},p)}.
\end{align}
\end{subequations}
On the other hand, the same arguments also lead to unique locally integrable solutions $u^{(\ell)} \colon (0,\infty)^2 \to \R$ to \eqref{eq_cauchy,u(0)neq0} and \eqref{eq_cauchy,f}, respectively, such that 
\[
\partial_t u^{(\ell)} \in \bigcap_{m = 1}^M L^p(0,\infty;H_{k_m-2,\alpha_m - 1}) \quad \text{and} \quad u^{(\ell)} \in \bigcap_{m = 1}^ML^p(0,\infty;H_{k+2,\alpha_m}), \quad \text{for } \ell\in\{1,2\},
\]
so that by uniqueness for
every $m \in \{1,\ldots,M\}$ this solution is the same as the one for fixed $m \in \{1,\ldots,M\}$ and hence \eqref{mr-intermediate} is satisfied for every $m \in \{1,\ldots,M\}$.

\medskip

To lighten notation, we fix $m \in \{1,\ldots,M\}$ and write $A = A_m$, $k = k_m$, $\alpha = \alpha_m$, $\mu = \mu_m$, and $C = C_m$ from now on. Note that for any $\lambda > 0$ from \eqref{eq_cauchy,u(0)neq0} and \eqref{eq_cauchy,f}, respectively, we obtain the scaled problems
\begin{align*}
\partial_t (u^{(1)}(\lambda t,\lambda x)) - A (u^{(1)}(\lambda t,\lambda x)) &= 0 && \text{for } t,x > 0,\\
u(0,\lambda x) &= u^{(0)}(\lambda x) && \text{for } x > 0,
\end{align*}
and
\begin{align*}
\partial_t (u^{(2)}(\lambda t,\lambda x)) - A (u^{(2)}(\lambda t,\lambda x)) &= \lambda f(\lambda t,\lambda x) && \text{for } t,x > 0,\\
u^{(2)}(0,\lambda x) &= 0 && \text{for } x > 0.
\end{align*}
Hence, \eqref{mr-intermediate-1} entails after scaling
\begin{align*}
& \sup_{t \ge 0} \lambda^{1- p \alpha} \| u^{(2)} (t,\lambda\cdot) \|_{D_A(1-\frac 1 p,p)}^p + \|\partial_t u^{(2)}\|_{L^p(0,\infty;H_{k-2,\alpha-1})}^p + \|A u^{(2)}\|_{L^p(0,\infty;H_{k-2,\alpha-1})}^p \\
& \quad \lesssim_{k,\alpha,p} \|f\|_{L^p(0,\infty;H_{k-2,\alpha-1})}^p + \lambda^{-p} \| u^{(2)}\|_{L^p(0,\infty;H_{k-2,\alpha-1})}^p
\end{align*}
for any $\lambda > 0$. By \eqref{equiv-trace} of Lemma~\ref{lem-equiv-trace} it holds
\[
\lambda^{1-p \alpha} \| u^{(0)}(\lambda \cdot)\|_{D_A(1-\frac{1}{p},p)}^p \sim_{k,\alpha,p} \verti{u^{(0)}}_{k+2-\frac 4 p,\alpha - \frac 1 p,p}^p + \lambda^{1-p} \verti{u^{(0)}}_{k-2,\alpm}^p \to \verti{u^{(0)}}_{k+2-\frac 4 p,\alpha - \frac 1 p,p}^p
\]
as $\lambda \to \infty$. On the other hand, we obtain
\begin{align}
\lambda^{1 - p \alpha} \| u^{(0)}(\lambda \cdot)\|_{D_A(1-\frac{1}{p},p)}^p \ \ &\stackrel{\mathclap{\eqref{def-dap}}}{=} \ \ \lambda^{1-p\alpha} \verti{u^{(0)}(\lambda \cdot)}_{k-2,\alpha-1}^p + \lambda^{1 - p \alpha} \int_0^1 \verti{A e^{\tau A} (u^{(0)}(\lambda \cdot))}_{k-2,\alpha-1}^p \, \d\tau \nonumber \\
&\stackrel{\mathclap{\eqref{a-op-pd}}}{=} \ \ \lambda^{1-p\alpha} \verti{u^{(0)}}_{k-2,\alpm}^p + \lambda^{1-p\alpha} \int_0^1 \lambda^{p (\alpha-1)} \verti{\lambda A e^{\lambda \tau A} u^{(0)}}_{k-2,\alpha-1}^p \, \d\tau \nonumber \\
&= \ \ \lambda^{1-p\alpha} \verti{u^{(0)}}_{k-2,\alpm}^p + \int_0^\lambda \verti{A e^{\tau A} u^{(0)}}_{k-2,\alpha-1}^p \, \d\tau \nonumber \\
&\to \ \ \int_0^\infty \verti{A e^{\tau A} u^{(0)}}_{k-2,\alpha-1}^p \, \d\tau =: \verti{u^{(0)}}_{*}^p \nonumber \\
&\sim\verti{u^{(0)}}_{k+2-\frac 4 p,\alpha - \frac 1 p,p}^p < \infty \label{init-lambda-scale}
\end{align}
as $\lambda \to \infty$ by bounded monotone convergence. Hence, since
\[
\verti{u^{(1)}(t)}_*^p = \verti{e^{t A} u^{(0)}}_{*}^p \stackrel{\eqref{init-lambda-scale}}{=} \int_t^\infty \verti{A e^{\tau A} u^{(0)}}_{k-2,\alpha-1}^p \, \d\tau \stackrel{\eqref{init-lambda-scale}}{\le} \verti{u^{(0)}}_{*}^p
\]
for any $t \ge 0$ by definition, as $\lambda \to \infty$, \eqref{mr-intermediate} reduces to
\[
\sup_{t \ge 0} \big(\verti{u^{(1)}}_*^p+\verti{u^{(2)}}_*^p\big) + \int_0^{\infty}\big(\verti{\partial_t u^{(2)}}^p_{k-2,\alpm}+\verti{u^{(2)}}^p_{k+2,\alp}\big) \, \d t \le \verti{u^{(0)}}_{*}^p + C \int_0^{\infty} \verti{f}^p_{k-2,\alpm} \, \d t.
\]
By cutting off the right-hand side $f$, we infer that also
\begin{align*}
& \sup_{t \in [(j-1) \eps,j \eps]} \sum_{\ell = 1}^2 \verti{u^{(\ell)}}_{*}^p + \int_{(j-1)\eps}^{j\eps} \big(\verti{\partial_t u^{(2)}}^p_{k-2,\alpm}+\verti{u^{(2)}}^p_{k+2,\alp}\big) \, \d t \\
& \quad \le \verti{u((j-1) \eps)}_{*}^p + C \int_{(j-1)\eps}^{j\eps} \verti{f}^p_{k-2,\alpm} \, \d t
\end{align*}
must hold for any $\eps \in (0,\infty)$. Taking $T \in (0,\infty)$ and $\eps := \frac T N$, where $N \in \N$, we obtain after multiplying with $((j-1)\eps)^\mu$ and summation
\begin{align*}
& \sum_{\ell = 1}^2 \sum_{j = 1}^N \big(((j-1) \eps)^\mu \verti{u^{(\ell)}(j\eps)}_{*}^p - ((j-1) \eps)^\mu \verti{u^{(\ell)}((j-1)\eps)}_{*}^p\big) \\
& + \sum_{j = 1}^N \int_{(j-1)\eps}^{j\eps} ((j-1)\eps)^\mu \big(\verti{\partial_t u^{(2)}}^p_{k-2,\alpm}+\verti{u^{(2)}}^p_{k+2,\alp}\big) \, \d t \\
&\le C \sum_{j = 1}^N \int_{(j-1)\eps}^{j\eps} ((j-1)\eps)^\mu \verti{f}^p_{k-2,\alpm} \, \d t.
\end{align*}
By dominated convergence, it follows
\begin{align*}
\sum_{j = 1}^N \int_{(j-1)\eps}^{j\eps} ((j-1)\eps)^\mu \big(\verti{\partial_t u^{(2)}}^p_{k-2,\alpm}+\verti{u^{(2)}}^p_{k+2,\alp}\big) \d t &\to \int_0^T t^\mu \big(\verti{\partial_t u^{(2)}}^p_{k-2,\alpm}+\verti{u^{(2)}}^p_{k+2,\alp}\big) \, \d t, \\
\sum_{j = 1}^N \int_{(j-1)\eps}^{j\eps} ((j-1)\eps)^\mu \verti{f}^p_{k-2,\alpm} \, \d t &\to \int_0^T t^\mu \verti{f}^p_{k-2,\alpm} \, \d t
\end{align*}
as $N \to \infty$, as well as
\begin{align*}
& \sum_{\ell = 1}^2 \sum_{j = 1}^N \Big(((j-1) \eps)^\mu \verti{u^{(\ell)}(j\eps)}_{*}^p - ((j-1) \eps)^\mu \verti{u^{(\ell)}((j-1)\eps)}_{*}^p\Big) \\
& \quad = ((N-1) \eps)^\mu \sum_{\ell = 1}^2 \verti{u^{(\ell)}(T)}_*^p - \delta_{\mu,0} \verti{u^{(0)}}_*^p + \sum_{\ell = 1}^2 \sum_{j = 2}^{N-1} \big(((j-1)\eps)^\mu - (j\eps)^\mu\big) \verti{u^{(\ell)}(j\eps)}_*^p \\
& \quad \to T^\mu \sum_{\ell = 1}^2 \verti{u^{(\ell)}(T)}_*^p - \delta_{\mu,0} \verti{u^{(0)}}_*^p - \mu \sum_{\ell = 1}^2 \int_0^T t^{\mu-1} \verti{u^{(\ell)}(t)}_*^p \, \d t \quad \text{as } N \to \infty,
\end{align*}
by uniform convergence. Hence, after taking $T\to\infty$, we arrive at
\begin{align}\nonumber
& \sup_{t \ge 0} t^\mu \sum_{\ell = 1}^2 \verti{u^{(\ell)}}_*^p + \frac{1}{C} \int_0^\infty t^\mu \big(\verti{\partial_t u^{(2)}}^p_{k-2,\alpm}+\verti{u^{(2)}}^p_{k+2,\alp}\big) \, \d t \\
& \quad \le \delta_{\mu,0} \verti{u^{(0)}}_*^p + \mu \int_0^\infty t^{\mu-1} \sum_{\ell = 1}^2 \verti{u^{(\ell)}}_*^p \, \d t + C \int_0^\infty t^\mu \verti{f}^p_{k-2,\alpm} \, \d t. \label{est-u2}
\end{align}
Furthermore, 
\begin{align*}
\int_0^\infty t^\mu \big(\verti{\partial_t u^{(1)}}_{k-2,\alp-1}^p + \verti{u^{(1)}}_{k+2,\alp}^p\big) \, \d t \stackrel{\eqref{eq_ineq_hardy}, \eqref{hom_eq}}{\lesssim_{k,\alpha}} \int_0^\infty t^\mu \verti{A e^{t A} u^{(0)}}_{k-2,\alp-1}^p \, \d t,
\end{align*}
where Lemma~\ref{lem_ineq_hardy} was used. For $\mu = 0$ the right-hand side simply equals $\verti{u^{(0)}}_*^p$ (cf.~\eqref{init-lambda-scale}). For $\mu > 0$ we obtain through integration by parts for $T \in (0,\infty)$,
\begin{align*}
\int_0^T t^\mu \verti{A e^{t A} u^{(0)}}_{k-2,\alp-1}^p \, \d t =& - t^\mu \int_t^T \verti{A e^{\tau A} u^{(0)}}_{k-2,\alp-1}^p \d\tau \Big|_{t = 0}^T \\
& + \mu \int_0^T t^{\mu-1} \int_t^T \verti{A e^{\tau A} u^{(0)}}_{k-2,\alp-1}^p \d\tau \, \d t \\
= & \mu \int_0^T t^{\mu-1} \int_0^{T-t} \verti{A e^{\tau A} u^{(1)}}_{k-2,\alp-1}^p \d\tau \, \d t,
\end{align*}
so that in the limit $T \to \infty$ we get
\[
\int_0^\infty t^\mu \verti{A e^{t A} u^{(0)}}_{k-2,\alp-1}^p \, \d t = \mu \int_0^\infty t^{\mu-1} \int_0^\infty \verti{A e^{\tau A} u^{(1)}}_{k-2,\alp-1}^p \d\tau \, \d t = \mu \int_0^\infty t^{\mu-1} \verti{u^{(1)}}_*^p \, \d t.
\]
Hence, estimate~\eqref{est-u2} upgrades to
\begin{align*}
& \sum_{\ell = 1}^2 \Big(\sup_{t \ge 0} t^\mu \verti{u^{(\ell)}(t)}_*^p + \int_0^\infty t^\mu \big(\verti{\partial_t u^{(\ell)}}^p_{k-2,\alpm}+\verti{u^{(\ell)}}^p_{k+2,\alp}\big) \, \d t\Big) \\
& \quad \lesssim_{k,\alpha,p} \delta_{\mu,0} \verti{u^{(0)}}_*^p + \mu \sum_{\ell = 1}^2 \int_0^\infty t^{\mu-1} \verti{u^{(\ell)}}_*^p \, \d t + \int_0^\infty t^\mu \verti{f}^p_{k-2,\alpm} \, \d t. \label{est-u-u2}
\end{align*}
With help of \eqref{init-lambda-scale}, we end up with \eqref{eq_second_MR_estimate}.

\medskip

The uniqueness statement of the proposition follows from uniqueness under the condition that $\partial_t u^{(\ell)} \in L^p(0,\infty;H_{k_m-2,\alpha_m - 1})$ and $u^{(\ell)} \in L^p(0,\infty;H_{k_m+2,\alpha_m})$, $\ell \in \{1,2\}$, for an $m \in \{1,\ldots,M\}$ such that $\mu_m = 0$.
\end{proof}
%

\subsection{Higher Regularity}\label{sec_higher_regularity}
Here, we derive estimates that are suitable for treating the nonlinear equation in \S\ref{sec-nonlinear}. The key ingredient is the elliptic regularity of the operator $A$. Before we start with the rigorous setting, let us first give some heuristic ideas. Suppose that we have a sufficiently regular solution $u$ of \eqref{eq_full_cauchy}. Formally applying the operator $A$ to the Cauchy problem \eqref{eq_full_cauchy} gives
\begin{subequations}\label{eq_full_cauchy_extra_A}
\begin{align}
    \partial_t (Au)-A(Au) &= Af && \text{for } t,x > 0,\\
    (Au)|_{t=0} &=Au^{(0)} && \text{for } x > 0.
\end{align}
\end{subequations}
Then $Au$ solves \eqref{eq_full_cauchy_extra_A} and we can find $u$ from this solution by inverting $A$. In the proof we will argue in the opposite direction by finding a solution to
\begin{subequations}\label{eq_full_cauchy_v}
\begin{alignat}{2}
    \partial_t v-Av &= g\\
    v|_{t=0} &=v^{(0)},
\end{alignat}
\end{subequations}
where $v^{(0)}:=Au^{(0)}$ and $g:=Af$ are given, and $A$ will be inverted to find $u$ satisfying $A u = v$. The following inversion lemma is essential (which is analogous to \cite[Lemmata~7.2 and 7.4]{GGKO} or \cite[Proposition~3.1]{G2016} with a simplified proof):
\begin{lemma}[Elliptic Regularity]\label{lem-invert-A}
Suppose that $\ell_1,\ell_2,\ell_3 \ge 0$, let $\gamma_1 < \gamma_2 < 0$ be the two negative roots of $\mathfrak p(\zeta)$ given by \eqref{eq_zeros_12}, and assume that $\gamma_2 < \alpha_1 < 0 < \alpha_2 < \beta < \alpha_3$.
\begin{enumerate}
\item\label{invert-A-1} Suppose that $v \in H_{\ell_1,\alpha_1-1} \cap H_{\ell_2,\alpha_2-1}$. Then
\begin{subequations}\label{def-u-u0-ub-invert}
\begin{align}
u(x) &:= (Bv)(x) := - x^{\gamma_1} \int_0^x x_1^{\gamma_2-\gamma_1} \int_0^{x_1} x_2^{-\gamma_2} \int_{x_2}^\infty x_3^\beta \int_{x_3}^\infty x_4^{1-\beta} v(x_4) \, \tfrac{\d x_4}{x_4} \, \tfrac{\d x_3}{x_3} \, \tfrac{\d x_2}{x_2} \, \tfrac{\d x_1}{x_1}, \label{def-u-invert} \\
u_0 &:= - \tfrac{1}{\gamma_1 \gamma_2} \int_0^\infty x_1^\beta \int_{x_1}^\infty x_2^{1-\beta} v(x_2) \, \tfrac{\d x_2}{x_2} \, \tfrac{\d x_1}{x_1}, \label{def-u0-invert}
\end{align}
\end{subequations}
are well-defined with
\begin{equation}
u(x) \to u_0 \quad \text{as } x \searrow 0, \label{limit-u-invert-0} \end{equation}
and it holds
\begin{equation}\label{eq-au-v}
AB v = A u = A (u-u_0) = v,
\end{equation}
and
\begin{subequations}\label{u-inv-v-est}
\begin{align}
\verti{u}_{\ell_1+4,\alpha_1} &\sim_{\ell_1,\alpha_1} \verti{v}_{\ell_1,\alpha_1-1}, \label{u-inv-v-1} \\
\verti{u - u_0}_{\ell_2+4,\alpha_2} &\sim_{\ell_2,\alpha_2} \verti{v}_{\ell_2,\alpha_2-1}. \label{u-inv-v-2}
\end{align}
\end{subequations}

\item\label{invert-A-2} In the situation of \eqref{invert-A-1} additionally suppose that $v \in H_{\ell_3,\alpha_3-1}$. Then
\begin{equation}\label{def-ub-invert}
u_\beta := \tfrac{1}{\beta (\beta - \gamma_1) (\beta - \gamma_2)} \int_0^\infty x_1^{1-\beta} v(x_1) \, \tfrac{\d x_1}{x_1}
\end{equation}
is well-defined,
\begin{equation}
x^{-\beta} (u(x)-u_0) \to u_\beta \quad \text{as } x \searrow 0, \label{limit-u-invert-beta}
\end{equation}
and it holds
\begin{equation}\label{eq-au-v-beta}
A (u - u_0 - u_\beta x^\beta) = v
\end{equation}
and
\begin{equation}\label{u-inv-v-3}
\verti{u - u_0 - u_\beta x^\beta}_{\ell_3+4,\alpha_3} \sim_{\ell_3,\alpha_3} \verti{v}_{\ell_3,\alpha_3-1}.
\end{equation}

\item\label{invert-A-3} If $u \colon (0,\infty) \to \R$ is locally integrable and $u_0 \in \R$ is a real number such that $\verti{u}_{\ell_1+4,\alpha_1}$ and $\verti{u - u_0}_{\ell_2+4,\alpha_2}$ are finite, then
\begin{equation}\label{left-inv-a}
B A u = B A (u-u_0) = B A (u - u_0 - c x^\beta) = u,
\end{equation}
where $c \in \R$ is any real number.
\end{enumerate}
\end{lemma}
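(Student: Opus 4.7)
The proof rests on the factorization $p(D) = (D-\gamma_1)(D-\gamma_2)D(D-\beta)$ (with $\gamma_3 = 0$ and $\gamma_4 = \beta$), which identifies \eqref{def-u-invert} as the composition of four first-order antiderivatives successively inverting each factor of $-x^{-1} p(D)$. Introducing the intermediate integrals
\[
w_4(x) := \int_x^\infty x_4^{1-\beta} v(x_4) \, \tfrac{\d x_4}{x_4}, \qquad w_3(x) := \int_x^\infty x_3^\beta w_4(x_3) \, \tfrac{\d x_3}{x_3},
\]
\[
w_2(x) := \int_0^x x_2^{-\gamma_2} w_3(x_2) \, \tfrac{\d x_2}{x_2}, \qquad w_1(x) := \int_0^x x_1^{\gamma_2-\gamma_1} w_2(x_1) \, \tfrac{\d x_1}{x_1},
\]
so that $u = -x^{\gamma_1} w_1$, the commutation $D \circ x^\gamma = x^\gamma \circ (D+\gamma)$ gives in turn $(D-\beta)(x^\beta w_4) = -xv$, $D(-w_3) = x^\beta w_4$, $(D-\gamma_2)(x^{\gamma_2} w_2) = w_3$, and $(D-\gamma_1) u = -x^{\gamma_2} w_2$, so $p(D) u = -xv$ and \eqref{eq-au-v} holds pointwise. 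This is the algebraic core of the lemma and is routine.

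The weighted estimates and well-definedness of the nested integrals reduce, factor by factor, to iterated Hardy inequalities for the scalar inverses $(D-\gamma)^{-1}$: integration from $0$ is $H_{0,\alpha}$-bounded whenever $\alpha > \gamma$, from $\infty$ whenever $\alpha < \gamma$. Under $\gamma_2 < \alpha_1 < 0 < \alpha_2 < \beta < \alpha_3$, the four directions chosen in \eqref{def-u-invert} are exactly the admissible ones for the weight $\alpha_1$, which yields $\verti{u}_{\alpha_1} \lesssim \verti{v}_{\alpha_1-1}$; the upgrade to \eqref{u-inv-v-1} for $\ell_1 \ge 1$ and the reverse inequality follow from Lemma~\ref{lem_ineq_hardy} applied to $Au = v$. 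For the weight $\alpha_2 > 0$ the direction chosen in $w_3$ is the wrong one, but converting $\int_x^\infty$ into $-\int_0^x$ merely adds the constant $w_3(0)$; threading this constant through the two outer integrations yields exactly $-u_0 = w_3(0)/(\gamma_1\gamma_2)$, which matches \eqref{def-u0-invert} after a Fubini swap and produces the limit \eqref{limit-u-invert-0} by dominated convergence, while a second iterated Hardy on the repaired formula for $u - u_0$ gives \eqref{u-inv-v-2}. For part~\eqref{invert-A-2} one flips $\int_x^\infty$ into $-\int_0^x$ also in $w_4$, extracting a further $u_\beta x^\beta$ correction with $u_\beta = w_4(0)/[\beta(\beta-\gamma_1)(\beta-\gamma_2)] = w_4(0)/p'(\beta)$, matching \eqref{def-ub-invert}; identity \eqref{eq-au-v-beta} then holds because $A$ annihilates both $u_0$ and $u_\beta x^\beta$ (as $p(0) = p(\beta) = 0$), and \eqref{u-inv-v-3} follows again by iterated Hardy together with Lemma~\ref{lem_ineq_hardy}.

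For part~\eqref{invert-A-3}, the kernel of $p(D)$ is spanned by $\{1, x^\beta, x^{\gamma_1}, x^{\gamma_2}\}$, and no non-trivial linear combination of these pure powers lies in $H_{0,\alpha_1}$ because $\alpha_1$ avoids each root and the integrability requirements at $0$ and at $\infty$ for $x^\gamma$ pull in opposite directions. Consequently the hypothesis $\verti{u}_{\ell_1+4,\alpha_1} < \infty$ selects the unique representative $Bv$ within the affine solution set of $A(\cdot) = Au$, so $u = BAu$; the equalities $BAu = BA(u-u_0) = BA(u-u_0-cx^\beta)$ follow from $A \cdot 1 = A(x^\beta) = 0$. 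The main obstacle throughout will be the bookkeeping: correctly matching each integration direction to the weight at hand and verifying that each boundary-trace flip cleanly extracts the next term in the singular expansion at $x = 0$. Everything else reduces to iterated Hardy estimates and the elliptic inequality already established in Lemma~\ref{lem_ineq_hardy}.
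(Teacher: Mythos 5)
Your proposal is correct and, for parts~\eqref{invert-A-1} and \eqref{invert-A-2}, follows essentially the same route as the paper: the factorization of $B$ into four first-order Hardy operators, the rule that integration from $0$ (resp.\ $\infty$) is bounded on $H_{0,\alpha}$ for $\alpha>\gamma$ (resp.\ $\alpha<\gamma$), and the extraction of $u_0$ and $u_\beta$ by flipping the integration limits in $w_3$ and $w_4$ are exactly the mechanism of the paper's iterated Hardy argument, including the correct identification $u_0=-w_3(0)/(\gamma_1\gamma_2)$ and $u_\beta=w_4(0)/p'(\beta)$. Two minor imprecisions: the upgrade from the zeroth-order bound to \eqref{u-inv-v-1} is handled in the paper by an interpolation inequality of the form $\verti{D^{\ell_1+4}u}_{\alpha_1}-C_1\verti{u}_{\alpha_1}\le\verti{v}_{\ell_1,\alpha_1-1}$ rather than by citing Lemma~\ref{lem_ineq_hardy} (whose hypotheses presuppose $u\in H_{\ell_1,\alpha_1-1}$, which is not available here); and well-definedness of the innermost integrals for $\ell_j=0$ requires a Cauchy--Schwarz step to convert the $L^2$-weighted bound on $v$ into decay of $\int_{x_3}^\infty x_4^{1-\beta}v\,\tfrac{\d x_4}{x_4}$. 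For part~\eqref{invert-A-3} you genuinely deviate: the paper computes $BAu$ directly by four integrations by parts, using the decay of $u$, $(D-\gamma_1)u$, etc.\ implied by the finiteness of $\verti{u}_{\ell_1+4,\alpha_1}$ and $\verti{u-u_0}_{\ell_2+4,\alpha_2}$, whereas you observe that $u-BAu$ lies in the kernel $\mathrm{span}\{1,x^{\beta},x^{\gamma_1},x^{\gamma_2}\}$ of the Euler operator $p(D)$ and that no nontrivial such combination has finite $\verti{\cdot}_{\alpha_1}$-norm, since the integrability conditions at $0$ and $\infty$ for a pure power are incompatible. Your kernel argument is shorter and avoids the boundary-term bookkeeping, at the cost of invoking the ODE structure (distinctness of the roots for $n\ne\tfrac32$ and hypoellipticity of $p(D)$); the paper's computation is more pedestrian but self-contained and makes transparent exactly which decay hypotheses are used.
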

\begin{proof}
\textbf{Proof of \eqref{invert-A-1}.} We detail why $u$ and $u_0$, defined by \eqref{def-u-u0-ub-invert}, are well-defined. Indeed, it holds $x_4^{1-\beta} \verti{v(x_4)} = o(x_4^{\alpha_1-\beta})$ almost everywhere as $x_4 \to \infty$, so that because of $\alpha_1 < \beta$ the first integral $\int_{x_3}^\infty x_4^{1-\beta} \verti{v(x_4)} \, \tfrac{\d x_4}{x_4}$ is well-defined and $\int_{x_3}^\infty x_4^{1-\beta} \verti{v(x_4)} \, \tfrac{\d x_4}{x_4} = o(x_3^{\alpha_1-\beta})$ as $x_3 \to \infty$. Furthermore, $x_4^{1-\beta} \verti{v(x_4)} = o(x_4^{\alpha_2-\beta})$ almost everywhere as $x_4 \searrow 0$ and thus $\int_{x_3}^\infty x_4^{1-\beta} \verti{v(x_4)} \, \tfrac{\d x_4}{x_4} = o(x_3^{\alpha_2-\beta})$ as $x_3 \searrow 0$. Hence, we have
\[
\int_{x_2}^\infty x_3^\beta \int_{x_3}^\infty x_4^{1-\beta} \verti{v(x_4)} \, \tfrac{\d x_4}{x_4} \, \tfrac{\d x_3}{x_3} = o(x_2^{\alpha_1}) \quad \text{as } x_2 \to \infty
\]
and
\[
\int_0^\infty x_3^\beta \int_{x_3}^\infty x_4^{1-\beta} \verti{v(x_4)} \, \tfrac{\d x_4}{x_4} \, \tfrac{\d x_3}{x_3} \le \int_1^\infty o(x_3^{\alpha_1}) \, \tfrac{\d x_3}{x_3} + \int_0^1 o(x_3^{\alpha_2}) \, \tfrac{\d x_3}{x_3} < \infty,
\]
so that $u_0$ given by \eqref{def-u0-invert} is well-defined. Next, observe that
\begin{align*}
\int_0^{x_1} x_2^{-\gamma_2} \int_{x_2}^\infty x_3^\beta \int_{x_3}^\infty x_4^{1-\beta} \verti{v(x_4)} \, \tfrac{\d x_4}{x_4} \, \tfrac{\d x_3}{x_3} \, \tfrac{\d x_2}{x_2} &\le \int_0^{x_1} x_2^{-\gamma_2} \int_0^\infty x_3^\beta \int_{x_3}^\infty x_4^{1-\beta} \verti{v(x_4)} \, \tfrac{\d x_4}{x_4} \, \tfrac{\d x_3}{x_3} \, \tfrac{\d x_2}{x_2} \\
&\phantom{\le} \begin{cases}
= O(x_1^{-\gamma_2}) & \text{as } x_1 \searrow 0, \\
< \infty & \text{for all } x_1 > 0.
\end{cases}
\end{align*}
Thus,
\[
\int_0^x x_1^{\gamma_2-\gamma_1} \int_0^{x_1} x_2^{-\gamma_2} \int_{x_2}^\infty x_3^\beta \int_{x_3}^\infty x_4^{1-\beta} \verti{v(x_4)} \, \tfrac{\d x_4}{x_4} \, \tfrac{\d x_3}{x_3} \, \tfrac{\d x_2}{x_2} \, \tfrac{\d x_1}{x_1} \begin{cases} = O(x^{-\gamma_1}) & \text{as } x \searrow 0, \\
< \infty & \text{for all } x > 0.
\end{cases}
\]
Hence, $u$ given by \eqref{def-u-invert} is well-defined and finite. Applying the operator $A$ to $u$ and using \eqref{eq_def_p(D)}, \eqref{eq_zeros_12}, and \eqref{a-op-pd}, we infer that $A x^0 = 0$ and that indeed \eqref{eq-au-v} is satisfied.

\medskip

For investigating the limiting behavior, observe that
\begin{align*}
u(x) \ \ &\stackrel{\mathclap{\eqref{def-u-invert}}}{=} \ \ - \int_0^1 r_1^{\gamma_2-\gamma_1} \int_0^{r_1} r_2^{-\gamma_2} \int_{x r_2}^\infty x_3^\beta \int_{x_3}^\infty x_4^{1-\beta} v(x_4) \, \tfrac{\d x_4}{x_4} \, \tfrac{\d x_3}{x_3} \, \tfrac{\d r_2}{r_2} \, \tfrac{\d r_1}{r_1} \\
&\to - \Big(\int_0^1 r_1^{\gamma_2-\gamma_1} \int_0^{r_1} r_2^{-\gamma_2} \, \tfrac{\d r_2}{r_2} \, \tfrac{\d r_1}{r_1} \Big) \Big(\int_0^\infty x_3^\beta \int_{x_3}^\infty x_4^{1-\beta} v(x_4) \, \tfrac{\d x_4}{x_4} \, \tfrac{\d x_3}{x_3}\Big) \stackrel{\eqref{def-u0-invert}}{=} u_0 \quad \text{as } x \searrow 0,
\end{align*}
which shows \eqref{limit-u-invert-0}.

\medskip

In order to prove \eqref{u-inv-v-est} by interpolation (cf.~\eqref{sobolev-frac}), we can restrict ourselves to $\ell_j \in \N_0$. Further note that estimating $v$ against $u$ or $u-u_0$, respectively, is immediate by \eqref{eq-au-v}. By a standard interpolation argument it follows that
\begin{align*}
\verti{D^{\ell_1+4} u}_{\alpha_1} - C_1 \verti{u}_{\alpha_1} &\le 2 \verti{v}_{\ell_1,\alpha_1-1}, \\
\verti{D^{\ell_2+4} (u-u_0)}_{\alpha_2} - C_2 \verti{u-u_0}_{\alpha_2} &\le 2 \verti{v}_{\ell_2,\alpha_2-1},
\end{align*}
where $C_j$ only depends on $\ell_j$ and $\alpha_j$. Hence, it suffices to prove
\begin{subequations}\label{hardy-4th}
\begin{align}
\verti{u}_{\alpha_1} &\lesssim_{\ell_1,\alpha_1} \verti{v}_{\alpha_1-1}, \label{hardy-4th-u} \\
\verti{u-u_0}_{\alpha_2} &\lesssim_{\ell_2,\alpha_2} \verti{v}_{\alpha_2-1}. \label{hardy-4th-u-u0}
\end{align}
\end{subequations}
Therefore, observe because of $\verti{v}_{\alpha_1-1} < \infty$,
\begin{align*}
u \ \ &\stackrel{\mathclap{\eqref{limit-u-invert-0}}}{\to} \ \ u_0 && \text{as } x \searrow 0, \\
(D-\gamma_1) u \ \ &\stackrel{\mathclap{\eqref{def-u-invert}}}{=} \ \ - \int_0^1 r_1^{-\gamma_2} \int_{x r_1}^\infty x_2^\beta \int_{x_2}^\infty x_2^{1-\beta} v(x_3) \, \tfrac{\d x_3}{x_3} \, \tfrac{\d x_2}{x_2} \, \tfrac{\d r_1}{r_1} \\
&\to \ \ \tfrac{1}{\gamma_2} \int_0^\infty x_1^\beta \int_{x_1}^\infty x_2^{1-\beta} v(x_2) \, \tfrac{\d x_2}{x_2} \, \tfrac{\d x_1}{x_1} \stackrel{\eqref{def-u0-invert}}{=} - \gamma_1 u_0 && \text{as } x \searrow 0, \\
(D-\gamma_2) (D-\gamma_1) u  \ \ &\stackrel{\mathclap{\eqref{def-u-invert}}}{=} \ \ - \int_x^\infty x_1^\beta \int_{x_1}^\infty x_2^{1-\beta} v(x_2) \, \tfrac{\d x_2}{x_2} \, \tfrac{\d x_1}{x_1} \to 0 && \text{as } x \to \infty, \\
D (D-\gamma_2) (D-\gamma_1) u  \ \ &\stackrel{\mathclap{\eqref{def-u-invert}}}{=} \ \   x^\beta \int_x^\infty x_1^{1-\beta} v(x_1) \, \tfrac{\d x_1}{x_1} = o(x^{\alpha_1}) && \text{as } x \to \infty,
\end{align*}
where we have used that $u_0$ given by \eqref{def-u0-invert} is well-defined in the second-but-last limit. By using Hardy's inequality (see for instance \cite[Lemma~A.1]{GiacomelliKnuepferOtto2008}), we thus obtain
\[
\verti{u}_{\alpha_1} = \verti{x^{\gamma_1-\alpha_1} (x^{-\gamma_1} u)}_{0} \lesssim_{\alpha_1} \verti{x^{\gamma_1-\alpha_1} D (x^{-\gamma_1} u)}_{0} = \verti{(D-\gamma_1) u}_{\alpha_1},
\]
where we have used that $\gamma_1 < \alpha_1 < 0$ and $x^{-\gamma_1} u \to 0$ as $x \searrow 0$. Proceeding analogously, we infer
\[
\verti{(D-\gamma_1) u}_{\alpha_1} = \verti{x^{\gamma_2-\alpha_1} (x^{-\gamma_2}(D-\gamma_1) u)}_0 \lesssim_{\alpha_1} \verti{x^{\gamma_2-\alpha_1} D (x^{-\gamma_2}(D-\gamma_1) u)}_0 = \verti{(D-\gamma_2) (D-\gamma_1) u}_{\alpha_1},
\]
where we have used $\gamma_2 < \alpha_1 < 0$ and $x^{-\gamma_2} (D-\gamma_1) u \to 0$ as $x \searrow 0$. Furthermore,
\begin{align*}
\verti{(D-\gamma_2) (D-\gamma_1) u}_{\alpha_1} &= \verti{x^{-\alpha_1} (D-\gamma_2) (D-\gamma_1) u}_0 \\
&\lesssim_{\alpha_1} \verti{x^{-\alpha_1} D (D-\gamma_2) (D-\gamma_1) u}_0 = \verti{D (D-\gamma_2) (D-\gamma_1) u}_{\alpha_1}
\end{align*}
where we have used $\alpha_1 < 0$ and $(D-\gamma_2) (D-\gamma_1) u \to 0$ as $x \to \infty$. Additionally,
\begin{align*}
\verti{D (D-\gamma_2) (D-\gamma_1) u}_{\alpha_1} &= \verti{x^{\beta-\alpha_1} (x^{-\beta} D (D-\gamma_2) (D-\gamma_1) u)}_0 \\
&\lesssim_{\alpha_1} \verti{x^{\beta-\alpha_1} D (x^{-\beta} D (D-\gamma_2) (D-\gamma_1) u)}_0 \\
&= \verti{(D-\beta) D (D-\gamma_2) (D-\gamma_1) u}_{\alpha_1}
\end{align*}
where we have used $\alpha_1 < 0$ and $x^{-\beta} D (D-\gamma_2) (D-\gamma_1) u \to 0$ as $x \to \infty$. Thus,
\begin{align*}
\verti{u}_{\alpha_1} \lesssim_{\alpha_1} \verti{(D-\beta) D (D-\gamma_2) (D-\gamma_1) u}_{\alpha_1} \stackrel{\eqref{eq_def_p(D)}, \eqref{eq_zeros_12}}{=} \verti{\mathfrak p(D) u}_{\alpha_1} \stackrel{\eqref{a-op-pd}}{=} \verti{A u}_{\alpha_1-1} \stackrel{\eqref{eq-au-v}}{=} \verti{v}_{\alpha_1-1},
\end{align*}
which proves \eqref{hardy-4th-u}.

\medskip

From \eqref{def-u-u0-ub-invert} we infer because of $\verti{v}_{\alpha_1-1} < \infty$ and $\verti{v}_{\alpha_2-1} < \infty$,
\begin{equation}\label{rep-u-u0}
u - u_0 = x^{\gamma_1} \int_0^x x_1^{\gamma_2-\gamma_1} \int_0^{x_1} x_2^{-\gamma_2} \int_0^{x_2} x_3^\beta \int_{x_3}^\infty x_4^{1-\beta} v(x_4) \, \tfrac{\d x_4}{x_4} \, \tfrac{\d x_3}{x_3} \, \tfrac{\d x_2}{x_2} \, \tfrac{\d x_1}{x_1},
\end{equation}
so that
\begin{align*}
\verti{u - u_0} &\le \int_0^1 r_1^{\gamma_2-\gamma_1} \int_0^{r_1} r_2^{-\gamma_2} \int_0^{x r_2} x_3^\beta \int_{x_3}^\infty x_4^{1-\beta} \verti{v(x_4)} \, \tfrac{\d x_4}{x_4} \, \tfrac{\d x_3}{x_3} \, \tfrac{\d r_2}{r_2} \, \tfrac{\d r_1}{r_1} \\
&= \tfrac{1}{\gamma_1 \gamma_2} \int_0^x \int_{x_1}^\infty x_2^{1-\beta} \verti{v(x_2)} \, \tfrac{\d x_2}{x_2} \, \tfrac{\d x_1}{x_1} = o(x^{\alpha_2}) &&\hspace{-.7in} \text{as } x \searrow 0, \\
\verti{(D-\gamma_1) (u-u_0)} &\le \int_0^1 r_1^{-\gamma_2} \int_{0}^{x r_1} x_2^\beta \int_{x_2}^\infty x_3^{1-\beta} \verti{v(x_3)} \, \tfrac{\d x_3}{x_3} \, \tfrac{\d x_2}{x_2} \, \tfrac{\d r_1}{r_1} \\
&= - \tfrac{1}{\gamma_2} \int_0^x \int_{x_1}^\infty x_2^{1-\beta} \verti{v(x_2)} \, \tfrac{\d x_2}{x_2} \, \tfrac{\d x_1}{x_1} = o(x^{\alpha_2}) &&\hspace{-.7in} \text{as } x \searrow 0, \\
\verti{(D-\gamma_2) (D-\gamma_1) (u-u_0)} &\le \int_0^x x_1^\beta \int_{x_1}^\infty x_2^{1-\beta} \verti{v(x_2)} \, \tfrac{\d x_2}{x_2} \, \tfrac{\d x_1}{x_1} = o(x^{\alpha_2}) &&\hspace{-.7in} \text{as } x \searrow 0, \\
\verti{D (D-\gamma_2) (D-\gamma_1) (u-u_0)}  &\le x^\beta \int_x^\infty x_1^{1-\beta} \verti{v(x_1)} \, \tfrac{\d x_1}{x_1} = o(x^{\alpha_1}) &&\hspace{-.7in} \text{as } x \to \infty.
\end{align*}
By iteratively applying Hardy's inequality as before, taking the limiting behavior as $x \searrow 0$ or $x \to \infty$, respectively, into account, we thus obtain
\begin{align*}
\verti{u-u_0}_{\alpha_2} &\lesssim_{\alpha_2} \verti{(D-\gamma_1) (u-u_0)}_{\alpha_2} \lesssim_{\alpha_2} \verti{(D-\gamma_2) (D-\gamma_1) (u-u_0)}_{\alpha_2} \\
&\lesssim_{\alpha_2} \verti{D (D-\gamma_2) (D-\gamma_1) (u-u_0)}_{\alpha_2} \lesssim_{\alpha_2} \verti{(D-\beta) D (D-\gamma_2) (D-\gamma_1) (u-u_0)}_{\alpha_2} \\
&\stackrel{\mathclap{\eqref{eq_def_p(D)}, \eqref{eq_zeros_12}}}{=} \ \ \quad \verti{\mathfrak p(D) u}_{\alpha_2} \stackrel{\eqref{a-op-pd}}{=} \verti{A u}_{\alpha_2-1} \stackrel{\eqref{eq-au-v}}{=} \verti{v}_{\alpha_2-1},
\end{align*}
which proves \eqref{hardy-4th-u-u0}.

\medskip

\textbf{Proof of \eqref{invert-A-2}.} We have $x_1^{1-\beta} \verti{v(x_1)} = o(x_1^{\alpha_1-\beta})$ almost everywhere as $x_1 \to \infty$ and $x_1^{1-\beta} \verti{v(x_1)} = o(x_1^{\alpha_3-\beta})$ almost everywhere as $x_1 \searrow 0$, so that
\[
\int_0^\infty x_1^{1-\beta} \verti{v(x_1)} \, \tfrac{\d x_1}{x_1} \le \int_0^1 o(x_1^{\alpha_3-\beta}) \, \tfrac{\d x_1}{x_1} + \int_1^\infty o(x_1^{\alpha_1-\beta}) \, \tfrac{\d x_1}{x_1} < \infty,
\]
and thus $u_\beta$ given by \eqref{def-ub-invert} is well-defined. The limit \eqref{limit-u-invert-beta} follows from
\begin{align*}
x^{-\beta} (u(x)-u_0) \ \ &\stackrel{\mathclap{\eqref{rep-u-u0}}}{=} \ \ \int_0^1 r_1^{\gamma_2-\gamma_1} \int_0^{r_1} r_2^{-\gamma_2} \int_0^{r_2} r_3^\beta \int_{x r_3}^\infty x_4^{1-\beta} v(x_4) \, \tfrac{\d x_4}{x_4} \, \tfrac{\d r_3}{r_3} \, \tfrac{\d r_2}{r_2} \, \tfrac{\d r_1}{r_1} \\
&\to \Big(\int_0^1 r_1^{\gamma_2-\gamma_1} \int_0^{r_1} r_2^{-\gamma_2} \int_0^{r_2} r_3^\beta \, \tfrac{\d r_3}{r_3} \, \tfrac{\d r_2}{r_2} \, \tfrac{\d r_1}{r_1}\Big) \Big(\int_0^\infty x_4^{1-\beta} v(x_4) \, \tfrac{\d x_4}{x_4}\Big) \stackrel{\eqref{def-ub-invert}}{=} u_\beta
\end{align*}
as $x \searrow 0$. Because of \eqref{eq-au-v} and $A x^\beta = 0$ by \eqref{eq_def_p(D)}, \eqref{eq_zeros_12} and \eqref{def-a-op}, \eqref{eq-au-v-beta} follows.

\medskip

In order to prove the non-trivial estimate in \eqref{u-inv-v-3}, again by interpolation (cf.~\eqref{sobolev-frac}) we assume without loss of generality $\ell_3 \in \N_0$. In view of the interpolation inequality
\[
\verti{D^{\ell_3+4} (u-u_0-u_\beta x^\beta)}_{\alpha_3} - C_3 \verti{u-u_0-u_\beta x^\beta}_{\alpha_3} \le 2 \verti{v}_{\ell_3,\alpha_3-1}
\]
where $C_3 < \infty$ only depends on $\ell_3$ and $\alpha_3$, we only need to prove
\begin{equation}\label{hardy-4th-u-u0-ubeta}
\verti{u-u_0-u_\beta x^\beta}_{\alpha_3} \lesssim_{\ell_3,\alpha_3} \verti{v}_{\alpha_3-1}.
\end{equation}
From \eqref{def-ub-invert} and \eqref{rep-u-u0} we obtain
\[
u - u_0 - u_\beta x^\beta = - x^{\gamma_1} \int_0^x x_1^{\gamma_2-\gamma_1} \int_0^{x_1} x_2^{-\gamma_2} \int_0^{x_2} x_3^\beta \int_0^{x_3} x_4^{1-\beta} v(x_4) \, \tfrac{\d x_4}{x_4} \, \tfrac{\d x_3}{x_3} \, \tfrac{\d x_2}{x_2} \, \tfrac{\d x_1}{x_1},
\]
so that because of $\verti{v}_{\alpha_3-1} < \infty$ we have
\begin{align*}
u - u_0 - u_\beta x^\beta &= o(x^{\alpha_3}) &&\hspace{-.2in} \text{as } x \searrow 0, \\
(D-\gamma_1) (u-u_0-u_\beta x^\beta) &= - x^{\gamma_2} \int_0^x x_1^{-\gamma_2} \int_0^{x_1} x_2^\beta \int_0^{x_2} x_3^{1-\beta} v(x_3) \, \tfrac{\d x_3}{x_3} \, \tfrac{\d x_2}{x_2} \, \tfrac{\d x_1}{x_1} \\
&= o(x^{\alpha_3}) &&\hspace{-.2in} \text{as } x \searrow 0, \\
(D-\gamma_2) (D-\gamma_1) (u-u_0-u_\beta x^\beta) &= - \int_0^x x_1^\beta \int_0^{x_1} x_2^{1-\beta} v(x_2) \, \tfrac{\d x_2}{x_2} \, \tfrac{\d x_1}{x_1} = o(x^{\alpha_3}) &&\hspace{-.2in} \text{as } x \searrow 0, \\
D (D-\gamma_2) (D-\gamma_1) (u-u_0-u_\beta x^\beta) &= - x^\beta \int_0^x x_1^{1-\beta} v(x_1) \, \tfrac{\d x_1}{x_1} = o(x^{\alpha_3}) &&\hspace{-.2in} \text{as } x \searrow 0.
\end{align*}
An iterative application of Hardy's inequality as in the proof of part~\eqref{invert-A-1}, employing the limiting behavior as $x \searrow 0$, yields
\begin{align*}
\verti{u-u_0-u_\beta x^\beta}_{\alpha_3} &\lesssim_{\alpha_3} \verti{(D-\gamma_1) (u-u_0-u_\beta x^\beta)}_{\alpha_3} \lesssim_{\alpha_3} \verti{(D-\gamma_2) (D-\gamma_1) (u-u_0-u_\beta x^\beta)}_{\alpha_3} \\
&\lesssim_{\alpha_3} \verti{D (D-\gamma_2) (D-\gamma_1) (u-u_0-u_\beta x^\beta)}_{\alpha_3} \\
&\lesssim_{\alpha_3} \verti{(D-\beta) D (D-\gamma_2) (D-\gamma_1) (u-u_0-u_\beta x^\beta)}_{\alpha_3} \\
&\stackrel{\mathclap{\eqref{eq_def_p(D)}, \eqref{eq_zeros_12}}}{=} \ \ \quad \verti{\mathfrak p(D) u}_{\alpha_3} \stackrel{\eqref{a-op-pd}}{=} \verti{A u}_{\alpha_3-1} \stackrel{\eqref{eq-au-v}}{=} \verti{v}_{\alpha_3-1},
\end{align*}
which proves \eqref{hardy-4th-u-u0-ubeta}.

\medskip

\textbf{Proof of \eqref{invert-A-3}.} In order to show \eqref{left-inv-a}, observe that
\[
(B A u)(x) = x^{\gamma_1} \int_0^x x_1^{\gamma_2-\gamma_1} \int_0^{x_1} x_2^{-\gamma_2} \int_{x_2}^\infty x_3^\beta \int_{x_3}^\infty \partial_{x_4} w(x_4) \, \d x_4 \, \tfrac{\d x_3}{x_3} \, \tfrac{\d x_2}{x_2} \, \tfrac{\d x_1}{x_1},
\]
where $w(x_4) := x_4^{-\beta} x_4 \partial_{x_4} (x_4 \partial_{x_4}-\gamma_2) (x_4 \partial_{x_4} - \gamma_1) u(x_4)$. From $\verti{u}_{\ell_1+4,\alpha_1} < \infty$ we infer $w(x_4) = o(x_4^{\alpha_1-\beta})$ as $x_4 \to \infty$, so that
\[
(B A u)(x) = - x^{\gamma_1} \int_0^x x_1^{\gamma_2-\gamma_1} \int_0^{x_1} x_2^{-\gamma_2} \int_{x_2}^\infty \partial_{x_3} \big((x_3 \partial_{x_3}-\gamma_2) (x_3 \partial_{x_3} - \gamma_1) u(x_3)\big) \, \d x_3 \, \tfrac{\d x_2}{x_2} \, \tfrac{\d x_1}{x_1}.
\]
Again, from $\verti{u}_{\ell_1+4,\alpha_1} < \infty$ we infer $(x_3 \partial_{x_3}-\gamma_2) (x_3 \partial_{x_3} - \gamma_1) u(x_3) = o(x_3^{\alpha_1})$ as $x_3 \to \infty$ and thus
\[
(B A u)(x) = x^{\gamma_1} \int_0^x x_1^{\gamma_2-\gamma_1} \int_0^{x_1} \partial_{x_2} \big(x_2^{-\gamma_2} (x_2 \partial_{x_2} - \gamma_1) u(x_2)\big) \, \tfrac{\d x_2}{x_2} \, \tfrac{\d x_1}{x_1}.
\]
Because of $\verti{u - u_0}_{\ell_2+4,\alpha_2} < \infty$ and \eqref{def-ub-invert} we have $x_2^{-\gamma_2} (x_2 \partial_{x_2} - \gamma_1) u(x_2) = - \gamma_1 x_2^{-\gamma_2} u_0 (1+o(1))$ as $x_2 \searrow 0$, that is, we have
\[
(B A u)(x) = x^{\gamma_1} \int_0^x \partial_{x_1} \big(x_1^{-\gamma_1} u(x_1)\big) \, \d x_1.
\]
Again, due to $\verti{u - u_0}_{\ell_2+4,\alpha_2} < \infty$ and \eqref{def-ub-invert} we have $x_1^{-\gamma_1} u(x_1) = x_1^{-\gamma_1} u_0 (1+o(1))$ as $x_1 \searrow 0$, which entails $(B A u)(x) = u(x)$. Hence, using that $1$ and $x^\beta$ are in the kernel of $A$ (cf.~\eqref{eq_def_p(D)}, \eqref{eq_zeros_12}, and \eqref{a-op-pd}), we infer that \eqref{left-inv-a} holds true.
\end{proof}
\begin{lemma}[Elliptic regularity, interpolation norm]\label{lem-invert-A-interp}
Suppose that $1 < p < \infty$, $\tilde k \in \N_0$, $\tilde \delta > 0$, and let $\gamma_1 < \gamma_2 < 0$ be the two negative roots of $\mathfrak p(\zeta)$ given by \eqref{eq_zeros_12}. Further assume $\tilde\delta < \min\{- \gamma_2, \beta\}$. Suppose that $v \in H_{\tilde k + 4 - \frac 4 p, -1-\tilde\delta,p} \cap H_{\tilde k + 4 - \frac 4 p, -1+\tilde\delta,p}$. Then
\begin{subequations}\label{def-u-u0-ub-invert-interp}
\begin{align}
u(x) &:= (Bv)(x) := - x^{\gamma_1} \int_0^x x_1^{\gamma_2-\gamma_1} \int_0^{x_1} x_2^{-\gamma_2} \int_{x_2}^\infty x_3^\beta \int_{x_3}^\infty x_4^{1-\beta} v(x_4) \, \tfrac{\d x_4}{x_4} \, \tfrac{\d x_3}{x_3} \, \tfrac{\d x_2}{x_2} \, \tfrac{\d x_1}{x_1}, \label{def-u-invert-interp} \\
u_0 &:= - \frac{1}{\gamma_1 \gamma_2} \int_0^\infty x_1^\beta \int_{x_1}^\infty x_2^{1-\beta} v(x_2) \, \tfrac{\d x_2}{x_2} \, \tfrac{\d x_1}{x_1}, \label{def-u0-invert-interp}
\end{align}
\end{subequations}
are well-defined with
\begin{equation}\label{limit-u-invert-0-interp}
u(x) \to u_0 \quad \text{as } x \searrow 0,
\end{equation}
and it holds
\begin{equation}\label{eq-au-v-interp}
AB v = A u = A (u-u_0) = v,
\end{equation}
and
\begin{subequations}\label{u-inv-v-est-interp}
\begin{align}
\verti{u}_{\tilde k + 8 - \frac 4 p, -\tilde\delta,p} &\sim_{\tilde k, \tilde\delta,p} \verti{v}_{\tilde k + 4 - \frac 4 p, -1-\tilde\delta,p}, \label{u-inv-v-1-interp} \\
\verti{u - u_0}_{\tilde k + 8 - \frac 4 p, \tilde\delta,p} &\sim_{\tilde k, \tilde\delta,p} \verti{v}_{\tilde k + 4 - \frac 4 p, -1+\tilde\delta,p}. \label{u-inv-v-2-interp}
\end{align}
\end{subequations}
\end{lemma}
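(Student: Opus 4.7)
The plan is to deduce the stated equivalences from the Hilbert-space version of elliptic regularity (Lemma \ref{lem-invert-A}) via the real-interpolation functor.

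\textbf{Step 1 (well-definedness and the identity $ABv = v$).} For $v$ in the intersection $H_{\tilde k + 4 - \frac{4}{p}, -1-\tilde\delta, p} \cap H_{\tilde k + 4 - \frac{4}{p}, -1+\tilde\delta, p}$, I would use the decomposition $v = v_{0} + v_{1}$ afforded by each interpolation pair, where $v_0$ belongs to a first-component endpoint $H_{\tilde k, \alpha - 1}$ and $v_1$ to a second-component endpoint $H_{\tilde k + 4, \alpha}$, with $\alpha = -1 + \frac{1}{p} \pm \tilde\delta$. Applying part \eqref{invert-A-1} of Lemma \ref{lem-invert-A} separately to each summand establishes that the iterated integrals in \eqref{def-u-invert-interp} and \eqref{def-u0-invert-interp} converge absolutely, so $u = Bv$ is pointwise well-defined. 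The identity $ABv = Au = v$ and the limit $u(x) \to u_0$ as $x \searrow 0$ follow directly from \eqref{eq-au-v} and \eqref{limit-u-invert-0} of Lemma \ref{lem-invert-A} applied summand-wise.

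\textbf{Step 2 (the two norm equivalences).} The key structural observation is that Lemma \ref{lem-invert-A} part \eqref{invert-A-1} furnishes $B$ as a continuous isomorphism
\[
    B \colon H_{\ell, \alpha-1} \xrightarrow{\sim} H_{\ell+4, \alpha}
    \qquad \text{for every } \ell \in \N_{0}, \ \gamma_{2} < \alpha < 0,
\]
and the analogous isomorphism (on $u - u_{0}$) for every $0 < \alpha < \beta$. Thus $B$ raises the derivative index by $+4$ and shifts the weight by $+1$, which is precisely the shift built into the definition $H_{k+2-\frac{4}{p}, \alpha - \frac{1}{p}, p} = (H_{k-2, \alpha-1}, H_{k+2, \alpha})_{1-\frac{1}{p}, p}$. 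Instantiating this isomorphism at $\ell = \tilde k$ and $\ell = \tilde k + 4$ and noting that the target endpoint of the first bound coincides with the source endpoint of the second, the real-interpolation functor with parameter $\theta = 1 - \frac{1}{p}$ yields
\[
    B \colon H_{\tilde k + 4 - \frac{4}{p}, -1 \pm \tilde\delta, p} \xrightarrow{\sim} H_{\tilde k + 8 - \frac{4}{p}, \mp \tilde\delta, p},
\]
which is the nontrivial direction of \eqref{u-inv-v-1-interp} and, with $u$ replaced by $u - u_{0}$ in the positive-weight target, of \eqref{u-inv-v-2-interp}. The reverse direction is obtained the same way from the elementary bound $\verti{Au}_{k-2, \alpha-1} \lesssim \verti{u}_{k+2, \alpha}$ at the two endpoint levels, interpolated with parameter $1 - \frac{1}{p}$.

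\textbf{Main obstacle.} The principal technical subtlety is the matching of the paper's two-parameter interpolation spaces (where \emph{both} the derivative count and the weight change between the two endpoints) with the single-weight boundedness statements of Lemma \ref{lem-invert-A}. What makes it work is the simultaneous derivative-and-weight shift effected by $B$, which means that the source/target endpoints of the two elementary instances of Lemma \ref{lem-invert-A} interlock to produce exactly the interpolation spaces of the paper. The fourfold hypothesis on $\tilde\delta$ is precisely what is needed to keep the endpoint $\alpha$-values $-1 + \frac{1}{p} \pm \tilde\delta$ inside $(\gamma_{2}, 0)$ and $\frac{1}{p} \pm \tilde\delta$ inside $(0, \beta)$, so that the correct variant (\eqref{u-inv-v-1} or \eqref{u-inv-v-2}) of Lemma \ref{lem-invert-A} applies at each of the four endpoints. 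A secondary point is the handling of the constant $u_{0}$ (which lies in the kernel of $A$); this is harmless because the negative-weight endpoint of the target interpolation space accommodates constants, while the positive-weight endpoint reads off exactly the $u - u_{0}$ component whose $x^{\beta}$-behavior at the origin is controlled by \eqref{u-inv-v-2}.
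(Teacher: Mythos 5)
Your Step 2 contains a genuine gap at the point you dismiss as a ``secondary point''. To interpolate an operator between the couples $\big(H_{\tilde k,-2+\frac 1p\pm\tilde\delta},H_{\tilde k+4,-1+\frac 1p\pm\tilde\delta}\big)$ and $\big(H_{\tilde k+4,-1+\frac 1p\pm\tilde\delta},H_{\tilde k+8,\frac 1p\pm\tilde\delta}\big)$ you need a \emph{single} linear operator bounded on both endpoints, and there is none here: on the low endpoint (input weight $\alpha-1$ with $\alpha=-1+\frac 1p\pm\tilde\delta<0$) it is $B$ that is bounded, by \eqref{u-inv-v-1}, whereas on the high endpoint (input weight $\alpha-1$ with $\alpha=\frac 1p\pm\tilde\delta>0$) only $v\mapsto Bv-u_0(v)$ is bounded, by \eqref{u-inv-v-2}. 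These are different operators, and the discrepancy cannot be absorbed: constants lie in \emph{no} space $H_{\ell,\alpha}$ (since $\int_0^\infty x^{-2\alpha-1}\,\d x=\infty$ for every $\alpha$), so the negative-weight target endpoint does not ``accommodate constants'', and $Bv$ itself fails to lie in $H_{\tilde k+8,\frac 1p\pm\tilde\delta}$ whenever $u_0\ne 0$ because $Bv\to u_0$ as $x\searrow 0$ while the weight there is positive. A second, independent problem is that your first endpoint instance of Lemma~\ref{lem-invert-A} needs $-1+\frac 1p\pm\tilde\delta\in(\gamma_2,0)$, and the lower bound can fail (e.g.\ for $n$ near $1$ one has $\gamma_2\approx-\tfrac 12$ and $\beta\approx\tfrac 12$, so $\tfrac 1p<\beta$ forces $-1+\tfrac 1p<\gamma_2$); the hypothesis $\tilde\delta<-\gamma_2$ is not designed to control the definitional endpoint weights.

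The paper's proof avoids both issues by a reiteration step: using \cite[Theorem~3.5.3]{Bergh_Loefstroem} it re-realizes $H_{\tilde k+4-\frac 4p,-1\pm\tilde\delta,p}$ as $\big(H_{\tilde k+4-\frac 4p\mp 2\tilde\delta,-1\pm\frac{\tilde\delta}{2}},H_{\tilde k+4-\frac 4p\pm 2\tilde\delta,-1\pm\frac{3\tilde\delta}{2}}\big)_{\frac 12,p}$, so that \emph{both} endpoints correspond to $\alpha$-values $\pm\tfrac{\tilde\delta}{2},\pm\tfrac{3\tilde\delta}{2}$ lying on the same side of $0$ and inside $(\gamma_2,\beta)$ (this is exactly what $\tilde\delta<\min\{-\gamma_2,\beta-\frac 1p,\ldots\}$ buys). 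Then the \emph{same} variant of Lemma~\ref{lem-invert-A} — \eqref{u-inv-v-1} for the minus sign, \eqref{u-inv-v-2} for the plus sign — applies at both endpoints, the $K$-functional is estimated term by term, and a second application of reiteration identifies the resulting space as $H_{\tilde k+8-\frac 4p,\pm\tilde\delta,p}$. Your proof would need to be restructured along these lines (or via some other device, such as quotienting by constants) before Step 2 goes through; your Step 1 should likewise be routed through Lemma~\ref{lemma_char_interpol} to place $v$ in $H_{\tilde k,-1-\delta}\cap H_{\tilde k,-1+\delta}$, since a single summand of an endpoint decomposition does not by itself satisfy the two-weight hypothesis of Lemma~\ref{lem-invert-A}~\eqref{invert-A-1} needed for convergence of the iterated integrals.
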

\begin{proof}
By estimate \eqref{lower_bound_l2eta} of Lemma~\ref{lemma_char_interpol} it follows that $v \in H_{\tilde k,-1-\delta} \cap H_{\tilde k,-1+\delta}$ if $0 < \delta < \tilde \delta$. Then by \eqref{def-u-u0-ub-invert}--\eqref{eq-au-v} of Lemma~\ref{lem-invert-A}~\eqref{invert-A-1} it follows that \eqref{def-u-u0-ub-invert-interp}--\eqref{eq-au-v-interp} hold true. It remains to prove \eqref{u-inv-v-est-interp}.

\medskip

In view of \eqref{def_k,alpha,p_norm} and \eqref{eq-au-v-interp}, estimating $v$ against $u$ and $u-u_0$, respectively, is trivial. In view of the reiteration theorem \cite[Theorem~3.5.3]{Bergh_Loefstroem} and Lemma~\ref{lemma_char_interpol} it holds for $1 > \eps > 0$ (equivalence of norms)
\begin{align}
H_{\tilde k + 4 - \frac 4 p,-1\pm\tilde\delta,p} \; &\stackrel{\mathclap{\eqref{def-trace-space}}}{=} \quad \; \Big(\big(H_{\tilde k,-2+\frac 1 p \pm \tilde\delta},H_{\tilde k+4,-1+\frac 1 p \pm \tilde \delta}\big)_{1-\frac 1 p \mp \eps \tilde\delta,2}, \big(H_{\tilde k,-2+\frac 1 p \pm \tilde\delta},H_{\tilde k+4,-1+\frac 1 p \pm \tilde \delta}\big)_{1-\frac 1 p \pm \eps\tilde\delta,2}\Big)_{\frac 1 2,p} \nonumber\\
&\stackrel{\mathclap{\eqref{sobolev-frac},\eqref{eq_char_interpol}}}{=} \quad \; \big(H_{\tilde k + 4 - \frac 4 p \mp 4\eps \tilde \delta,-1 \pm (1-\eps) \tilde\delta},H_{\tilde k + 4 - \frac 4 p \pm 4\eps \tilde \delta,-1 \pm (1+\eps) \tilde\delta}\big)_{\frac 1 2, p}.\label{eq_interpol}
\end{align}
It then follows with help of the $K$-method
\begin{align*}
\verti{v}_{\tilde k + 4 - \frac 4 p, -1\pm\tilde\delta,p}^p = \int_0^\infty \tau^{-p+1} \Big(\inf_{v = v^{(1)} + v^{(2)}} \big(\verti{v^{(1)}}_{\tilde k + 4 - \frac 4 p \mp 4\eps \tilde \delta,-1 \pm (1-\eps) \tilde\delta} + \tau \verti{v^{(2)}}_{\tilde k + 4 - \frac 4 p \pm 4\eps \tilde \delta,-1 \pm (1+\eps)  \tilde\delta}\big)\Big)^p \, \tfrac{\d\tau}{\tau}.
\end{align*}
Now using \eqref{u-inv-v-est} of Lemma~\ref{lem-invert-A}~\eqref{invert-A-1} and defining $u^{(j)}$ and $u^{(j)}_0$ through \eqref{def-u-u0-ub-invert-interp} with $v$ replaced by $v^{(j)}$, we infer for $\eps > 0$ sufficiently small
\begin{align*}
\verti{v}_{\tilde k + 4 - \frac 4 p, -1+\tilde\delta,p}^p &\gtrsim_{\tilde k,\tilde\delta,p} \int_0^\infty \tau^{-p+1} \Big(\inf_{u = u^{(1)} + u^{(2)}} \big(\verti{u^{(1)}-u^{(1)}_0}_{\tilde k + 8 - \frac 4 p - 4\eps \tilde \delta,(1-\eps) \tilde\delta} \\
&\phantom{\gtrsim_{\tilde k,\tilde\delta,p} \int_0^\infty \tau^{-p+1} \Big(\inf_{u = u^{(1)} + u^{(2)}} \big(} + \tau \verti{u^{(2)}-u^{(2)}_0}_{\tilde k + 8 - \frac 4 p + 4\eps \tilde \delta, (1+\eps) \tilde\delta}\big)\Big)^p \, \tfrac{\d\tau}{\tau} \\
&= \int_0^\infty \tau^{-p+1} \Big(\inf_{u - u_0 = w^{(1)} + w^{(2)}} \big(\verti{w^{(1)}}_{\tilde k + 8 - \frac 4 p - 4\eps \tilde \delta, (1-\eps) \tilde\delta} + \tau \verti{w^{(2)}}_{\tilde k + 8 - \frac 4 p + 4\eps \tilde \delta, (1+\eps) \tilde\delta}\big)\Big)^p \, \tfrac{\d\tau}{\tau}
\end{align*}
and
\begin{align*}
\verti{v}_{\tilde k + 4 - \frac 4 p, -1-\tilde\delta,p}^p \gtrsim_{\tilde k,\tilde\delta,p} \int_0^\infty \tau^{-p+1} \Big(\inf_{u = u^{(1)} + u^{(2)}} \big(&\verti{u^{(1)}}_{\tilde k + 8 - \frac 4 p + 4\eps \tilde \delta,- (1-\eps) \tilde\delta} + \tau \verti{u^{(2)}}_{\tilde k + 8 - \frac 4 p - 4\eps \tilde \delta,- (1+\eps) \tilde\delta}\big)\Big)^p \, \tfrac{\d\tau}{\tau}.
\end{align*}
Hence, \eqref{u-inv-v-est-interp} follows by replacing $\tilde k$ by $\tilde k + 4$ in \eqref{eq_interpol}.
\end{proof}
\begin{proposition}\label{prop_sol_Au=v}
Let $I = [0,T]$ with $T \in (0,\infty)$ or $I = [0,\infty)$. Assume that $1 < p < \infty$, $k, \tilde k \in \N_0$ with $\tilde k \ge k - 4 \beta + \frac 4 p$, and $\delta, \tilde\delta > 0$ such that $\frac 1 p < \beta$ and $\delta < \tilde\delta < \min\{-\gamma_2, \beta - \frac 1 p, \frac 1 p, 1 - \frac 1 p\}$. Further suppose that $\frac 1 p -1 \pm \tilde \delta$ and $\beta -1 \pm \delta$ are in the coercivity range of $\mathfrak p(D)$ (cf.~\eqref{coerc_range} of Lemma~\ref{lem_coercive_range} for a sufficient criterion). Lastly suppose that $u^{(0)} \colon (0,\infty) \to \R$, $u^{(0)}_0 \in \R$, and $f \colon (0,\infty)^2 \to \R$ are such that
\[
u^{(0)} \in H_{\tilde k+8-\frac 4 p,-\tilde\delta,p}, \qquad u^{(0)} - u^{(0)}_0 \in H_{\tilde k+8-\frac 4 p,\tilde\delta,p},
\]
and
\begin{align*}
f &\in L^p\big(I;H_{\tilde k+4,-1+\frac{1}{p}-\tilde\delta} \cap H_{\tilde k+4,-1+\frac{1}{p}+\tilde\delta}\big), \\
\big(t \mapsto t^{\beta - \frac 1 p} f(t)\big) &\in L^p\big(I;H_{k+4, -1+\beta -\delta} \cap H_{k+4,-1+\beta +\delta}\big),
\end{align*}
that is, $u^{(0)} \in U^{(0)}(\tilde k, \tilde\delta, p)$ and $f \in F(k,\tilde k, \delta, p,I)$. Then there exist $u \colon (0,\infty)^2 \to \R$ and $u_0, u_\beta \colon (0,\infty) \to \R$ locally integrable such that
\begin{align*}
u &\in BC^0\big(I;H_{\tilde k+8-\frac 4 p,-\tilde\delta,p}\big), \\
u-u_0 &\in BC^0\big(I;H_{\tilde k+8-\frac 4 p,\tilde\delta,p}\big) \cap L^p\big(I;H_{\tilde k+8,\frac{1}{p}-\tilde\delta} \cap H_{\tilde k+8,\frac{1}{p}+\tilde\delta}\big), \\
\big(t \mapsto t^{\beta - \frac 1 p} (u(t)-u_0(t))\big) &\in BC^0\big(I;H_{k + 8 - \frac 4 p,\beta - \frac 1 p - \delta,p} \cap H_{k + 8 - \frac 4 p,\beta - \frac 1 p + \delta,p}\big), \\
\partial_t u &\in L^p\big(I;H_{\tilde k+4,-1 +\frac{1}{p} -\tilde\delta} \cap H_{\tilde k+4,-1+\frac{1}{p}+\tilde\delta}\big), \\
\big(t \mapsto t^{\beta-\frac 1 p} \partial_t u(t)\big) &\in L^p\big(I;H_{k+4,-1 +\beta-\delta} \cap H_{k+4,-1+\beta+\delta}\big), \\
\big(t \mapsto t^{\beta-\frac 1 p} (u(t)-u_0(t))\big) &\in L^p(I;H_{k+8,\beta-\delta}), \\
\big(t \mapsto t^{\beta-\frac 1 p} (u(t)-u_0(t)-u_\beta(t) x^\beta)\big) &\in L^p(I;H_{k+8,\beta+\delta}),
\end{align*}
that is, $u \in U(k,\tilde k, \delta, p,I)$ and
\begin{align}
    \vertiii{u}_{I}^p := &\sup_{t \in I} \Big[\verti{u}^p_{\tilde k+8-\frac 4 p,-\tilde\delta,p}+\verti{u-u_0}^p_{\tilde k+8-\frac 4 p,\tilde\delta,p}+t^{p\beta-1} \verti{u-u_0}^p_{k + 8 - \frac 4 p,\beta - \frac 1 p - \delta,p} \nonumber \\
    &+t^{p\beta-1} \verti{u-u_0}^p_{k + 8 - \frac 4 p, \beta - \frac 1 p + \delta,p}\Big] + \int_I \Big[\verti{\partial_t u}^p_{\tilde k+4,-1+\frac{1}{p}-\tilde\delta}+\verti{\partial_t u}^p_{\tilde k+4,-1+\frac{1}{p}+\tilde\delta} \nonumber \\
    &+ t^{p\beta-1} \verti{\partial_t u}^p_{k+4,-1+\beta-\delta}+t^{p\beta-1} \verti{\partial_t u}^p_{k+4,-1+\beta+\delta}+ \verti{u-u_0}^p_{\tilde k+8,\frac{1}{p}-\tilde\delta} \nonumber \\
    &+ \verti{u-u_0}^p_{\tilde k+8,\frac{1}{p}+\tilde\delta}
    + t^{p\beta-1} \verti{u-u_0}^p_{k+8,\beta-\delta} + t^{p\beta-1} \verti{u-u_0 -u_{\beta}x^{\beta}}^p_{k+8,\beta+\delta} \Big] \d t < \infty,\label{sol_norm_i}
\end{align}
and such that \eqref{eq_full_cauchy} is classically satisfied. Furthermore, we have the a-priori estimate
\begin{equation}\label{a-priori-perturbed}
\vertiii{u}_I \lesssim_{k, \tilde k, \delta, \tilde\delta, p} \vertiii{u^{(0)}}_0 + \vertiii{f}_{1,I},
\end{equation}
where
\begin{align}
    \vertiii{u^{(0)}}_0^p \ \ &\stackrel{\mathclap{\eqref{eq_def_norm_u^0}}}{=} \ \ \verti{u^{(0)}}^p_{\tilde k+8-\frac 4 p,-\tilde\delta,p}+\verti{u^{(0)}-u_0^{(0)}}^p_{\tilde k+8-\frac 4 p,\tilde\delta,p}, \nonumber \\
    \vertiii{f}^p_{1,I} &:= \int_I \Big[\verti{f}^p_{\tilde k+4,-1+\frac{1}{p}-\tilde\delta}+\verti{f}^p_{\tilde k+4,-1+\frac{1}{p}+\tilde\delta}+t^{p\beta-1}\verti{f}^p_{k+4, -1+\beta -\delta}+t^{p\beta-1}\verti{f}^p_{k+4,-1+\beta +\delta}\Big] \d t, \label{f_norm_i}
\end{align}
and $C < \infty$ only depends on $k$, $\tilde k$, $\delta$, $\tilde\delta$, and $p$.
\end{proposition}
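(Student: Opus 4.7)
The plan is to use elliptic regularity to differentiate the Cauchy problem: setting $v:=Au$, $g:=Af$, and $v^{(0)}:=Au^{(0)}$, the function $v$ formally satisfies the Cauchy problem $\partial_t v - Av = g$ with $v(0)=v^{(0)}$, to which the maximal $L^p$-regularity of Proposition~\ref{proposition_estimate_almostMR} applies directly. Once $v$ is constructed, the solution $u$ of \eqref{eq_full_cauchy} is recovered by inverting $A$ pointwise in $t$ via the explicit quadruple-integral operator $B$ of Lemma~\ref{lem-invert-A}, and the boundary traces $u_0(t),u_\beta(t)$ are read off from the formulas \eqref{def-u0-invert}, \eqref{def-ub-invert}. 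The elliptic-regularity estimates \eqref{u-inv-v-est}--\eqref{u-inv-v-3} convert $L^p_t$-control on $v$ in $H_{\ell,\alpha-1}$ into $L^p_t$-control on $u-u_0$ or $u-u_0-u_\beta x^\beta$ in $H_{\ell+4,\alpha}$.

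Concretely, I first apply the interpolation version Lemma~\ref{lem-invert-A-interp} to obtain the norm equivalence
\[
\verti{v^{(0)}}_{\tilde k+4-\frac 4 p,-1\pm\tilde\delta,p} \sim_{\tilde k,\tilde\delta,p} \verti{u^{(0)}-u_0^{(0)}}_{\tilde k+8-\frac 4 p,\pm\tilde\delta,p},
\]
so that $v^{(0)}$ lies in $H_{\tilde k+4-\frac 4 p,-1-\tilde\delta,p}\cap H_{\tilde k+4-\frac 4 p,-1+\tilde\delta,p}$ with norm controlled by $\vertiii{u^{(0)}}_0$. Then I apply Proposition~\ref{proposition_estimate_almostMR} to $\partial_t v - Av = g$ with the four choices
\[
(k_m,\alpha_m,\mu_m)\in\{(\tilde k,\tfrac 1 p-\tilde\delta,0),\,(\tilde k,\tfrac 1 p+\tilde\delta,0),\,(k,\beta-\delta,p\beta-1),\,(k,\beta+\delta,p\beta-1)\};
\]
the hypotheses on $p$, $\delta$, and $\tilde\delta$ guarantee that each $\alpha_m$ lies in the coercivity range \eqref{coerc_range}, and since $\mu_1=\mu_2=0$ the proposition is admissible. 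This simultaneously delivers $L^p_t$-bounds on $v$ and $\partial_t v$ at all four weights, controlled by $\vertiii{u^{(0)}}_0+\vertiii{f}_{1,I}$.

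Given such $v$, I define $u:=Bv$. By Lemma~\ref{lem-invert-A}, $A(Bv)=v$ and $BAu^{(0)}=u^{(0)}$ via \eqref{left-inv-a}, so $u(0)=u^{(0)}$; since $B$ commutes with $\partial_t$, one computes $\partial_t u = B\partial_t v = B(Af + Av) = f + v = f + Au$, whence $u$ solves \eqref{eq_full_cauchy} classically. Applying \eqref{u-inv-v-est}--\eqref{u-inv-v-3} at $(\ell,\alpha)=(\tilde k+4,\tfrac 1 p\pm\tilde\delta)$ and, multiplied by the weight $t^{p\beta-1}$, at $(\ell,\alpha)=(k+4,\beta\pm\delta)$ then gives all spatial $L^p_t$-integrals in \eqref{sol_norm_i}. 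For the $\sup_{t\in I}$-terms in interpolation norms, I combine Lemma~\ref{lem-invert-A-interp} with the trace estimate $v\in BC^0(I;(H_{\tilde k-2,\alpha-1},D(A))_{1-1/p,p})$, which follows from the maximal regularity for $v$, and for the $t^{p\beta-1}$-weighted supremum I use the anisotropic interpolation inequality of Lemma~\ref{lem-interp-aniso} applied to $\partial_t(u-u_0)$ and $\partial_x(u-u_0)$.

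The main technical hurdle will be the nonzero time weight $\mu=p\beta-1$: Proposition~\ref{proposition_estimate_almostMR} yields a remainder $\mu\int_I t^{\mu-1}\verti{v}^p\,\d t$ on the right-hand side of \eqref{eq_second_MR_estimate} that must be absorbed. I would absorb it by interpolating in space between the $\mu=0$ estimate at weight $\tfrac 1 p+\tilde\delta$ (already controlled by $\vertiii{u^{(0)}}_0+\vertiii{f}_{1,I}$) and the desired $\mu=p\beta-1$ estimate at $\beta\pm\delta$, exploiting the strict ordering $\delta<\tilde\delta<\min\{\beta-\tfrac 1 p,\tfrac 1 p,1-\tfrac 1 p\}$ together with the integer gap $\tilde k>\max\{k+4-\tfrac 4 p,k+\tfrac 1 2\}$. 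Consistency of $u_0(t)$ and $u_\beta(t)$ across the four $(k_m,\alpha_m)$-choices is provided by the uniqueness of the singular expansion at $x=0$ in Lemma~\ref{lem-invert-A}, and uniqueness of $u$ is inherited from the uniqueness clause of Proposition~\ref{proposition_estimate_almostMR}.
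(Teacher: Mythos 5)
Your overall architecture coincides with the paper's: set $v=Au$, $g=Af$, $v^{(0)}=Au^{(0)}$, solve the $v$-problem with the time-weighted maximal regularity of Proposition~\ref{proposition_estimate_almostMR} at the four weight choices, and recover $u$, $u_0$, $u_\beta$ by the explicit inverse $B$ together with the elliptic-regularity estimates of Lemmata~\ref{lem-invert-A} and~\ref{lem-invert-A-interp}. However, there is a genuine gap at the step you yourself identify as the main hurdle: the remainder $\mu\int_I t^{\mu-1}\verti{v}^p\,\d t$ with $\mu=p\beta-1$ coming from \eqref{eq_second_MR_estimate}. You propose to absorb it by \emph{spatial} interpolation between the $\mu=0$ estimate at weight $\tfrac 1p+\tilde\delta$ and the $\mu=p\beta-1$ estimate at $\beta\pm\delta$. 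This cannot work: the remainder carries the time weight $t^{p\beta-2}$, which is not a convex combination of the available weights $t^0$ and $t^{p\beta-1}$ (for $p\beta<2$ it is more singular at $t=0$ than either), and no interpolation purely in the spatial weight can manufacture the missing power of $t$. Moreover, interpolating against "the desired $\mu=p\beta-1$ estimate" is circular unless the resulting term can be absorbed with a small constant, which you do not arrange.

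The paper's actual mechanism is different in kind: after a reiteration argument to replace the interpolation norm by fractional Sobolev norms, it applies the anisotropic space--time inequality of Lemma~\ref{lem-interp-aniso} \emph{to $v$ itself} (not to $u-u_0$, and not for the supremum terms), writing $t^{p\beta-2}x^{-2-2\beta+\frac 4p}v^2$ as a product of powers of $t^{-1}\verti{v}$ and $x^{-1}\verti{v}$ and invoking Young's inequality followed by one-dimensional Hardy inequalities in $t$ and in $x$. This converts the dangerous remainder into $\int_I\big[\verti{\partial_t v}^p_{\tilde k,-2\pm\tilde\delta+\frac 1p}+\verti{v}^p_{\tilde k+4,-1+\frac 1p\pm\tilde\delta}\big]\d t$, which is exactly what the unweighted ($\mu=0$) maximal-regularity estimate already controls; the hypotheses $\tilde k>k+4-\tfrac 4p$ and $\tilde k>k+\tfrac 12$ enter precisely here to compensate the loss of derivatives and the change of spatial weight. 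You cite Lemma~\ref{lem-interp-aniso}, but you allocate it to the weighted supremum term (which in the paper instead follows from the left-hand side of the weighted estimate \eqref{est-v-weighted} directly), and you replace its essential role by an argument that fails. To repair the proof, redirect the anisotropic inequality to the remainder term as above; the rest of your outline then goes through as in the paper.
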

\begin{figure}[htb]
     \centering
     \begin{subfigure}[b]{0.45\textwidth}
         \centering
         \includegraphics[width=1.1\textwidth]{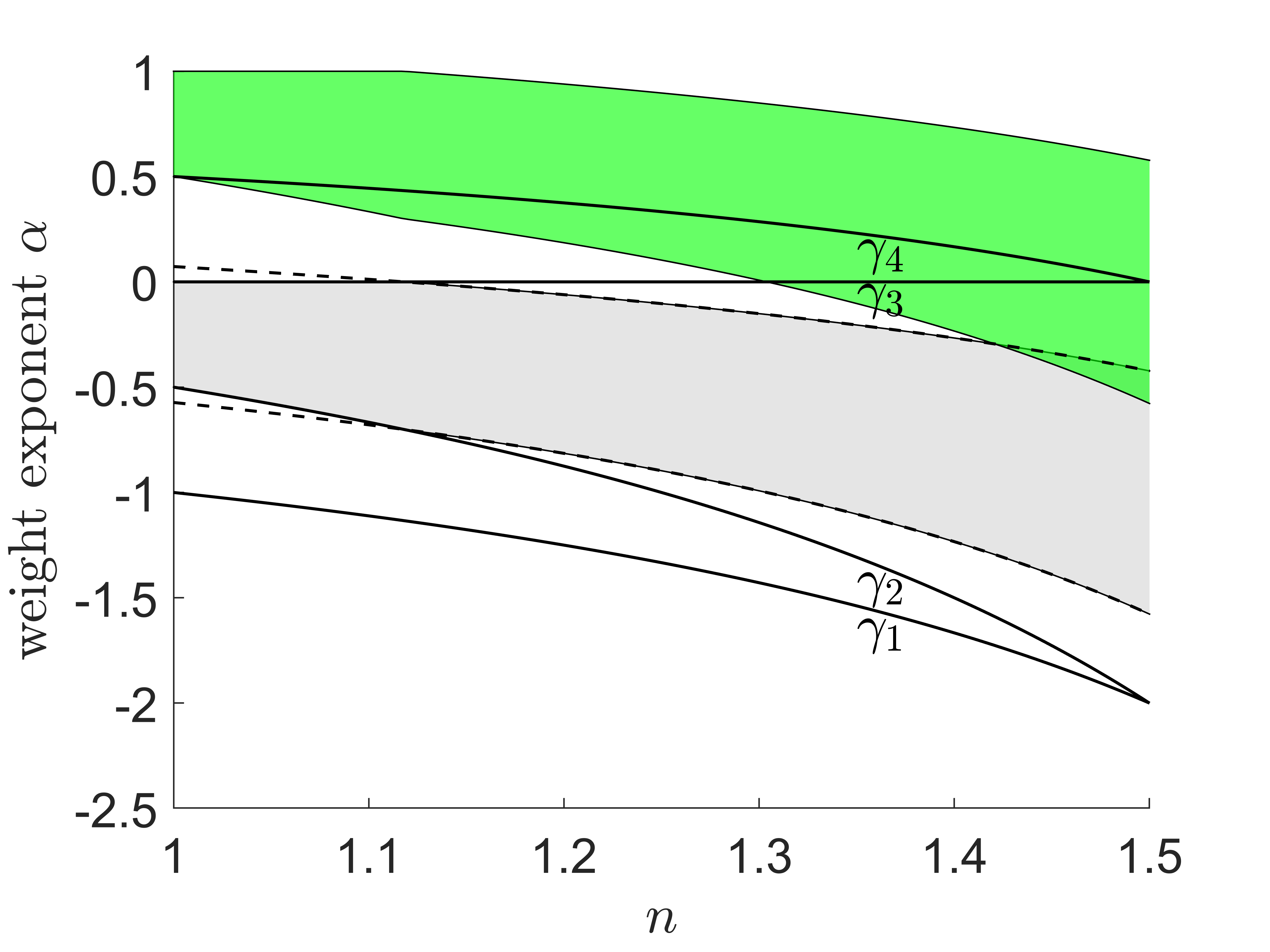}
         \caption{$n\in (1,\tfrac{3}{2})$}
         \label{fig:coercivity_range_a_shift+1}
     \end{subfigure}
     \hfill
     \begin{subfigure}[b]{0.45\textwidth}
         \centering
         \includegraphics[width=1.1\textwidth]{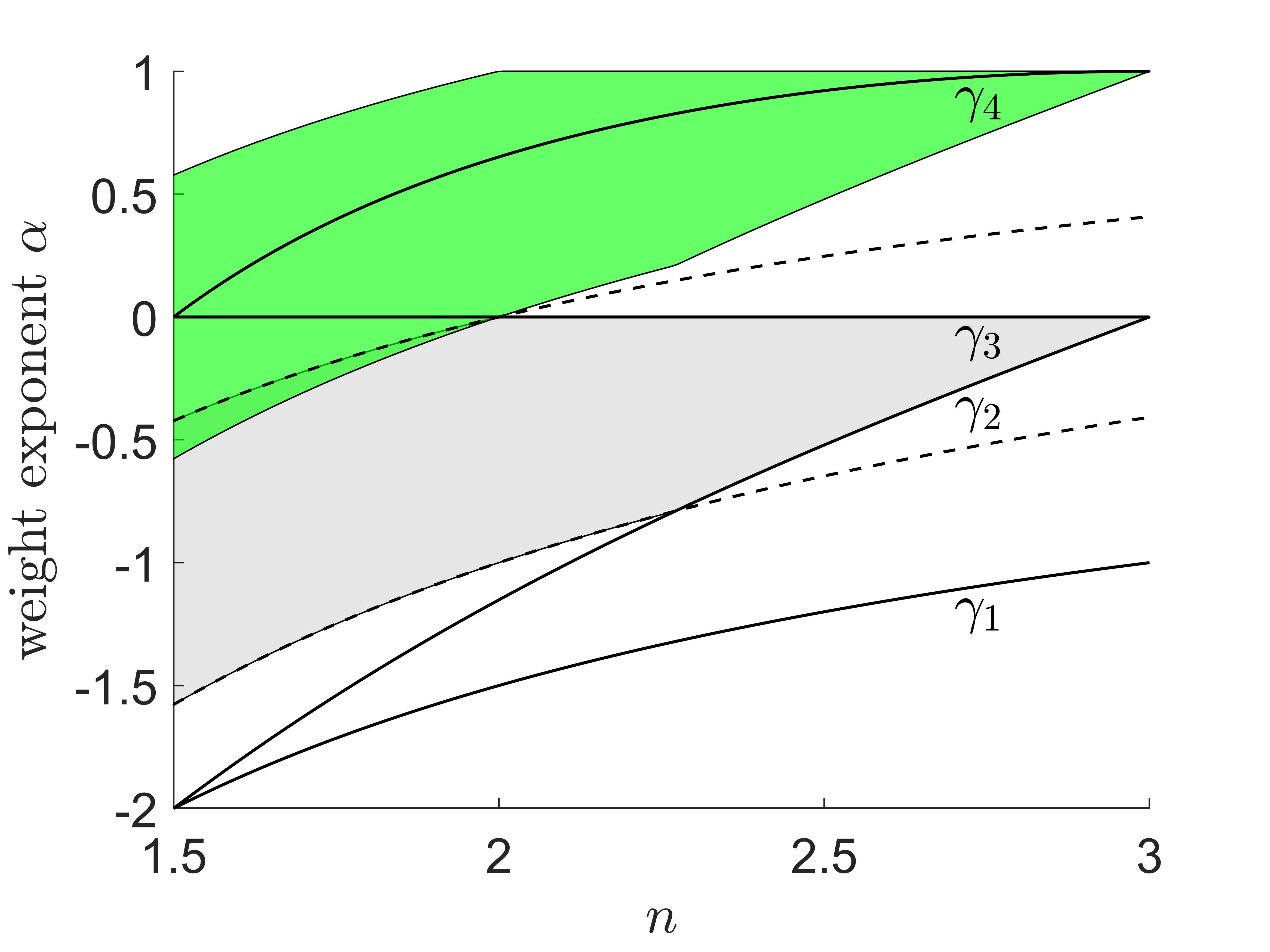}
         \caption{$n\in(\tfrac{3}{2},3)$}
         \label{fig:coercivity_range_b_shift+1}
     \end{subfigure}
        \caption{For the two different cases of $n$ the zeros $\gamma_1,\dots,\gamma_4$ of $\mathfrak p(D)$ (solid line) and the upper and lower bound in \eqref{eq_coer_condition2_n_1_32} and \eqref{eq_coer_condition2_n_32_3} (dashed line) are shown. The coercivity range for $\alp$ contains the shaded area according to \eqref{coerc_range} of Lemma~\ref{lem_coercive_range}. This area shifted up by 1 is shown in green (shaded darker, green color online).}
        \label{fig:coercivity_range_shift}
\end{figure}
\begin{proof}
Define $v^{(0)} := A u^{(0)} = A (u^{(0)} - u^{(0)}_0)$ (cf.~\eqref{eq_def_p(D)}, \eqref{eq_zeros_12}, \eqref{a-op-pd}), then we have $v^{(0)} \in H_{\tilde k + 4 - \frac 4 p,-1-\tilde\delta,p} \cap H_{\tilde k + 4 - \frac 4 p,-1+\tilde\delta,p}$. By approximation, we may assume that $v^{(0)} \in C^\infty_\mathrm{c}((0,\infty))$. Furthermore, define $g := A f$ so that
\begin{align*}
g &\in L^p\big(0,\infty;H_{\tilde k,-2+\frac{1}{p}-\tilde\delta} \cap H_{\tilde k,-2+\frac{1}{p}+\tilde\delta}\big), \\
\big(t \mapsto t^{\beta - \frac 1 p} g(t)\big) &\in L^p\big(0,\infty;H_{k,-2+\beta -\delta} \cap H_{k,-2+\beta +\delta}\big).
\end{align*}
By Proposition~\ref{proposition_estimate_almostMR} we have a unique classical solution $v = v^{(1)} + v^{(2)} \colon (0,\infty)^2 \to \R$ of the initial-value problem \eqref{eq_full_cauchy_v} (where $v^{(1)}$ solves \eqref{eq_full_cauchy_v} with $g = 0$ and $v^{(2)}$ solves \eqref{eq_full_cauchy_v} with $v^{(0)} = 0$) with regularity
\begin{align*}
v^{(\ell)} &\in BC^0\big([0,\infty);H_{\tilde k+4-\frac 4 p,-1-\tilde\delta,p} \cap H_{\tilde k+4-\frac 4 p,-1+\tilde\delta,p}\big), \\
\partial_t v^{(\ell)} &\in L^p\big(0,\infty;H_{\tilde k,-2+\frac{1}{p}-\tilde\delta} \cap H_{\tilde k,-2+\frac{1}{p}+\tilde\delta}\big), \\
v^{(\ell)} &\in L^p\big(0,\infty;H_{\tilde k+4,-1+\frac{1}{p}-\tilde\delta} \cap H_{\tilde k+4,-1+\frac{1}{p}+\tilde\delta}\big),
\end{align*}
and
\begin{align*}
\big(t \mapsto t^{\beta - \frac 1 p} v^{(\ell)}(t)\big) &\in BC^0\big(I;H_{k + 4 - \frac 4 p,-1+\beta - \frac 1 p - \delta,p} \cap H_{k + 4 - \frac 4 p,-1+\beta - \frac 1 p + \delta,p}\big), \\
\big(t \mapsto t^{\beta-\frac 1 p} \partial_t v^{(\ell)}(t)\big) &\in L^p\big(I;H_{k,-2+\beta-\delta} \cap H_{k,-2+\beta+\delta}\big), \\
\big(t \mapsto t^{\beta-\frac 1 p} v^{(\ell)}(t)\big) &\in L^p(I;H_{k+4,-1+\beta-\delta} \cap H_{k+4,-1+\beta+\delta})
\end{align*}
if $\verti{I} < \infty$, which by \eqref{eq_second_MR_estimate} satisfies the maximal-regularity estimates
\begin{subequations}\label{est-v-unw-w}
\begin{align}
&\sup_{t \in I} \sum_{\ell = 1}^2 \Big[\verti{v^{(\ell)}}^p_{\tilde k+4-\frac 4 p,-1-\tilde\delta,p}+\verti{v^{(\ell)}}^p_{\tilde k+4-\frac 4 p,-1+\tilde\delta,p}\Big] \nonumber \\
&+ \int_I \sum_{\ell = 1}^2 \Big[\verti{\partial_t v^{(\ell)}}^p_{\tilde k,-2+\frac{1}{p}-\tilde\delta}+\verti{\partial_t v^{(\ell)}}^p_{\tilde k,-2+\frac{1}{p}+\tilde\delta} + \verti{v^{(\ell)}}^p_{\tilde k+4,-1+\frac{1}{p}-\tilde\delta} + \verti{v^{(\ell)}}^p_{\tilde k+4,-1+\frac{1}{p}+\tilde\delta}\Big] \d t \nonumber \\
& \quad \lesssim_{\tilde k,\tilde \delta, p} \verti{v^{(0)}}^p_{\tilde k+4-\frac 4 p,-1-\tilde\delta,p}+\verti{v^{(0)}}^p_{\tilde k+4-\frac 4 p,-1+\tilde\delta,p} + \int_I \Big[\verti{g}^p_{\tilde k,-2+\frac{1}{p}-\tilde\delta}+ \verti{g}^p_{\tilde k,-2+\frac{1}{p}+\tilde\delta}\Big] \d t, \label{est-v-unweighted}
\end{align}
and
\begin{align}
&\sup_{t \in I} t^{p\beta-1} \sum_{\ell = 1}^2 \Big[\verti{v^{(\ell)}}^p_{k + 4 - \frac 4 p,-1+\beta - \frac 1 p - \delta,p} + \verti{v^{(\ell)}}^p_{k + 4 - \frac 4 p, -1+\beta - \frac 1 p + \delta,p}\Big] \nonumber \\
&+ \int_I t^{p\beta-1} \sum_{\ell = 1}^2 \big[\verti{\partial_t v^{(\ell)}}^p_{k,-2+\beta-\delta}+\verti{\partial_t v^{(\ell)}}^p_{k,-2+\beta+\delta} + \verti{v^{(\ell)}}^p_{k+4,-1+\beta-\delta} + \verti{v^{(\ell)}}^p_{k+4,-1+\beta+\delta} \big] \d t \nonumber \\
& \quad \lesssim_{k,\delta, p} \int_I t^{p\beta-1}\big[\verti{g}^p_{k,-2+\beta -\delta}+\verti{g}^p_{k,-2+\beta +\delta}\big] \d t \nonumber \\
&\quad \phantom{\lesssim_{k,\delta, p}} + (p \beta-1) \int_I t^{p\beta-2} \sum_{\ell = 1}^2 \Big[\verti{v^{(\ell)}}^p_{k + 4 - \frac 4 p, - 1 + \beta - \frac 1 p - \delta,p} + \verti{v^{(\ell)}}^p_{k + 4 - \frac 4 p, - 1 + \beta - \frac 1 p + \delta,p}\Big] \d t \label{est-v-weighted}
\end{align}
\end{subequations}
for $\verti{I} < \infty$. By \eqref{def_k,alpha,p_norm}, \eqref{sobolev-frac}, \eqref{eq_char_interpol} of Lemma~\ref{lemma_char_interpol}, and the reiteration theorem \cite[Theorem~3.5.3]{Bergh_Loefstroem} we have (with equivalence of norms)
\begin{align*}
H_{k+4-\frac 4 p,-1+\beta-\frac 1 p \pm \delta,p} &= \Big(\big(H_{k,-2+\beta\pm\delta}, H_{k+4,-1+\beta\pm\delta}\big)_{1-\frac{p\beta-1}{p},2}, H_{k+4,-1+\beta\pm\delta}\Big)_{\frac{p\beta-2}{p\beta-1}, p} \\
&= \Big(H_{k+4-4\beta + \frac 4 p,-1+\frac 1 p \pm \delta}, H_{k+4,-1+\beta \pm \delta}\Big)_{\frac{p\beta-2}{p\beta-1}, p}.
\end{align*}
This entails by interpolation (cf.~\cite[Theorem~3.1.2]{Bergh_Loefstroem}) and using $\tilde k \ge k - 4 \beta + \frac 4 p$,
\begin{align*}
& \int_I t^{p\beta-2} \Big[\verti{v^{(\ell)}}^p_{k + 4 - \frac 4 p, - 1 + \beta - \frac 1 p - \delta,p} + \verti{v^{(\ell)}}^p_{k + 4 - \frac 4 p, - 1 + \beta - \frac 1 p + \delta,p}\Big] \d t \\
& \quad \lesssim_{\beta,p} \int_I \big[\verti{v^{(\ell)}}_{\tilde k + 4, - 1 + \frac 1 p - \delta}^p + \verti{v^{(\ell)}}_{k + 4, - 1 + \frac 1 p + \delta}^p\big]^{\frac{1}{p \beta - 1}} \\
& \phantom{ \quad \lesssim_{\beta,p} \int_I} \times \big[t^{p\beta-1} \verti{v^{(\ell)}}_{\tilde k + 4, - 1 + \frac 1 p - \delta}^p + t^{p\beta-1} \verti{v^{(\ell)}}_{k + 4, - 1 + \frac 1 p + \delta}^p\big]^{\frac{p \beta - 2}{p \beta - 1}} \d t \\
& \quad \le \Big(\int_I \big[\verti{v^{(\ell)}}_{\tilde k + 4, - 1 + \frac 1 p - \delta}^p + \verti{v^{(\ell)}}_{k + 4, - 1 + \frac 1 p + \delta}^p\big] \, \d t\Big)^{\frac{1}{p \beta - 1}} \\
& \quad \phantom{\le} \times \Big(\int_I \big[t^{p\beta-1} \verti{v^{(\ell)}}_{\tilde k + 4, - 1 + \frac 1 p - \delta}^p + t^{p\beta-1} \verti{v^{(\ell)}}_{k + 4, - 1 + \frac 1 p + \delta}^p\big] \d t\Big)^{\frac{p \beta - 2}{p \beta - 1}},
\end{align*}
where H\"older's inequality was used in the second step. Hence, after applying Young's inequality, estimates~\eqref{est-v-unw-w} can be combined to
\begin{align}
&\sup_{t \in I} \Big[\verti{v}^p_{\tilde k+4-\frac 4 p,-1-\tilde\delta,p}+\verti{v}^p_{\tilde k+4-\frac 4 p,-1+\tilde\delta,p}+t^{p\beta-1} \verti{v}^p_{k + 4 - \frac 4 p,-1+\beta - \frac 1 p - \delta,p}+t^{p\beta-1} \verti{v}^p_{k + 4 - \frac 4 p, -1+\beta - \frac 1 p + \delta,p}\Big] \nonumber \\
&+ \int_I \Big[\verti{\partial_t v}^p_{\tilde k,-2+\frac{1}{p}-\tilde\delta}+\verti{\partial_t v}^p_{\tilde k,-2+\frac{1}{p}+\tilde\delta} + t^{p\beta-1} \verti{\partial_t v}^p_{k,-2+\beta-\delta} + t^{p\beta-1} \verti{\partial_t v}^p_{k,-2+\beta+\delta}\Big] \d t \nonumber \\
&+ \int_I \Big[\verti{v}^p_{\tilde k+4,-1+\frac{1}{p}-\tilde\delta} + \verti{v}^p_{\tilde k+4,-1+\frac{1}{p}+\tilde\delta}+t^{p\beta-1}\verti{v}^p_{k+4,-1+\beta-\delta} +t^{p\beta-1}\verti{v}^p_{k+4,-1+\beta+\delta} \Big] \d t \nonumber \\
& \quad \lesssim_{k,\tilde k, \delta, \tilde \delta, p} \verti{v^{(0)}}^p_{\tilde k+4-\frac 4 p,-1-\tilde\delta,p}+\verti{v^{(0)}}^p_{\tilde k+4-\frac 4 p,-1+\tilde\delta,p} \nonumber \\
& \quad \phantom{\lesssim_{k,\tilde k, \delta, \tilde \delta, p}} + \int_I \Big[\verti{g}^p_{\tilde k,-2+\frac{1}{p}-\tilde\delta}+\verti{g}^p_{\tilde k,-2+\frac{1}{p}+\tilde\delta}+t^{p\beta-1}\verti{g}^p_{k,-2+\beta -\delta}+t^{p\beta-1}\verti{g}^p_{k,-2+\beta +\delta}\Big] \d t.
\label{est-v-combined}
\end{align}
Trivially, the right-hand side of \eqref{est-v-combined} is bounded (up to a constant) by the right-hand side of \eqref{a-priori-perturbed}.
Define $u$ and $u_0$ through \eqref{def-u-u0-ub-invert}, and $u_\beta$ through \eqref{def-ub-invert} of Lemma~\ref{lem-invert-A}. Then by the elliptic regularity estimates~\eqref{u-inv-v-est} and \eqref{u-inv-v-3} of Lemma~\ref{lem-invert-A}, and estimates~\eqref{u-inv-v-est-interp} of Lemma~\ref{lem-invert-A-interp}, we infer from \eqref{est-v-weighted} that indeed \eqref{a-priori-perturbed} is satisfied for any $\verti{I} < \infty$, so that also $I = [0,\infty)$ holds true. Using \eqref{eq-au-v} and \eqref{left-inv-a} in \eqref{eq_full_cauchy_v}, we infer that \eqref{eq_full_cauchy} is classically satisfied.
\end{proof}
%

\section{The Nonlinear Problem}\label{sec-nonlinear}
In this section, the nonlinear problem \eqref{eq_nonlin_cauchy} will be treated. Subsequently, the main result of \S\ref{sec_main_result} will be proven in \S\ref{sec_proof_main_result}. Before this can be done, we need to show a few preliminary estimates.

\subsection{Nonlinear Estimates\label{ssec:non-est}}

%
\begin{lemma}\label{lem_coeff_est}
In the situation of Proposition~\ref{prop_sol_Au=v}, the coefficients $u_0$ and $u_{\beta}$, and the solution $u$ satisfy
\[
u_0\in BC^0(I), \ \ \big(t \mapsto t^{\beta-\frac 1 p} u_{\beta}(t)\big) \in L^p(I), \ \ u \in BC^0(I;U^{(0)}(\tilde k, \tilde \delta, p)), \ \ u \in BC^m(I \times [0,\infty)),
\]
where $m \in \N_0$ with $m < \tilde k + \frac{15}{2} - \frac 4 p$. In particular, we have $u_0(t) \to u_0^{(0)}$ as $t \searrow 0$. Furthermore, it holds $u_0(t) \to 0$, $\vertiii{u(t)}_0 \to 0$, and $\max_{j = 0,\ldots,m} \sup_{x > 0} \verti{D^j u(t,x)} \to 0$ as $t \to \infty$ if $I = [0,\infty)$. Additionally, the following inequality holds true
\begin{equation}\label{eq_ineq_coeff}
    \sup_{t \in I} \, \verti{u_0}^p + \int_I t^{p\beta-1} \verti{u_{\beta}}^p \, \d t + \vertii{u}_{m,I} \lesssim_{\tilde k, \delta, p} \vertiii{u}_I^p,
\end{equation}
where
\[
\vertii{u}_{m,I} := \max_{j = 0,\ldots,m} \sup_{(t,x) \in I \times (0,\infty)} \verti{D^j u(t,x)} \qquad \text{and} \qquad \vertii{\cdot}_m := \vertii{\cdot}_{m,[0,\infty)}.
\]
\end{lemma}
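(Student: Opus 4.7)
The plan is to convert the weighted Sobolev controls in $\vertiii{u}_I$ into pointwise-in-$x$ bounds via the embedding \eqref{eq_lower_bound_interpol} of Lemma~\ref{lemma_char_interpol} (and, for the integer-order $H_{k,\alpha}$-terms, via the isomorphism with $H^k(\R)$ combined with the standard Sobolev embedding), and then to extract $u_0(t)$ and $u_\beta(t)$ by evaluating the resulting bounds at $x = 1$. Concretely, \eqref{eq_lower_bound_interpol} applied to $u \in H_{\tilde k + 8 - \frac 4 p, -\tilde\delta, p}$ and to $u - u_0 \in H_{\tilde k + 8 - \frac 4 p, \tilde\delta, p}$ yields
\[ |D^j u(t,x)| \lesssim_{\tilde k, \tilde\delta, p} x^{-\tilde\delta}\verti{u(t)}_{\tilde k + 8 - \frac 4 p, -\tilde\delta, p}, \qquad |D^j(u(t,x) - \delta_{j,0}\,u_0(t))| \lesssim_{\tilde k, \tilde\delta, p} x^{\tilde\delta}\verti{u(t)-u_0(t)}_{\tilde k + 8 - \frac 4 p, \tilde\delta, p} \]
for $j = 0,\dots,m$ with $m < \tilde k + \tfrac{15}{2} - \tfrac 4 p$. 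Taking the minimum of the two bounds over $x > 0$ gives $\vertii{u}_{m,I} \lesssim \vertiii{u}_I$ and, from the second bound as $x \searrow 0$, the continuous extension $u(t,0) = u_0(t)$, yielding the $BC^m$-regularity claim.

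Evaluating these estimates at $x = 1$ together with the triangle inequality $|u_0(t)| \le |u(t,1)| + |u(t,1) - u_0(t)|$ immediately produces $\sup_{t \in I}|u_0(t)|^p \lesssim \vertiii{u}_I^p$. For $u_\beta$, I would use the decomposition $u_\beta(t) = (u(t,1) - u_0(t)) - (u(t,1) - u_0(t) - u_\beta(t))$, controlling the first difference by the Sobolev embedding of the Hilbert space $H_{k+8,\beta-\delta}$ and the remainder by that of $H_{k+8,\beta+\delta}$, both evaluated at $x = 1$; multiplying by $t^{p\beta-1}$ and integrating in $t$ then bounds the $u_\beta$-part of \eqref{eq_ineq_coeff} by $\vertiii{u}_I^p$. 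The continuity $u \in BC^0(I;U^{(0)}(\tilde k,\tilde\delta,p))$ and the initial trace $u_0(t) \to u_0^{(0)}$ as $t \searrow 0$ then follow from the $BC^0$-in-time controls already built into $\vertiii{u}_I$ combined with the estimate $|u_0(t_2)-u_0(t_1)|^p \lesssim \vertiii{u(t_2)-u(t_1)}_0^p$ applied to the difference.

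For the decay as $t \to \infty$, I would pass to the auxiliary variable $v := A u = A(u-u_0)$ (using $Au_0 = 0$), as in the proof of Proposition~\ref{prop_sol_Au=v}. By Lemma~\ref{lem_ineq_hardy}, the integrability of $\partial_t u$ and $u - u_0$ in the weighted spaces appearing in $\vertiii{u}_I$ translates into $v \in L^p(I;H_{\tilde k + 4, -1 + \frac 1 p \pm \tilde\delta})$ and $\partial_t v = A\,\partial_t u \in L^p(I;H_{\tilde k, -2 + \frac 1 p \pm \tilde\delta})$. The trace method \cite[Prop.~1.2.10]{Lunardi} together with the identification \eqref{def-trace-space} then gives $v \in BC^0(I;H_{\tilde k + 4 - \frac 4 p, -1 \pm \tilde\delta, p})$, and the $L^p$-in-time integrability forces $v(t) \to 0$ in this trace space as $t \to \infty$. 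Inversion via \eqref{u-inv-v-est-interp} of Lemma~\ref{lem-invert-A-interp} transfers the decay to $\vertiii{u(t)}_0 \to 0$, and the earlier steps then yield $u_0(t) \to 0$ and $\max_{j\le m}\sup_{x>0}|D^j u(t,x)| \to 0$.

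The main obstacle is the $t \to \infty$ decay: it must be routed through $v = Au$ rather than $u$ itself, because the constant function $1$ does not lie in the weighted spaces $H_{k,\alpha}$ for $\alpha \ne 0$, so $u$ cannot be directly decomposed as $u_0 \cdot 1 + (u - u_0)$ in the trace argument. Exploiting $Au_0 = 0$ sends this obstruction through the image of $A$, where the trace method applies cleanly, and inverting via $B$ brings the decay back to $u$, $u_0$, and all its derivatives.
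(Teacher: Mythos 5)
Your proposal is correct and follows essentially the same route as the paper: pointwise bounds from \eqref{eq_lower_bound_interpol} and Sobolev embedding of the weighted Hilbert norms (evaluated near $x=1$) for $u_0$, $u_\beta$, and the $BC^m$ bound, and routing the $t\to\infty$ decay through $v=Au$ via the trace-space identification of Lemma~\ref{lem-invert-A-interp}. The only real difference is the mechanism for the decay of $v$ in the trace space: the paper extends by zero, reflects evenly in time, and mollifies to approximate by compactly supported smooth functions, whereas you invoke the translation-invariant trace estimate on tails, $\lVert v(T)\rVert_{(X_0,X_1)_{1-1/p,p}}\lesssim \lVert \partial_t v\rVert_{L^p(T,\infty;X_0)}+\lVert v\rVert_{L^p(T,\infty;X_1)}\to 0$; both are standard and valid, and yours is arguably the more direct of the two.
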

\begin{proof}
Define $\eta(x) := x^2 \mathbbm{1}_{[0,1]}(x) + x^{-2} (1-\mathbbm{1}_{[0,1]}(x))$. Then it holds
\begin{align*}
    |u_0(t)|^p &= \int_1^2 |u_0(t)|^p \d x \lesssim_{\delta, p} \int_1^2 \eta^2 x^{-2\delta} |u(t,x)-u_0(t)|^p \tfrac{\d x}{x} + \int_1^2 \eta^2  x^{2\delta} |u(t,x)|^p \tfrac{\d x}{x} \\
    &\lesssim_{\tilde k, \delta, p} \verti{u(t)-u_0(t)}^p_{\tilde k + 8 - \frac 4 p,-\tilde\delta,p}+\verti{u(t)}^p_{\tilde k + 8 - \frac 4 p,\tilde\delta,p}
\end{align*}
by \eqref{lower_bound_l2eta} of Lemma~\ref{lemma_char_interpol}. Taking the supremum gives in combination with estimate~\eqref{eq_lower_bound_interpol} of Lemma~\ref{lemma_char_interpol}
\begin{align}
\sup_{t \in I} |u_0(t)|^p + \vertii{u}_{m,I}^p &\lesssim_{\tilde k, \delta, p} \sup_{t \in I} \vertiii{u(t)}_0^p = \sup_{t \in I} \big(\verti{u(t)-u_0(t)}^p_{\tilde k + 8 - \frac 4 p,-\tilde\delta,p}+\verti{u(t)}^p_{\tilde k + 8 - \frac 4 p,\tilde\delta,p}\big) \label{est-u0-u-inf} \\
&\stackrel{\mathclap{\eqref{sol_norm_i}}}{\le} \ \ \vertiii{u}_I^p \stackrel{\eqref{a-priori-perturbed}}{\lesssim_{k,\tilde k, \delta, \tilde\delta, p}} \vertiii{u^{(0)}}_0^p + \vertiii{f}_{1,I}^p < \infty. \nonumber
\end{align}
Using \eqref{u-inv-v-est-interp} of Lemma~\ref{lem-invert-A-interp} and $Au =: v$, we have
\[
\sup_{t \in I} \vertiii{u(t)}_0^p \lesssim_{\tilde k, \delta, p} \sup_{t \in I} \big(\verti{v(t)}^p_{\tilde k + 4 - \frac 4 p,-1-\tilde\delta,p}+\verti{v(t)}^p_{\tilde k + 4 - \frac 4 p,-1+\tilde\delta,p}\big).
\]
By extending $f$ by $0$, we can without loss of generality assume $I = [0,\infty)$. Using the trace method for real interpolation (see e.g.~\cite[Definition~1.2.8]{Lunardi}) then gives
\begin{align*}
&\sup_{t \ge 0} \vertiii{u(t)}_0^p \\
&\lesssim_{\tilde k, \delta, p} \int_0^{\infty} \big(\verti{\partial_t v(t)}^p_{\tilde k,-2-\tilde\delta+ \frac 1 p}+\verti{\partial_t v(t)}^p_{\tilde k,-2+\tilde\delta+ \frac 1 p}+\verti{v(t)}^p_{\tilde k+4,-1-\tilde\delta+ \frac 1 p}+\verti{v(t)}^p_{\tilde k+4,-1-\tilde\delta+ \frac 1 p}\big) \d t.
\end{align*}
Applying an even reflection in time (with reflected quantities $u^r(t)$, $u_0^\textrm{r}$, and $v^\textrm{r}$) yields
\begin{align*}
&\sup_{t \ge 0} \vertiii{u^r(t)}_0^p \\
& \quad \lesssim_{\tilde k, \delta, p} \int_\R \big(\verti{\partial_t v^\textrm{r}(t)}^p_{\tilde k,-2-\tilde\delta+ \frac 1 p}+\verti{\partial_t v^\textrm{r}(t)}^p_{\tilde k,-2+\tilde\delta+ \frac 1 p}+\verti{v^\textrm{r}(t)}^p_{\tilde k+4,-1-\tilde\delta+ \frac 1 p}+\verti{v^\textrm{r}(t)}^p_{\tilde k+4,-1+\tilde\delta+ \frac 1 p}\big) \d t.
\end{align*}
Take a test function $\phi \in C^\infty_\mathrm{c}(\R)$ with $\int_{\R} \phi \, \d x = 1$ and mollify the left- and right-hand side with $\phi_{\varepsilon}(t):=\tfrac{1}{\varepsilon}\phi(\tfrac{t}{\varepsilon})$. We can then approximate the right-hand side with smooth and compactly supported functions in time. Hence, this also holds for the left-hand side, implying that $u \in BC^0(I;U^{(0)}(\tilde k, \tilde\delta, p))$ and $\vertiii{u(t)}_0 \to 0$ as $t \to \infty$ if $I = [0,\infty)$. Combining this with \eqref{est-u0-u-inf}, we also get $u_0\in BC^0(I)$, $u \in BC^m(I \times [0,\infty))$, as well as $u_0(t) \to 0$ as $t \to \infty$ and $\max_{j = 0,\ldots,m} \sup_{x > 0} \verti{D^j u(t,x)} \to 0$ as $t \to \infty$ if $I = [0,\infty)$.

\medskip

For $u_{\beta}$ we have
\begin{align}
|u_{\beta}(t)|^p &\lesssim \int_1^2 |u_{\beta}(t) x^{\beta}|^p \d x \lesssim_p \int_1^2 |u(t)-u_0(t)|^p \d x +\int_1^2 |u(t)-u_0(t)-u_{\beta}(t)x^{\beta}|^p \d x \nonumber \\
    &\lesssim_{\delta} \int_0^{\infty} x^{-2(\beta-\delta)}|u(t)-u_0(t)|^p \tfrac{\d x}{x}+\int_0^{\infty}x^{-2(\beta+\delta)}|u(t)-u_0(t)-u_{\beta}(t)x^{\beta}|^p\tfrac{\d x}{x} \nonumber \\
    &\leq \verti{u-u_0}^p_{\tilde{k}+8,\beta-\delta} + \verti{u-u_0-u_{\beta}x^{\beta}}^p_{\tilde{k}+8,\beta+\delta}. \label{eq_u_beta}
\end{align}
Multiplying both sides with $t^{p\beta-1}$ and integrating over $t \in I$ gives, in view of \eqref{sol_norm_i} and \eqref{a-priori-perturbed}, $(t \mapsto t^{\beta-\frac 1 p} u_\beta(t)) \in L^p(I)$, and finalizes the proof of estimate \eqref{eq_ineq_coeff}.
\end{proof}

We need the following nonlinear estimates, forming the $L^p_t$-generalization of \cite[Lemma~8.1]{GGKO} with a subtle difference when treating the with respect to the $x^\beta$-contribution supercritical terms in the norm $\vertiii{\mathcal{N}(u)}_{1,I}$ (see \eqref{est-k8delta} below).
\begin{lemma}\label{lemma_estimates_for_fixed_point}
Let $1 < p < \infty$ with $\frac 1 p < \beta$, $0< \delta < \tilde\delta < \min\{\beta - \frac 1 p, \frac 1 p\}$, and $k,\tilde k \in \N_0$ with
\begin{equation}\label{constraint-k}
\tilde k > k + \tfrac{1}{2} + \tfrac 3 p.
\end{equation}
For $u,u^{(1)},u^{(2)} \in U(k,\tilde k, \delta, p,I)$ the following estimates hold for the nonlinearity (cf.~\eqref{eq_def_nonlin})
\begin{subequations}
\begin{align}
    \vertiii{\mathcal{N}(u)}_{1,I} &\lesssim_{k,\tilde k, \delta, \tilde\delta, p} \max_{j=2,5}\vertiii{u}_I^j, \label{eq_bound_N(u)}\\
    \vertiii{\mathcal{N}(u^{(1)})-\mathcal{N}(u^{(2)})}_{1,I} &\lesssim_{k,\tilde k, \delta, \tilde\delta, p} \max_{j=1,4}(\vertiii{u^{(1)}}_I+\vertiii{u^{(2)}}_I)^j\vertiii{u^{(1)}-u^{(2)}}_I.\label{eq_bound_N(u1)-N(u2)}
\end{align}
\end{subequations}
\end{lemma}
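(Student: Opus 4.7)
My plan is to follow the strategy of \cite[Lemma~8.1]{GGKO}, adapted to the $L^p_t$-setting. As a first step I would expand \eqref{eq_def_nonlin} by multilinearity of $\mathcal{M}_n$: since $\mathcal{M}_n(1,\ldots,1) = 0$ (the fourth slot carries the bare $D$, which annihilates constants) and the linear terms $\sum_{i = 1}^{5} \mathcal{M}_n(1,\ldots,u,\ldots,1)$ cancel with $x^{-1} p(D) u$ by definition of $p_n$ (cf.~\eqref{eq_def_p(D)}), the nonlinearity decomposes into a finite sum
\begin{equation*}
\mathcal{N}(u) = \sum_{j = 2}^{5} \sum_{\substack{S \subseteq \{1,\ldots,5\} \\ \verti{S} = j}} \mathcal{N}_S(u), \qquad \mathcal{N}_S(u) := - x^{-1} \mathcal{M}_n(H_1^S,\ldots,H_5^S),
\end{equation*}
where $H_i^S = u$ for $i \in S$ and $H_i^S = 1$ otherwise. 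Each $\mathcal{N}_S(u)$ is thus a $j$-multilinear expression in $u$ of total derivative order at most four, divided by $x$.

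The bulk of the estimate reduces to a weighted-Sobolev product rule of Moser type that controls $\verti{f_1 \cdots f_j}_{m,\alpha_1+\cdots+\alpha_j}$ by a sum in which one factor carries the full weighted $H^m$-norm with weight $\alpha_k$ and the remaining factors are bounded in a weighted $C^m_{t,x}$-norm. Lemma~\ref{lem_coeff_est} supplies precisely such $L^\infty_{t,x}$-control, $\vertii{u}_m \lesssim_{\tilde k, \delta, p} \vertiii{u}_I$ for every $m < \tilde k + \tfrac{15}{2} - \tfrac{4}{p}$, and the assumption \eqref{constraint-k} that $\tilde k > k + \tfrac 1 2 + \tfrac 4 p$ is tailored to guarantee $m \geq k+4$, i.e.\ enough pointwise derivatives to absorb the differential operators inside $\mathcal{M}_n$ on every factor. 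For each of the four summands of $\vertiii{\mathcal{N}(u)}_{1,I}$ in \eqref{f_norm_i} I would place the full weighted norm on one chosen factor, substituting one of the three traces $u$, $u-u_0$, or $u-u_0-u_\beta x^\beta$ according to the target spatial weight: the weight $-1+\tfrac{1}{p}-\tilde\delta$ is matched against the $L^p_t H_{\tilde k+8,\frac{1}{p}-\tilde\delta}$-control of $u$ (the $x^{-1}$ factor simply shifting the weight by $-1$); the weight $-1+\tfrac{1}{p}+\tilde\delta$ against the analogous control of $u-u_0$; the $t^{p\beta-1}$-weighted weight $-1+\beta-\delta$ against $\int_I t^{p\beta-1}\verti{u-u_0}^p_{k+8,\beta-\delta}\,\d t$; and the critical $-1+\beta+\delta$ against $\int_I t^{p\beta-1}\verti{u-u_0-u_\beta x^\beta}^p_{k+8,\beta+\delta}\,\d t$; all of these are summands of $\vertiii{u}_I^p$ (cf.~\eqref{sol_norm_i}).

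The main obstacle is the last, supercritical case. Generically $u-u_0 \sim u_\beta(t)\,x^\beta$ as $x \searrow 0$, so the trace $u-u_0$ alone would fail the weight $\beta+\delta$; subtracting $u_\beta x^\beta$ in the single chosen factor produces an $x^{\beta+\delta}$-admissible remainder, while the remaining factors inside each $\mathcal{N}_S(u)$ are still pointwise controlled by $\vertiii{u}_I$ through Lemma~\ref{lem_coeff_est}. Crucially, this subtraction is performed in one and only one factor per term, and must not propagate to the $L^\infty_{t,x}$-bounded factors (which are merely $u$ or derivatives thereof). This asymmetric handling is the ``subtle difference'' noted by the authors compared to the $L^2_t$-argument of \cite[Lemma~8.1]{GGKO}, where the natural Hilbert structure allows a more symmetric decomposition. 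Summing the resulting bounds gives $\vertiii{\mathcal{N}_S(u)}_{1,I} \lesssim \vertiii{u}_I^{\verti{S}}$ with $\verti{S} \in \{2,\ldots,5\}$, which yields \eqref{eq_bound_N(u)}.

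For the Lipschitz estimate \eqref{eq_bound_N(u1)-N(u2)} I would use the telescoping identity
\begin{equation*}
\mathcal{N}_S(u^{(1)}) - \mathcal{N}_S(u^{(2)}) = \sum_{\ell \in S} \mathcal{N}_{S,\ell}\bigl(u^{(1)} - u^{(2)};\,u^{(1)}, u^{(2)}\bigr),
\end{equation*}
where $\mathcal{N}_{S,\ell}$ denotes the multilinear expression with $u^{(1)} - u^{(2)}$ at position $\ell$, $u^{(1)}$ at positions in $S$ before $\ell$, and $u^{(2)}$ at positions in $S$ after $\ell$. Applying the same product estimate with the weighted Sobolev norm carried by $u^{(1)} - u^{(2)}$ and the $L^\infty_{t,x}$-bounds carried by $u^{(1)}$ and $u^{(2)}$ produces the factor $(\vertiii{u^{(1)}}_I + \vertiii{u^{(2)}}_I)^{\verti{S}-1}\vertiii{u^{(1)} - u^{(2)}}_I$, and maximising over $\verti{S} \in \{2,\ldots,5\}$ gives \eqref{eq_bound_N(u1)-N(u2)}.
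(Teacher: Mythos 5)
Your overall strategy coincides with the paper's: expand $\mathcal{N}(u)$ by multilinearity into at-least-quadratic terms, put the full weighted Sobolev norm on one designated factor (choosing $u-u_0$ or $u-u_0-u_\beta x^\beta$ according to the target weight), and absorb the remaining factors in $L^\infty_{t,x}$ via Lemma~\ref{lem_coeff_est}. This works verbatim for the three subcritical weights (note only that for the weight $-1+\frac 1 p-\tilde\delta$ you must still use $u-u_0$, not $u$, since $\frac 1 p-\tilde\delta>0$ forces vanishing at $x=0$; the corresponding summand of \eqref{sol_norm_i} is indeed $\int_I\verti{u-u_0}^p_{\tilde k+8,\frac 1 p-\tilde\delta}\,\d t$).

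However, your treatment of the supercritical weight $-1+\beta+\delta$ has a genuine gap: replacing the designated factor $u-u_0$ by $u-u_0-u_\beta x^\beta$ is not a substitution but a decomposition, and the leftover term $x^{-1}\mathcal{M}_\mathrm{sym}(u_\beta x^\beta,w^{(2)},\ldots,w^{(5)})$ does not disappear. To handle it one needs three ingredients that your sketch omits. First, $\mathcal{M}_\mathrm{sym}(x^\beta,1,\ldots,1)=\tfrac15 p(D)x^\beta=0$ forces a second non-constant argument, so the term reduces to $x^{-1}\mathcal{M}_\mathrm{sym}(u_\beta x^\beta,u-u_0,\ldots)$. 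Second, since $\verti{x^\beta w}_{\beta+\delta}=\verti{w}_{\delta}$, the resulting bound is $\verti{u_\beta}\,\verti{u-u_0}_{k+8,\delta}$ times $L^\infty$-factors, with $u_\beta$ placed in $L^p(t^{p\beta-1}\d t)$ via \eqref{eq_ineq_coeff}; this requires a \emph{supremum-in-time} bound on $\verti{u-u_0}_{k+8,\delta}$, which is \emph{not} a summand of $\vertiii{u}_I$ — only the interpolation norms with weights $\pm\tilde\delta$ are controlled uniformly in time, and the $L^2$-type norms with weight $\frac1p\pm\tilde\delta$ only in $L^p_t$. The missing step is the embedding $\verti{u-u_0}_{k+8,\delta}\lesssim\verti{u_0}+\verti{u}_{\tilde k+8-\frac4p,-\tilde\delta,p}+\verti{u-u_0}_{\tilde k+8-\frac4p,\tilde\delta,p}$, obtained from \eqref{lower_bound_l2eta} of Lemma~\ref{lemma_char_interpol} with $\kappa=\tilde\delta-\delta>0$ (resp.\ $\kappa=\tilde\delta+\delta$ for $x>1$); this is where the strict inequality $\delta<\tilde\delta$ and the full strength of \eqref{constraint-k} (namely $\tilde k\ge k+\tfrac12+\tfrac3p$) are actually used, and it — not the asymmetric placement of the subtraction, which is already present in \cite{GGKO} — is the ``subtle difference'' the paper refers to. Your justification of \eqref{constraint-k} as merely guaranteeing $m\ge k+4$ pointwise derivatives would only require the much weaker $\tilde k>k-\tfrac72+\tfrac4p$ and therefore does not explain the hypothesis.
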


\begin{proof}
\textbf{Proof of estimate~\eqref{eq_bound_N(u)}.} We start with deriving \eqref{eq_bound_N(u)}. For both regimes $1 < n < \frac 32$ and $\frac 32 < n < 3$, $\mathcal{N}(u)$ has the form
\begin{equation}\label{nonlinearity-structure-1}
\NN(u) \stackrel{\eqref{eq_def_nonlin}}{=} -x^{-1}\MMM(u+1,\dots,u+1) + x^{-1}\mathfrak p(D)u,
\end{equation}
where $\mathfrak p(D)$ and $\MMM$ are defined as in \eqref{eq_def_p(D)} and \eqref{eq_def_M}, respectively. Hence, $\NN(u)$ consists of the super-linear terms of $x^{-1} \MMM(u+1,\dots,u+1)$. Denote by $\MMM_\mathrm{sym}$ the symmetrization of $\MMM$. Note that $\MMM_\mathrm{sym}$ is multi-linear in all of its arguments. Because of this, $\NN(u)$ can be written as a linear combination with constant coefficients of terms of the form
\begin{equation}\label{eq_form_N}
x^{-1}\MMM_\mathrm{sym}(u,u,w^{(3)},w^{(4)},w^{(5)}),\quad\text{with } w^{(3)},w^{(4)},w^{(5)} \in\{u, 1\}.
\end{equation}
Using the multi-linearity of $\MMM_\mathrm{sym}$ and noting that evaluating $\MMM_\mathrm{sym}$ for constants in all arguments yields zero because of \eqref{nonlinearity-structure-1} and
\begin{equation}\label{nonlinearity-structure-2}
\MMM_\mathrm{sym}(1,1,1,1,1) \stackrel{\eqref{eq_def_p(D)_1<n<3/2}, \eqref{eq_def_p(D)_3/2<n<3}}{=} \tfrac{1}{5} \mathfrak p(D)1 = 0,
\end{equation}
we see that only terms that consist of at least one in $x$ non-constant argument of $\MMM_\mathrm{sym}$ remain and therefore, $\NN(u)$ can be written as a linear combination with constant coefficients of
\begin{equation}\label{eq_rewriting_M_step2}
x^{-1} \MMM_\mathrm{sym}(u-u_0,w^{(2)},w^{(3)},w^{(4)},w^{(5)}), \quad w^{(2)}\in\{u-u_0,u_0\}, \ w^{(3)},w^{(4)},w^{(5)} \in \{u-u_0,u_0,1\}.
\end{equation}
We consider $\vertiii{\NN(u)}_{1,I}$ (cf.~\eqref{f_norm_i}) consisting of norms of $\NN(u)$ with weights $-1+\frac{1}{p}\pm\delta, -1+\beta \pm \delta$ and $\tilde k+4$, respectively $k+4$ derivatives. First consider the case which is subcritical with respect to the term $x^{-1+\beta}$, which consists of the parts of the norm with weights $-1+\tfrac{1}{p}\pm \delta$ and $-1+\beta-\delta$. We explain of which terms $D^l (x^{-1} \NN(u))$, with $l \le k+4$ or $l \leq \tilde k +4$, consists. Note that $\MMM_\mathrm{sym}$ distributes four derivatives onto its arguments. Hence, $D^l (x^{-1} \NN(u))$ can be written as a product of terms $x^{-1} v^{(1)} \times  v^{(2)} \times v^{(3)} \times v^{(4)} \times v^{(5)}$, where
\begin{subequations}\label{v1-v5-sub}
\begin{alignat}{2}
    &v^{(1)} = D^{l_1}(u-u_0),\quad &&l_1\leq l+4 \\
    &v^{(2)} \in \{D^{l_2}(u-u_0),u_0\},\quad &&l_2\leq \tfrac{l+4}{2},\\
    &v^{(3)}, v^{(4)}, v^{(5)} \in \{D^{l_3}(u-u_0),u_0,1\},\quad &&l_3\leq \tfrac{l+4}{2}.
\end{alignat}
\end{subequations}
From \eqref{eq_ineq_coeff} of Lemma~\ref{lem_coeff_est}, we note that the supremum over $(t,x) \in I \times (0,\infty)$ of $v^{(2)}$, $v^{(3)}$, $v^{(4)}$, and $v^{(5)}$ can be bounded by $\vertiii{u}$ provided $\frac{\max\{k,\tilde k\}+8}{2} < \tilde k + \frac{15}{2} - \frac 4 p$, that is,
\[
\tilde k > \tfrac k 2 - \tfrac 7 2 + \tfrac 4 p \qquad \text{and} \qquad \tilde k > \tfrac 8 p - 7,
\]
which is implied by \eqref{constraint-k}. Hence, for the part of $\vertiii{\NN(u)}_{1,I}$ consisting of the norms with weights $-1+\frac 1 p \pm\delta$ and $-1+\beta-\delta$ the following estimate holds
\begin{align}
    &\int_I \Big(\verti{\NN(u)}^p_{\tilde k+4,-1+\frac{1}{p}-\tilde\delta} + \verti{\NN(u)}^p_{\tilde k+4,-1+\frac{1}{p}+\tilde\delta}+t^{p\beta-1}\verti{\NN(u)}^p_{k+4,-1+\beta-\delta}\Big) \d t \nonumber\\
    &\lesssim_{k,\tilde k, \delta, \tilde\delta, p} \int_I \Big(\verti{u-u_0}^p_{\tilde k+8,\tfrac{1}{p}-\tilde\delta}+\verti{u-u_0}^p_{\tilde k+8,\tfrac{1}{p}+\tilde\delta}+t^{p\beta-1}\verti{u-u_0}^p_{k+8,\beta-\delta}\Big) \d t\nonumber \\
    &\phantom{\lesssim_{k,\tilde k, \delta, \tilde\delta, p}} \times \big(\sup_{t\geq0} \big(\vertii{u-u_0}_{l_2,I}^p +|u_0|^p\big)\big) \times \big(1+\big(\sup_{t\geq 0}(\vertii{u-u_0}_{l_2,I}^p+|u_0|^p)\big)^3\big) \nonumber\\
    &\stackrel{\eqref{sol_norm_i},\eqref{eq_ineq_coeff}}{\lesssim_{\tilde k, \delta ,p}}\vertiii{u}_I^{2p}(1+\vertiii{u}_I^{3p}).\label{eq_bound_N}
\end{align}

Now, consider the supercritical case, consisting of the part $\int_I t^{p\beta-1}\verti{\NN(u)}^p_{k+4,-1+\beta+\delta} \, \d t$ of the norm $\vertiii{\NN(u)}_{1,I}$ with weight $\beta+\delta$. In this case, \eqref{eq_rewriting_M_step2} turns into one of the following
\begin{align}
x^{-1}\MMM_\mathrm{sym}(u-u_0-u_{\beta}x^{\beta},w^{(2)},w^{(3)},w^{(4)},w^{(5)}),\quad &w^{(2)}\in\{u-u_0,u_0\}, \nonumber \\
& w^{(3)},w^{(4)},w^{(5)}\in\{u-u_0,u_0,1\}\label{eq_rewriting_M_>beta_1},\\
x^{-1}\MMM_\mathrm{sym}(u_\beta x^{\beta},w^{(2)},w^{(3)},w^{(4)},w^{(5)}),\quad &w^{(2)}\in\{u-u_0,u_0\}, \nonumber \\
& w^{(3)},w^{(4)},w^{(5)} \in\{u-u_0,u_0,1\}.\label{eq_rewriting_M>beta_2}
\end{align}
Noting that
\begin{equation}\label{eq_reason_nonconstant}
\MMM_\mathrm{sym}(x^{\beta},1,1,1,1)=\tfrac{1}{5} \mathfrak p(D)x^{\beta}=0,
\end{equation}
since $\beta$ is a root of $\mathfrak p(D)$, \eqref{eq_rewriting_M>beta_2} can be simplified to
\begin{equation}\label{eq_rewriting_m>beta_3}
x^{-1} \MMM_\mathrm{sym}(u_{\beta}x^{\beta},u-u_0,w^{(3)},w^{(4)},w^{(5)}),\quad w^{(3)},w^{(4)},w^{(5)}\in\{u-u_0,u_0,1\}.
\end{equation}
We will argue similarly to before for the term $\int_I t^{p\beta-1}\verti{\NN(u)}^p_{k+4,-1+\beta+\delta} \, \d t$. In both cases we use that again $\MMM_\mathrm{sym}$ distributes four derivatives onto its arguments. First, considering the form as given in \eqref{eq_rewriting_M_>beta_1}. These terms in $D^l (x^{-1} \NN(u))$, where $l\leq k+4$, are a linear combination with constant coefficients of terms of the form $x^{-1} v^{(1)}\times\dots\times v^{(5)}$ given by
\begin{subequations}\label{v1-v5-beta-1}
\begin{alignat}{2}
    &v^{(1)} = D^{l_1}(u-u_0-u_{\beta}x^{\beta}),\quad &&l_1\leq l+4\leq k+8,\\
    &v^{(2)} \in \{D^{l_2}(u-u_0),u_0\},\quad &&l_2\leq l+4\leq k+8,\\
    &v^{(3)}, v^{(4)}, v^{(5)} \in \{D^{l_3}(u-u_0),u_0,1\}, \quad &&l_3\leq \tfrac{l+4}{2} \leq k+8.
\end{alignat}
\end{subequations}
This gives
\begin{align}\nonumber
    \verti{x^{-1} v^{(1)} \times \dots \times v^{(5)}}^p_{-1+\beta+\delta}\lesssim& \verti{u-u_0-u_{\beta}x^{\beta}}^p_{k+8,\beta+\delta} \big(\vertii{u-u_0}_{k+8,I}^p+|u_0|^p\big) \\
    &\times \big(1+(\vertii{u-u_0}_{k+8,I}^p+|u_0|^p)^3\big). \label{eq_bound_v_1...v_5_1}
\end{align}
\medskip

Secondly, considering \eqref{eq_rewriting_m>beta_3} we see that terms in $D^l (x^{-1} \NN(u))$, where $l\leq k+4$, are a linear combination with constant coefficients of terms of the form $x^{-1} v^{(1)} \times \dots \times v^{(5)}$ given by
\begin{alignat}{2}
    &v^{(1)} =u_{\beta},\nonumber\\
    &v^{(2)} = x^{\beta}D^{l_1}(u-u_0),\quad &&l_1\leq l+4 \le k+8,\nonumber\\
    &v^{(3)}, v^{(4)}, v^{(5)} \in \{D^{l_2}(u-u_0),u_0,1\},\quad &&l_2\leq \tfrac{l+4}{2} \le k+8. \nonumber
\end{alignat}
This gives, using that $\verti{x^{\beta}\cdot}_{\beta+\delta}=\verti{\cdot}_{\delta}$,
\begin{align}\label{eq_bound_v_1...v_5_2}
\verti{x^{-1} v^{(1)} \times\dots\times v^{(5)}}^p_{\beta+\delta}\lesssim |u_{\beta}|^p \verti{u-u_0}^p_{k+8,\delta} \big(1+\big(\vertii{u}_{k+8,I}^p+|u_0|^p\big)^3\big).
\end{align}
Here is where we deviate significantly from \cite[Lemma~8.1]{GGKO} and the choice $\tilde\delta > \delta$ becomes crucial at least for $p > 2$. We notice that
\[
\verti{u-u_0}_{k+8,\delta}^2 = \sum_{j = 0}^{k+8} \int_0^1 x^{-2\delta} (D^j(u-u_0))^2 \, \tfrac{\d x}{x} + \sum_{j = 0}^{k+8} \int_1^\infty x^{-2\delta} (D^j(u-u_0))^2 \, \tfrac{\d x}{x}.
\]
Choosing $\kappa := \tilde\delta-\delta$ in \eqref{lower_bound_l2eta} of Lemma~\ref{lemma_char_interpol}, we get
\[
\sum_{j = 0}^{k+8} \int_0^1 x^{-2\delta} (D^j(u-u_0))^2 \, \tfrac{\d x}{x} \lesssim_{k,\tilde k, \delta, \tilde \delta, p} \verti{u-u_0}_{\tilde k + 8 - \frac 4 p, \tilde\delta,p}^2
\]
provided $\tilde k \ge k+\tfrac 1 2+ \tfrac 3 p$, which is implied by \eqref{constraint-k}. Secondly, we get under the same constraint,
\begin{align*}
\sum_{j = 0}^{k+8} \int_1^\infty x^{-2\delta} (D^j(u-u_0))^2 \, \tfrac{\d x}{x} &\lesssim_\delta \verti{u_0}^2 + \sum_{j = 0}^{k+8} \int_1^\infty x^{-2\delta} (D^ju)^2 \, \tfrac{\d x}{x} \\
&\lesssim_{k,\tilde k, \delta, \tilde \delta, p} \verti{u_0}^2 + \verti{u}_{\tilde k + 8 - \frac 4 p, - \tilde\delta,p}^2,
\end{align*}
where $\kappa := - \delta - \tilde\delta$ was chosen in \eqref{lower_bound_l2eta} of Lemma~\ref{lemma_char_interpol}. Hence,
\begin{equation}\label{est-k8delta}
\verti{u-u_0}_{k+8,\delta} \lesssim_{k,\tilde k, \delta, \tilde \delta, p} \verti{u_0} + \verti{u}_{\tilde k + 8 - \frac 4 p, - \tilde\delta,p} + \verti{u-u_0}_{\tilde k + 8 - \frac 4 p, \tilde\delta,p}.
\end{equation}
Combining \eqref{eq_bound_v_1...v_5_1} and \eqref{eq_bound_v_1...v_5_2} gives, using $L^p$ bounds in time for the first term and supremum bounds in time for the rest of the terms,
\begin{align}
&\int_I t^{p\beta-1}\verti{\NN(u)}^p_{k+4,-1+\beta+\delta} \, \d t \nonumber \\
&\quad\stackrel{\eqref{est-k8delta}}{\lesssim_{k,\tilde k, \delta, \tilde \delta, p}} \int_I t^{p\beta-1} \big(\verti{u-u_0-u_{\beta}x^{\beta}}^p_{k+8,\beta+\delta} +|u_\beta|^p\big) \d t \nonumber \\
    &\quad\phantom{\stackrel{\eqref{est-k8delta}}{\lesssim_{k,\tilde k, \delta, \tilde \delta, p}}} \times \Big[\vertii{u-u_0}_{k+8,I}^p + \sup_{t\in I}\big(\verti{u_0}^p + \verti{u}_{\tilde k + 8 - \frac 4 p, - \tilde\delta,p}^p + \verti{u-u_0}_{\tilde k + 8 - \frac 4 p, \tilde\delta,p}^p\big)\Big] \nonumber\\
    &\quad\phantom{\stackrel{\eqref{est-k8delta}}{\lesssim_{k,\tilde k, \delta, \tilde \delta, p}}} \times \Big[1+\big(\vertii{u-u_0}_{k+8,I}^p+\sup_{t \in I} |u_0|^p\big)^3\Big]\nonumber\\
    &\quad \stackrel{\eqref{eq_ineq_coeff}}{\lesssim_{k,\tilde k, \delta, \tilde \delta, p}} \vertiii{u}_I^{2p}(1+\vertiii{u}_I^{3p}).\label{eq_bound_N>beta}
\end{align}
where Lemma~\ref{eq_ineq_coeff} and the constraint \eqref{constraint-k} were used.

\medskip

Equation \eqref{eq_bound_N(u)} now follows by adding  \eqref{eq_bound_N} and \eqref{eq_bound_N>beta} and taking the $p^{\text{th}}$ root.

\medskip

\textbf{Proof of estimate~\eqref{eq_bound_N(u1)-N(u2)}.} We only sketch the differences compared to the proof of estimate~\eqref{eq_bound_N(u)}. Because $\MMM_\mathrm{sym}$ is multi-linear, it holds that $\NN(u^{(1)})-\NN(u^{(2)})$ is a linear combination with constant coefficients of terms of the form
\begin{equation*}
x^{-1} \MMM_\mathrm{sym}(u^{(1)}-u^{(2)},w^{(2)},w^{(3)},w^{(4)},w^{(5)}),\quad w^{(2)} \in \{u^{(1)},u^{(2)}\}, \ \ w^{(3)}, w^{(4)}, w^{(5)} \in\{u^{(1)},u^{(2)},1\}.
\end{equation*}
For the subcritical part of the norm $\vertiii{\mathcal N(u^{(1)})-\mathcal N(u^{(2)})}_1$, which consists of norms with weight $\tfrac{1}{p}\pm\delta$ and $\beta-\delta$, the terms from above are rewritten as
\begin{align*}
    x^{-1} \MMM_\mathrm{sym}(w^{(1)},w^{(2)},w^{(3)},w^{(4)},w^{(5)}), \qquad &w^{(1)} \in\{u^{(1)}-u^{(1)}_0-(u^{(2)}-u^{(2)}_0),u^{(1)}_0-u^{(2)}_0\},\\
    &w^{(2)} \in \{u^{(1)}-u^{(1)}_0,u^{(2)}-u^{(2)}_0,u^{(1)}_0,u^{(2)}_0\},\\
    &w^{(3)}, w^{(4)}, w^{(5)} \in \{u^{(1)}-u^{(1)}_0,u^{(2)}-u^{(2)}_0,u^{(1)}_0,u^{(2)}_0,1\}.
\end{align*}
Note that before we could argue that the $w^{(1)}$ had to be non-constant in $x$, but this reasoning does not hold here. Hence, we have to work with both the non-constant and constant choices for $w^{(1)}$. First, consider the case that $w^{(1)} = u^{(1)} - u^{(1)}_0 - (u^{(2)} - u^{(2)}_0)$. In this case, $D^l(x^{-1} \NN(u^{(1)})-x^{-1} \NN(u^{(2)}))$ is, for $l\leq \tilde{k}+4$ or $l \le k+4$, a linear combination of terms of the form  $x^{-1} v^{(1)} \times\dots\times v^{(5)}$ with
\begin{alignat*}{2}
    v^{(1)} &= D^{l_1}\big(u^{(1)} -u^{(1)}_0-(u^{(2)}-u^{(2)}_0)\big), \quad &&l_1\leq l+4,\\
    v^{(2)} &\in \big\{D^{l_2}(u^{(1)} -u^{(1)}_0),D^{l_2}(u^{(2)}-u^{(2)}_0),u^{(1)}_0,u^{(2)}_0\big\},\quad &&l_2\leq l+4,\\
    v^{(3)}, v^{(4)}, v^{(5)} &\in \big\{D^{l_3}(u^{(1)} -u^{(1)}_0),D^{l_3}(u^{(2)}-u^{(2)}_0),u^{(1)}_0,u^{(2)}_0,1\big\},\quad &&l_3\leq\tfrac{l+4}{2}.
\end{alignat*}
Note that we can in fact assume either $l_1 \le \frac{l+4}{2}$ or $l_2 \le \frac{l+4}{2}$. Hence, up to relabelling $v^{(1)}$ and $v^{(2)}$, the situation is analogous to \eqref{v1-v5-sub}.

\medskip

In the case that $w^{(1)} = u^{(1)}_0-u^{(2)}_0$, $D^l(x^{-1}\NN(u^{(1)})-x^{-1}\NN(u^{(2)}))$ is for $l\leq k+4$ or $l \leq \tilde{k}+4$ a linear combination with constant coefficients of terms of the form $x^{-1} v^{(1)}\times\dots\times v_5$ with
\begin{alignat*}{2}
    v^{(1)} &=u^{(1)}_0-u^{(2)}_0,\\
     v^{(2)} &\in \big\{D^{l_2}(u^{(1)} -u^{(1)}_0),D^{l_2}(u^{(2)}-u^{(2)}_0)\big\},\quad &&l_2\leq l+4,\\
    v^{(3)}, v^{(4)}, v^{(5)} &\in \big\{D^{l_3}(u^{(1)} -u^{(1)}_0),D^{l_3}(u^{(2)} - u^{(2)}_0),u^{(1)}_0,u^{(2)}_0,1\big\},\quad &&l_3 \leq \tfrac{l+4}{2}.
\end{alignat*}
Note that in $x$ constant terms can be excluded because of \eqref{nonlinearity-structure-1} and \eqref{nonlinearity-structure-2}. Up to relabeling (interchanging $v^{(1)}$ and $v^{(2)}$), this term can be treated as those appearing in the context of \eqref{v1-v5-sub}. In summary, the same reasoning as in \eqref{eq_bound_N} applies and yields
\begin{align}
    &\int_I \Big(\verti{\NN(u^{(1)})-\NN(u^{(2)})}^p_{\tilde k+4,-1+\frac{1}{p}-\tilde\delta} + \verti{\NN(u^{(1)})-\NN(u^{(2)})}^p_{\tilde k+4,-1+\frac{1}{p}+\tilde\delta}\Big) \d t \nonumber \\
    &+ \int_I t^{p\beta-1}\verti{\NN(u^{(1)})-\NN(u^{(2)})}^p_{k+4,-1+\beta-\delta} \d t \nonumber\\
    &\lesssim_{k,\tilde k, \delta, \tilde\delta, p} \vertiii{u^{(1)}-u^{(2)}}^p \big(\vertiii{u^{(1)}}_I^{p}+\vertiii{u^{(2)}}_I^{p}\big) \big(1+\vertiii{u^{(1)}}_I^{3p}+\vertiii{u^{(2)}}_I^{3p}\big).\label{eq_bounds_nonlinearity}
\end{align}
\medskip

For the super-critical part of the norm, the part with weight $\beta+\delta$, we decompose
\begin{align*}
   x^{-1} \MMM_\mathrm{sym}(u^{(1)}_0-u^{(2)}_0,w^{(2)},w^{(3)},w^{(4)},w^{(5)}),\qquad &w^{(2)} \in \big\{u^{(1)} -u^{(1)}_0,u^{(2)} -u^{(2)}_0\big\},\\
    &w^{(3)},w^{(4)},w^{(5)} \in \big\{u^{(1)} -u^{(1)}_0,u^{(2)} -u^{(2)}_0,u^{(1)}_0, u^{(2)}_0,1\big\}
\end{align*}
as
\begin{align}
x^{-1} \MMM_\mathrm{sym}(u^{(1)}_0-u^{(2)}_0,w^{(2)},w^{(3)},&w^{(4)},w^{(5)}), \nonumber \\
 &w^{(2)}\in \big\{u^{(1)} -u^{(1)}_0-u^{(1)}_\beta x^\beta,u^{(2)} -u^{(2)}_0-u^{(2)}_\beta x^{\beta}\big\},\nonumber\\
&w^{(3)},w^{(4)},w^{(5)} \in \big\{u^{(1)} -u^{(1)}_0,u^{(2)} -u^{(2)}_0,u^{(1)}_0,u^{(2)}_0,1\big\},\label{eq_decomposition_nonlin_1}
\end{align}
and
\begin{align}
x^{-1} \MMM_\mathrm{sym}(u^{(1)}_0-u^{(2)}_0,w^{(2)},w^{(3)},w^{(4)},w^{(5)}), \quad &w^{(2)} \in \big\{u^{(1)}_\beta x^\beta,u^{(2)}_\beta x^\beta\big\},\nonumber\\
&w^{(3)} \in \big\{u^{(1)}-u^{(1)}_0,u^{(2)}-u^{(2)}_0\big\},\nonumber\\
&w^{(4)},w^{(5)} \in \big\{u^{(1)} -u^{(1)}_0,u^{(2)} -u^{(2)}_0,u^{(1)}_0,u^{(2)}_0,1\big\},\label{eq_decomposition_nonlin_2}
\end{align}
where in the last case $w^{(3)}$ is non-constant in $x$ because of \eqref{eq_reason_nonconstant}. For $D^l(x^{-1} \NN(u^{(1)})-x^{-1}\NN(u^{(2)}))$, $l\leq k+4$, the terms from \eqref{eq_decomposition_nonlin_1} lead to a linear combination with constant coefficients of terms of the form $x^{-1} v^{(1)}\times\dots\times v^{(5)}$ with
\begin{alignat}{2}
    &v^{(1)} = u^{(1)}_0 - u^{(2)}_0,\nonumber\\
    &v^{(2)} \in \big\{D^{l_1}(u^{(1)} -u^{(1)}_0-u^{(1)}_\beta x^\beta),D^{l_1}(u^{(2)} -u^{(2)}_0-u^{(1)}_\beta x^\beta)\big\}, \quad &&l_1\leq l+4,\nonumber\\
    &v^{(3)},v^{(4)},v^{(5)} \in \big\{D^{l_2}(u^{(1)}-u^{(1)}_0),D^{l_2}(u^{(2)}-u^{(2)}_0),u^{(1)}_0,u^{(2)}_0,1\big\}, \quad &&l_2\leq l+4.\nonumber
\end{alignat}
This is analogous to the decomposition in \eqref{v1-v5-beta-1} on interchanging $v^{(1)}$ and $v^{(2)}$.

\medskip

For $D^l(x^{-1}\NN(u^{(1)})-x^{-1}\NN(u^{(2)}))$, $l\leq k+4$, the terms from \eqref{eq_decomposition_nonlin_2} lead to a linear combination with constant coefficients of terms of the form $x^{-1} v^{(1)} \times\dots\times v^{(5)}$ with
\begin{alignat*}{2}
    &v^{(1)} = u^{(1)}_0-u^{(2)}_0,\\
    &v^{(2)} \in \big\{u^{(1)}_\beta,u^{(2)}_\beta\big\},\\
    &v^{(3)} \in \big\{x^{\beta}D^{l_1}(u^{(1)}-u^{(1)}_0),x^{\beta}D^{l_1}(u^{(2)}-u^{(2)}_0)\big\},\quad&&l_1 \leq l+4 \le k+8, \\
    &v^{(4)},v^{(5)} \in \big\{D^{l_2}(u^{(1)}-u^{(1)}_0),D^{l_2}(u^{(2)}-u^{(2)}_0),u^{(1)}_0,u^{(2)}_0,1\big\}, \quad &&l_2\leq \tfrac{l+4}{2} \le k+8.
\end{alignat*}
In this case, we can bound
\begin{align}\label{eq_bound_v_1...v_5_beta}
\verti{x^{-1} v^{(1)} \times\dots\times v^{(5)}}^p_{\beta+\delta}\lesssim& \verti{u_0^{(1)}-u_0^{(2)}}^p \big(|u^{(1)}_{\beta}|^p+|u^{(2)}_{\beta}|^p \big) \big(\verti{u^{(1)}-u^{(1)}_0}^p_{k+8,\delta}+\verti{u^{(2)}-u^{(2)}_0}^p_{k+8,\delta}\big) \nonumber \\
&\times \big(1+\big(\vertii{u^{(1)}}_{k+8,I}^p+|u^{(1)}_0|^p+\vertii{u^{(2)}}_{k+8,I}^p+|u^{(2)}_0|^p\big)^3\big).
\end{align}
Using \eqref{est-k8delta}, we then obtain
\begin{align*}
\int_I t^{p\beta-1}\verti{\NN(u^{(1)})-\NN(u^{(2)})}^p_{k+4,-1+\beta+\delta} \, \d t \lesssim_{k,\tilde k, \delta, \tilde \delta, p}&\vertiii{u^{(1)}-u^{(2)}}_I^p \big(\vertiii{u^{(1)}}_I^{p}+\vertiii{u^{(2)}}_I^{p}\big) \\
&\times \big(1+\vertiii{u^{(1)}}_I^{3p}+\vertiii{u^{(2)}}_I^{3p}\big),
\end{align*}
which in conjunction with \eqref{eq_bounds_nonlinearity} finishes the proof of \eqref{eq_bound_N(u1)-N(u2)}.
\end{proof}
%

\subsection{Proof of the Main Result}\label{sec_proof_main_result}
Here, we will prove Theorem~\ref{Main_thm}, where we will make use of the results from Proposition~\ref{prop_sol_Au=v} and Lemma~\ref{lemma_estimates_for_fixed_point}. Note that the proof is entirely standard but for the sake of completeness, we nonetheless give all details.

\begin{proof}[Proof of Theorem~\ref{Main_thm}]
In the proof, all estimates depend on $k$, $\tilde k$, $\delta$, $\tilde \delta$, and $p$.

\medskip

\textbf{Existence.}
For $\eps>0$ to be determined below let $u^{(0)} \in U^{(0)}(\tilde k, \tilde\delta, p)$ with $\vertiii{u^{(0)}}_0<\eps$ and define
\[
S := \big\{u \in U(k,\tilde k, \delta, \tilde\delta, p) \colon \vertiii{u} \le\eta, u|_{t=0}=u^{(0)}\big\}
\]
for $\eta>0$ to be determined in what follows. Let $\SSS$ be the solution operator constructed in Proposition~\ref{prop_sol_Au=v}. Then \eqref{eq_nonlin_cauchy} can be recast in the fixed-point problem
\begin{equation}\label{fixed}
u=\TT(u):=\SSS \NN(u).
\end{equation}
Our aim is to apply Banach's contraction-mapping principle to \eqref{fixed}. Therefore, for $u \in S$ by the maximal-regularity estimate \eqref{a-priori-perturbed} and the nonlinear estimate \eqref{eq_bound_N(u)} it follows that
\[
\vertiii{\TT(u)} =\vertiii{\SSS\NN(u)} \stackrel{\eqref{a-priori-perturbed}}{\lesssim}\vertiii{u^{(0)}}_0 +\vertiii{\NN(u)}_1 \stackrel{\eqref{eq_bound_N(u)}}{\lesssim} \vertiii{u^{(0)}}_0+ \max_{j=2,5}\vertiii{u}^j,
\]
so that $\vertiii{\TT(u)} =\vertiii{\SSS\NN(u)} \le C(\eps+\eta^2)$ for $\eta \le 1$ and $C < \infty$ only depending on $k$, $\tilde k$, $\delta$, $\tilde \delta$, and $p$. If we take $\eps := \eta^2$ and $\eta \le \frac{1}{2C}$, we infer that $\TT$ maps $S$ into itself. For showing the contraction property, suppose that $u^{(1)}, u^{(2)} \in S$. By \eqref{a-priori-perturbed} and \eqref{eq_bound_N(u1)-N(u2)} it follows that
\begin{align*}
    \vertiii{\TT(u^{(1)}|_I)-\TT(u^{(2)}|_I)}_I &= \vertiii{\SSS(\NN(u^{(1)}|_I)-\NN(u^{(2)}|_I))}_I \stackrel{\eqref{a-priori-perturbed}}{\lesssim} \vertiii{\NN(u^{(1)}|_I)-\NN(u^{(2)}|_I)}_{1,I},
\end{align*}
for $I = [0,T]$ with $T \in (0,\infty)$ or $I = [0,\infty)$,
so that by \eqref{eq_bound_N(u1)-N(u2)},
\begin{equation}\label{eq_bound_T_2}
\vertiii{\TT(u^{(1)}|_I)-\TT(u^{(2)}|_I)}_I \le \tilde C \max_{j=1,4}(\vertiii{u^{(1)}|_I}_I+\vertiii{u^{(2)}|_I}_I)^j \vertiii{u^{(1)}-u^{(2)}}_I
\end{equation}
for $\tilde C < \infty$ only depending on $k$, $\tilde k$, $\delta$, $\tilde \delta$, and $p$. Hence, $\TT \colon S \to S$ is a contraction for $\eta < \min\{\frac 1 2,\frac{1}{2\tilde C}\}$. Now applying the contraction-mapping principle gives existence and uniqueness of a solution $u$ to \eqref{fixed} and thus to \eqref{eq_nonlin_cauchy}.

\medskip

\textbf{Uniqueness.} Let $w \in U(k,\tilde k, \delta, \tilde\delta, p)$ be another solution to \eqref{eq_nonlin_cauchy}. Let $t^* \in (0,\infty)$ be the maximal time such that $w|_{[0,t^*)} = u|_{[0,t^*)}$. By continuity in time in $U^{(0)}(\tilde k, \tilde\delta, p)$ we must have $w(t^*) = u(t^*)$. By a time shift we may assume without loss of generality $t^* = 0$. By dominated convergence and in view of the definitions \eqref{eq_def_norm_u^0} and \eqref{sol_norm_i} of $\vertiii{\cdot}_0$ and $\vertiii{\cdot}_I$, respectively, we have $\vertiii{u|_I}_I \to \vertiii{u^{(0)}}_0\le \eta^2$ and $\vertiii{w|_I}_I \to \vertiii{u^{(0)}}_0\le \eta^2$ as $\verti{I} \searrow 0$. Since $\eta < \min\{\frac 1 2,\frac{1}{2\tilde C}\}$ and because $w = \TT(w)$ as well as $u = \TT(u)$, \eqref{eq_bound_T_2} entails $u|_{[0,T]} = w|_{[0,T]}$ for $T > 0$ sufficiently small, so that $t^*$ was not maximal.

\medskip

\textbf{Continuity and convergence.} The fact that $[0,\infty) \owns t \mapsto \vertiii{u(t)}_0$ is continuous and $\vertiii{u(t)}_0 \to 0$ as $t \to \infty$ follows from Lemma \ref{lem_coeff_est}.
\end{proof}
%

\section{Concluding remarks}\label{sec-conclusion}
We end with some concluding remarks on coercivity. In \cite[Lemma~5.2~(b)]{GGKO} it was found that a fourth-order polynomial operator $\mathfrak p(D) = \prod_{j = 1}^4 (D - \gamma_j)$ is coercive in the sense of \eqref{coercivity-pd} if and only if there exists a constant $L_\alpha > 0$ such that $\Re \mathfrak p(i\xi+\alpha) \ge L_\alpha$ for all $\xi \in \R$. This follows on commuting $x^{-\alp}$ with $D$, passing to $s := \ln x$, and using the Fourier transform and Plancherel's identity. As in the proof of \cite[Proposition~5.3]{GGKO} we find
\begin{align*}
\Re \mathfrak p(i\xi+\alpha) &= \kappa^2 - 2 a \kappa + b = (\kappa - a)^2 + b - a^2, & \text{where } \kappa &:= \xi^2 \ge 0, \\
a &:= \frac 1 2 \sum_{1 \le j < \ell \le 4} (\gamma_j-\alpha) (\gamma_\ell-\alpha), & b &:= \prod_{j = 1}^4 (\gamma_j-\alpha)
\end{align*}
Because of $\kappa \ge 0$ we infer that indeed $\Re \mathfrak p(i\xi+\alpha) \ge L_\alpha$ for all $\xi \in \R$ and an $L_\alpha > 0$ if and only if one of the following two conditions holds true:
\begin{enumerate}[label={(\Roman*)}]
\item\label{it:cond_i} $a \le 0$ and $b > 0$,
\item\label{it:cond_ii} $b > a^2$.
\end{enumerate}
The condition $b > 0$ in \ref{it:cond_i} can be reformulated to \eqref{eq_condition_coerc_1}, while the condition $a \le 0$ in \ref{it:cond_i} is equivalent to \eqref{eq_condition_coerc_2}. For solving $b > a^2$ in \ref{it:cond_ii}, we need to find the roots of the fourth-order polynomial $b - a^2$ in $\alpha$. Though explicit characterizations in terms of radicals of $\gamma_j$ can be found, these turn out to be quite involved. For the range $0 < n < 3/2$ (\S\ref{sssec:0n32} also applies in this range) with the polynomial $\mathfrak p(D)$ given by \eqref{eq_zeros_1}, we find with the software \emph{Mathematica} the algebraic expressions
\begin{equation}\label{alp_cond2}
\alpha = \frac{4n-2 + \sigma_1\sqrt{4 n^2-12 n+13 + \sigma_2\sqrt{16 n^4-96 n^3+120 n^2+72 n-119}}}{8 n-16},
\end{equation}
where $\sigma_1, \sigma_2 \in \{-1,+1\}$. The expressions for $3/2 < n < 3$ (cf.~\S\ref{sssec:32n3} and \eqref{eq_zeros_2} for the choice of $\gamma_j$) are much more involved, so that we omit them here. We find the factorization
\[
16 n^4-96 n^3+120 n^2+72 n-119 = 16 \big(n-(\tfrac 1 2 - \sqrt 2)\big) \big(n-(\tfrac 5 2 - \sqrt 2)\big) \big(n-(\tfrac 1 2 + \sqrt 2)\big) \big(n-(\tfrac 5 2 + \sqrt 2)\big).
\]
We have
\[
\tfrac 1 2 - \sqrt 2 < 0 < 1 < \tfrac 5 2 - \sqrt 2 < \tfrac 3 2 < \tfrac 1 2 + \sqrt 2 < \tfrac 5 2 + \sqrt 2.
\]
This entails that $\alpha$ given by \eqref{alp_cond2} is not real for $n < \tfrac 5 2 - \sqrt 2 > 1$ for any choice of $\sigma_1, \sigma_2 \in \{-1,+1\}$ since then $16 n^4-96 n^3+120 n^2+72 n-119 < 0$.

\medskip

The entire coercivity range is displayed in Figure~\ref{fig:coercivity-full}.
\begin{figure}[htp]
    \centering
    \includegraphics[width=\textwidth]{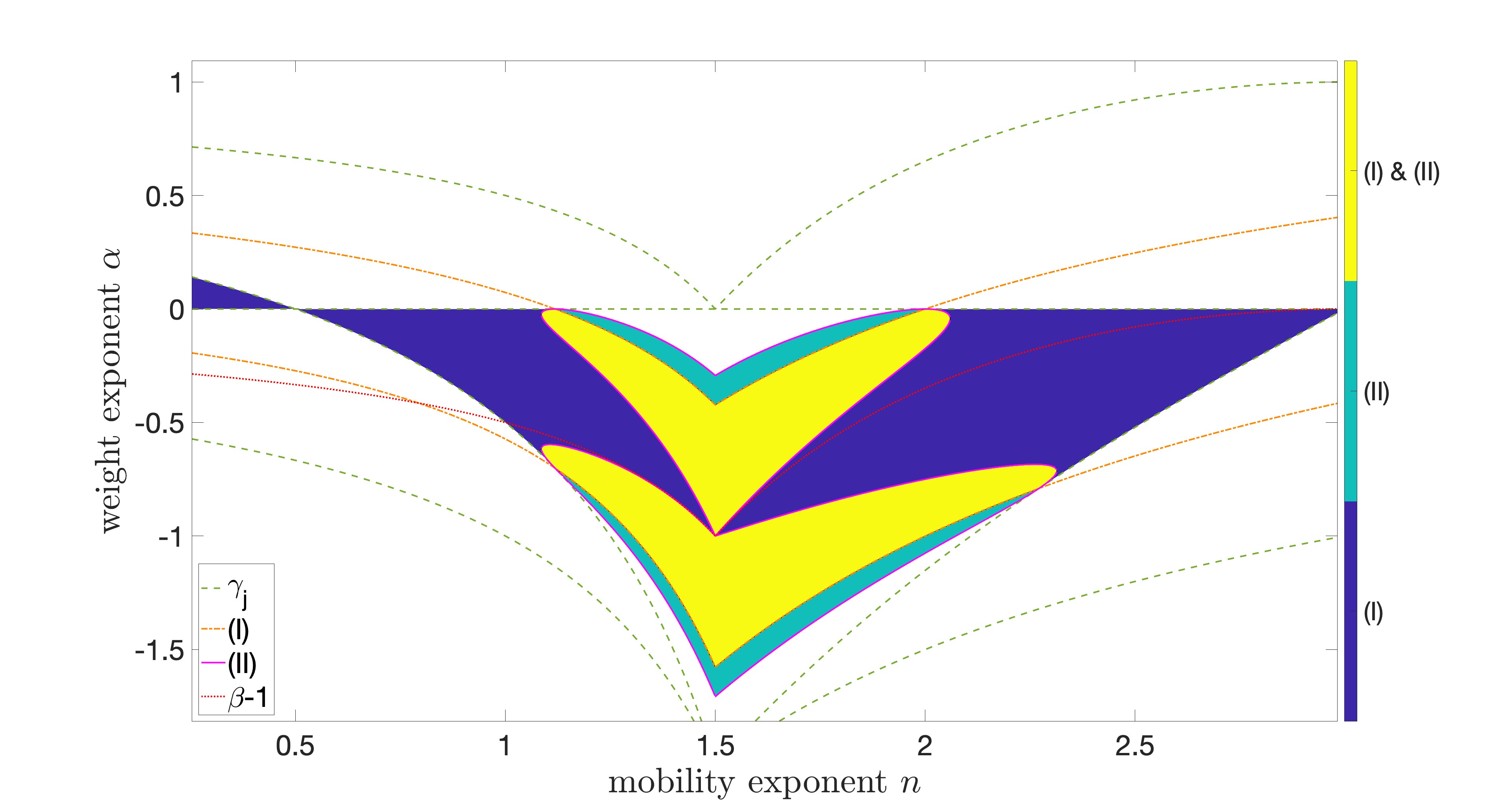}
    \caption{full coercivity range (color online). The areas and boundaries in the $n$-$\alpha$ plane are shown for which \ref{it:cond_i} or \ref{it:cond_ii} are valid and thus $\mathfrak p(D) = \mathfrak p_n(D)$ (given by \eqref{eq_def_p(D)} and \eqref{eq_zeros_12}) is coercive.}
    \label{fig:coercivity-full}
\end{figure}
We used the software \emph{Matlab} to generate the plot and evaluated the conditions \ref{it:cond_i} and \ref{it:cond_ii}. We have used the exact algebraic expressions \eqref{coerc_range} and \eqref{alp_cond2} to plot the contours and for $\frac 3 2 < n < 3$ we have solved for the roots $\alpha$ of $b = a^2$ numerically, cf.~\ref{it:cond_ii}. This verifies the enlarged coercivity range at least for $n < \frac 3 2$. We can deduce the following:
\begin{enumerate}
\item Since the criterion~\ref{it:cond_ii} cannot be satisfied for $n < \frac 5 2 - \sqrt 2 > 1$, this criterion does not lead to an improved range of validity of Theorem~\ref{Main_thm} to a larger interval for $n$ since the condition $\beta - 1$ being in the coercivity range of $\mathfrak p(D)$ requires $n > 1$ by \eqref{eq_zeros_1} and \eqref{coerc_range_n132}. However, again by \eqref{coerc_range_n132} we anticipate that we can prove well-posedness of \eqref{eq_nonlin_cauchy} for $\frac 1 2  < n \le 1$ by omitting all terms with time weights in \eqref{triple_bar_norms}, i.e., well-posedness without a regularization proof at the free boundary.
\item In Figure~\ref{fig:coercivity-full}, we recognize that we have coercivity for $\alpha$ in an interval containing $(-1,0)$ (the exact interval is $[-1,0)$ for $n = 2$, see \eqref{coerc_range_n323}) also for a range of values $n \in [2,2+\nu)$ with $\nu > 0$. Indeed, for $n = 2$ and $\alpha = -1$ we have $a = 0$ while $b = \frac 1 8 > 0$, so that indeed \ref{it:cond_ii} is valid in a neighborhood around $(n,\alpha) = (2,-1)$. By \eqref{coerc_range_n323} and \ref{it:cond_ii} we further infer that the upper boundary for coercivity is $\alpha = 0$ for $n \in [2,3)$. This indicates that the regularity result of \cite{G2016} presumably extends to $n \in [2,2+\nu)$.
\end{enumerate}
We also remark that conditions analogous to those provided in \ref{it:cond_i} and \ref{it:cond_ii} may be deduced at least for sixth- and eighth-order equations. However, explicit characterizations in terms of radicals such as in Lemma~\ref{lem_coercive} will for higher-order equations be more complicated and might in general not be feasible as the Galois groups of the corresponding polynomial equations might not be solvable for $m > 4$.

\bibliography{gnann_wisse_general_mobility_v3}
\bibliographystyle{plain}

\end{document}